\documentclass[10pt, reqno]{amsart}

\linespread{1.1}

\usepackage[usenames,dvipsnames]{pstricks}
\usepackage{epsfig}

\usepackage{color}
\setcounter{section}{0} \setlength{\oddsidemargin}{0.25cm}
\setlength{\evensidemargin}{0.25cm} \setlength{\textwidth}{125mm}
\setlength{\textheight}{195mm} \setlength{\topmargin}{0.2cm}

\date{\today}

\usepackage[latin1]{inputenc}
\usepackage[T1]{fontenc}
\usepackage{amsmath, amsthm}
\usepackage[english]{babel}
\usepackage{amssymb}
\usepackage{latexsym}
\usepackage{graphicx}
\usepackage[all]{xy}

\setcounter{section}{0} \setlength{\oddsidemargin}{0.25cm}
\setlength{\evensidemargin}{0.25cm} \setlength{\textwidth}{16cm}
\setlength{\textheight}{22cm} \setlength{\topmargin}{0.1cm}
\newtheorem{apart}{}[section]
\newtheorem{teo}[apart]{{\bf Theorem}}
\newtheorem{prop}[apart]{{\bf Proposition}}
\newtheorem{lem}[apart]{{\bf Lemma}}
\newtheorem{ej}[apart]{{\bf Example}}
\newtheorem{ejs}[apart]{{\bf Examples}}
\newtheorem{rem}[apart]{{\bf Remark}}

\newtheorem{defin}[apart]{{\bf Definition}}

\newcommand{\ot}{\otimes}
\newcommand{\co}{\circ}

\begin{document}

\begin{center}
 {\huge{\bf The group of strong Galois objects associated 

 \vspace{0.2cm}
to a cocommutative Hopf quasigroup}}
\end{center}

\ \\

\begin{center}
{\bf J.N. Alonso \'Alvarez$^{1}$, J.M. Fern\'andez Vilaboa$^{2}$ and
R. Gonz\'{a}lez Rodr\'{\i}guez$^{3}$}

\end{center}

\ \\
\hspace{-0,25cm}$^{1}$ Departamento de Matem\'{a}ticas, Universidad
de Vigo, Campus Universitario Lagoas-Marcosende, E-36280 Vigo, Spain
(e-mail: jnalonso@uvigo.es)
\ \\
\hspace{-0,25cm}$^{2}$ Departamento de \'Alxebra, Universidad de
Santiago de Compostela,  E-15771 Santiago de Compostela, Spain
(e-mail: josemanuel.fernandez@usc.es)
\ \\
\hspace{-0,25cm}$^{3}$ Departamento de Matem\'{a}tica Aplicada II,
Universidad de Vigo, Campus Universitario Lagoas-Mar\-co\-sen\-de, E-36310
Vigo, Spain (e-mail: rgon@dma.uvigo.es)
\ \\

\begin{center}
{\bf Abstract}
\end{center}
{\small $\hspace{0.45cm}$   Let $H$ be a cocommutative  faithfully flat Hopf quasigroup in a strict symmetric 
monoidal category with equalizers. In this paper we introduce the notion of (strong) Galois $H$-object and we prove that the set of isomorphism classes  of (strong) Galois $H$-objects is a (group) monoid  which coincides, in the Hopf algebra setting, 
with the Galois group of $H$-Galois objects introduced by Chase and Sweedler.}

{\small }

\vspace{0.5cm}

{\bf MSC 2010:} 18D10; 17A01; 16T05; 81R50, 20N05.

{\bf Keywords:} Monoidal category,  unital magma, Hopf quasigroup,
(strong) Galois $H$-object, Galois group, normal basis.

\section*{Introduction}  Let $R$ be a commutative ring with unit. The notion of Galois $H$-object for a commutative, cocommutative Hopf $R$-algebra $H$, which is a finitely generated projective $R$-module, is due to Chase and Sweedler \cite{Chase-Sweedler}. As was pointed by Beattie \cite{Beattie}, although the discussion of Galois $H$-objects  in \cite{Chase-Sweedler} is limited to commutative algebras, the main properties  can be easily extended to non commutative algebras.  One of more relevant is the following: if $H$ is cocommutative,  the isomorphism classes of Galois $H$-objects form a group denoted by $Gal(R,H)$.  The product  in $Gal(R,H)$ is defined by the kernel of a suitable morphism and the class of $H$ is the identity element. This construction  can be extended to symmetric closed categories with equalizers and coequalizers working with monoids instead of algebras and some of the more important properties and exact sequences involving the group  $Gal(R,H)$ were obtained in this  categorical setting (\cite{Pura}, \cite{Manel}, \cite{Galois}). 

An  interesting 
generalization of Hopf algebras  are Hopf quasigroups 
introduced by Klim and Majid in \cite{Majidesfera} in order to
understand the structure and relevant properties of the algebraic
$7$-sphere. They are not
associative but the lack of this property is compensated by some
axioms involving the antipode. The concept of Hopf quasigroup is a
particular instance of the notion of unital coassociative
$H$-bialgebra introduced in \cite{PI2} and includes the example
of an enveloping algebra $U(L)$ of a Malcev algebra (see
\cite{Majidesfera}) as well as the notion of quasigroup algebra $RL$
of an I.P. loop $L$. Then, quasigroups unify I.P. loops and Malcev
algebras in the same way that Hopf algebras unified groups and Lie
algebras. 

In this paper we are interested to  answer the following question:  is it possible to extend the construction of $Gal(R,H)$  to the situation where $H$ is a cocommutative Hopf quasigroup? in other words,  can we construct in a non-associative setting  a group of Galois $H$-objects? The main obstacle to define the group is the lack of associativity because we must  work with 
unital magmas, i.e. objects where there exists a non-associative product with unit. As we can see in the first section of this paper,  Hopf quasi groups are examples of these algebraic structures.  

The paper is organized as follows. We begin  introducing the notion of right $H$-comodule magma, where $H$ is a Hopf quasigroup, and defining the product  of right $H$-comodule magmas. In the second section we introduce the notions of Galois $H$-object and strong Galois $H$-objects proving  that, with the product defined in the first section for comodule magmas, the set of isomorphism classes forms a monoid, in the case of  Galois $H$-objects,  and a group when we work with strong Galois $H$-objects. In this point it appears the main difference between our Galois $H$-objects  and the ones associated to a Hopf algebra because in the Hopf algebra setting the inverse of  the class of a Galois $H$-object $A$ is the class of the opposite Galois $H$-object $A^{op}$, while in the quasigroup context this property fails. We only have the following: the product of $A$ and $A^{op}$ is isomorphic to $H$ only as comodules. To obtain an isomorphism of magmas we need to work with strong Galois $H$-objects. Then, the strong condition appears in a natural way and we want to point out that in the classical case of Galois $H$-objects associated to a Hopf algebra $H$ all of them are strong.  Finally, in the last section, we study the connections between Galois $H$-objects and invertible comodules with geometric normal basis.

 Throughout this paper $\mathcal C$ denotes a
strict  symmetric monoidal category with equalizers where $\ot$
denotes the tensor product, $K$  the unit object and $c$ the
symmetry isomorphism. We denote the class of objects of ${\mathcal
C}$ by $\vert {\mathcal C} \vert $ and for each object $M\in \vert
{\mathcal C}\vert$, the identity morphism by $id_{M}:M\rightarrow
M$. For simplicity of notation, given objects $M$, $N$ and $P$ in
${\mathcal C}$ and a morphism $f:M\rightarrow N$, we write $P\otimes
f$ for $id_{P}\otimes f$ and $f \otimes P$ for $f\otimes id_{P}$. We
will say that $A \in \vert {\mathcal C}\vert$ is flat if the functor
$A\ot -:{\mathcal C}\rightarrow {\mathcal C}$ preserves equalizers.
If  moreover $A\ot -$  reflects isomorphisms we say
that $A$ is faithfully flat.

By a unital  magma in ${\mathcal C}$ we understand a triple
$A=(A, \eta_{A}, \mu_{A})$ where $A$ is an object in ${\mathcal C}$
and $\eta_{A}:K\rightarrow A$ (unit), $\mu_{A}:A\otimes A
\rightarrow A$ (product) are morphisms in ${\mathcal C}$ such that
$\mu_{A}\circ (A\otimes \eta_{A})=id_{A}=\mu_{A}\circ
(\eta_{A}\otimes A)$. If $\mu_{A}$ is associative, that is,
$\mu_{A}\circ (A\otimes \mu_{A})=\mu_{A}\circ (\mu_{A}\otimes A)$,
the unital magma will be called a monoid in ${\mathcal C}$.  For any unital
magma $A$ with $\overline{A}$ we will denote the opposite unital
magma $(A, \eta_{\overline{A}}=\eta_{A},
\mu_{\overline{A}}=\mu_{A}\co c_{A,A})$. Given two unital magmas
(monoids) $A= (A, \eta_{A}, \mu_{A})$ and $B=(B, \eta_{B},
\mu_{B})$, $f:A\rightarrow B$ is a morphism of unital magmas (monoids) 
if $\mu_{B}\circ (f\otimes f)=f\circ \mu_{A}$ and $
f\circ \eta_{A}= \eta_{B}$. By duality, a counital comagma in
${\mathcal C}$ is a triple ${D} = (D, \varepsilon_{D},
\delta_{D})$ where $D$ is an object in ${\mathcal C}$ and
$\varepsilon_{D}: D\rightarrow K$ (counit), $\delta_{D}:D\rightarrow
D\otimes D$ (coproduct) are morphisms in ${\mathcal C}$ such that
$(\varepsilon_{D}\otimes D)\circ \delta_{D}= id_{D}=(D\otimes
\varepsilon_{D})\circ \delta_{D}$. If $\delta_{D}$ is coassociative,
that is, $(\delta_{D}\otimes D)\circ \delta_{D}=
 (D\otimes \delta_{D})\circ \delta_{D}$, the counital comagma will be called a comonoid.
 If ${D} = (D, \varepsilon_{D},
 \delta_{D})$ and
${ E} = (E, \varepsilon_{E}, \delta_{E})$ are counital comagmas
(comonoids), $f:D\rightarrow E$ is  morphism of counital magmas
(comonoids) if $(f\otimes f)\circ \delta_{D}
=\delta_{E}\circ f$ and  $\varepsilon_{E}\circ f =\varepsilon_{D}.$

Finally note that if  $A$, $B$ are unital magmas (monoids) in
${\mathcal C}$, the object $A\otimes B$ is a unital  magma (monoid)
in ${\mathcal C}$ where $\eta_{A\otimes B}=\eta_{A}\otimes \eta_{B}$
and $\mu_{A\otimes B}=(\mu_{A}\otimes \mu_{B})\circ (A\otimes
c_{B,A}\otimes B).$ With $A^{e}$ we will denote the unital magma
$\overline{A}\ot A$. In a dual way, if $D$, $E$ are counital
comagmas (comonoids) in ${\mathcal C}$, $D\otimes E$ is a  counital
comagma (comonoid) in ${\mathcal C}$ where $\varepsilon_{D\otimes
E}=\varepsilon_{D}\otimes \varepsilon_{E}$ and $\delta_{D\otimes
E}=(D\otimes c_{D,E}\otimes E)\circ( \delta_{D}\otimes \delta_{E}).$

\section{ Comodule magmas for Hopf quasigroups}

This first section is devoted to the study of the notion  of
$H$-comodule magma associated to a Hopf quasigroup $H$. We will show
that, as in the Hopf algebra setting, it is possible to define a
product using suitable equalizers which induces a
monoidal structure in the category of flat $H$-comodule magmas.

The notion of Hopf quasigroup was introduced in \cite{Majidesfera}
and the following is its monoidal version.
\begin{defin}
\label{Hopf quasigroup} {\rm  A Hopf quasigroup $H$   in ${\mathcal
C}$ is a unital magma $(H, \eta_H, \mu_H)$ and a comonoid $(H,
\varepsilon_H, \delta_H)$ such that the following axioms hold:
\begin{itemize}
\item[(a1)] $\varepsilon_H$ and $\delta_H$ are  morphisms of unital magmas.

\item[(a2)] There exists  $\lambda_{H}:H\rightarrow H$
in ${\mathcal C}$ (called the antipode of $H$) such that:

\begin{itemize}
\item[(a2-1)] $\mu_H\circ (\lambda_H\ot \mu_H)\circ (\delta_H\ot H)=
\varepsilon_H\ot H= \mu_H\circ (H\ot \mu_H)\circ (H\ot \lambda_H\ot
H)\circ (\delta_H\ot H).$

\item[(a2-2)] $\mu_H\circ (\mu_H\ot H)\circ (H\ot \lambda_H\ot H)\circ (H\ot \delta_H)=
H\ot \varepsilon_H= \mu_H\circ(\mu_H\ot \lambda_H)\circ (H\ot
\delta_H).$
\end{itemize}
\end{itemize}

If $H$ is a Hopf quasigroup, the antipode  is unique,
antimultiplicative, anticomultiplicative and leaves the unit and the
counit invariable:
\begin{equation}
\label{anti} \lambda_{H}\circ \mu_{H}=\mu_{H}\circ
(\lambda_{H}\otimes \lambda_{H})\circ
c_{H,H},\;\;\;\;\delta_{H}\circ \lambda_{H}=c_{H,H}\circ
(\lambda_{H}\otimes \lambda_{H})\circ \delta_{H},
\end{equation}
\begin{equation}
\label{fixed} \lambda_{H}\circ
\eta_{H}=\eta_{H},\;\;\;\;\varepsilon_{H}\circ
\lambda_{H}=\varepsilon_{H}.
\end{equation}
(\cite{Majidesfera}, Proposition 4.2 and \cite{EmilioPura},
Proposition 1). Note that by (a2),
\begin{equation}
\label{antipode}\mu_{H}\co (\lambda_{H}\ot id_{H})\co \delta_{H}=\mu_{H}\co (id_{H}\otimes \lambda_{H})\co \delta_{H}= \varepsilon_{H}\otimes \eta_{H}.
\end{equation}

A Hopf quasigroup $H$ is cocommutative if $c_{H,H}\co
\delta_{H}=\delta_{H}$. In this case, as in the Hopf algebra
setting, we have that $\lambda_{H}\co \lambda_{H}=id_{H}$ (see
Proposition 4.3 of \cite{Majidesfera}).

Let  $H$ and $B$ be Hopf quasigroups. We say that $f:H\rightarrow B$
is a morphism of Hopf quasigroups if it is a morphism of unital
magmas and  comonoids. In this case $\lambda_B\circ f=f\circ
\lambda_H$ (see Proposition 1.5 of \cite{NikaRamon5}). }
\end{defin}

\begin{ejs}
{\rm The notion of Hopf quasigroup was introduced in \cite{Majidesfera}
and it can be interpreted as the linearization of the concept of
quasigroup. A quasigroup is a set $Q$ together with a product such
that for any two elements $u, v \in Q$ the equations $u  x = v$, $x
u = v$ and $u v = x$ have  unique solutions in $Q$. A quasigroup $L$
which contains an element $e_{L}$ such that $ue_{L} = u = e_{L}  u $
for every $u \in L$ is called a loop. A loop $L$ is said to be a
loop with the inverse property  (for brevity an I.P. loop) if and
only if, to every element $u\in L$, there corresponds an element
$u^{-1}\in L$ such that the equations $u^{-1}(uv)=v=(vu)u^{-1}$ hold
for  every $v\in L$.

If $L$ is an I.P. loop, it is easy to show (see \cite{Bruck}) that
for all $u\in L$ the element $u^{-1}$ is unique and
$u^{-1}u=e_{L}=uu^{-1}$. Moreover, for all $u$, $v\in L$, the
equality $(uv)^{-1}=v^{-1}u^{-1}$ holds.

Let $R$ be a commutative ring and $L$ and I.P. loop. Then, by
Proposition 4.7 of \cite{Majidesfera}, we know that
$$RL=\bigoplus_{u\in L}Ru$$
is a cocommutative Hopf quasigroup with  product given by the
linear extension of the one defined in $L$ and
$$\delta_{RL}(u)=u\ot u, \;\; \varepsilon_{RL}(u)=1_{R}, \;\;
\lambda_{RL}(u)=u^{-1}$$ on the basis elements.

 Now we briefly describe another example of Hopf quasigroup constructed working with Malcev algebras (see \cite{PIS} for details). Consider a commutative and associative ring  $K$ with $\frac{1}{2}$ and $\frac{1}{3}$ in $K$. A Malcev algebra  $(M,[\;,\;])$ over $K$ is a free module in  $K-Mod$ with a bilinear anticommutative
operation [\;,\;] on $M$ satisfying that $[J(a, b, c), a] = J(a, b, [a, c]),$  where $J(a, b, c) = [[a, b], c] - [[a, c], b] - [a, [b, c]]$ is the Jacobian in $a, b, c$. Denote by $U(M)$ the not necessarily associative algebra  defined as the quotient of $K\{M\}$, the free non-associative algebra on a basis of $M$, by the ideal $I(M)$ generated by the set $\{ab-ba-[a,b], (a,x,y)+(x,a,y), (x,a,y)+(x,y,a) :   a, b\in M, x,y \in K\{M\} \},$
where $(x,y,z)=(xy)z-x(yz)$ is the usual additive associator.

By  Proposition 4.1 of \cite{PIS} and Proposition 4.8 of \cite{Majidesfera}, the diagonal map $\delta_{U(M)}:U(M)\to U(M)\ot U(M)$ defined by $\delta_{U(M)} (x)=1\ot x +x\ot 1$  for all $x\in M,$ and the map $\varepsilon_{U(M)}:U(M)\to K$ defined by   $\varepsilon_{U(M)}(x)=0$   for all $x\in M$, both  extended to $U(M)$ as morphisms of unital magmas; together with the map  $\lambda_{U(M)}:U(M)\to U(M)$, defined by $\lambda_{U(M)}(x)=-x$ for all $x\in M$ and extended to $U(M)$ as an antimultiplicative  morphism, provide a cocommutative Hopf quasigroup structure on  $U(M)$.
}
\end{ejs}

\begin{defin}
\label{H-comodule-magma}
{\rm
Let $H$ be a Hopf quasigroup and let $A$ be a unital  magma (monoid) with a
right coaction $\rho_{A}:A\rightarrow A\ot H$. We will say that
${\Bbb A}=(A,\rho_{A})$ is a right $H$-comodule magma (monoid) if $(A,\rho_{A})$ is
a right $H$-comodule (i.e. $(\rho_{A}\otimes H)\circ
\rho_{A}=(A\otimes \delta_{H})\circ \rho_{A}$, $(A\otimes
\varepsilon_{H})\circ \rho_{A}=id_{A}$), and the following
identities
\begin{itemize}
\item[(b1)] $\rho_{A}\co \eta_{A}=\eta_{A}\ot \eta_{H},$
\item[(b2)] $\rho_{A}\co \mu_{A}=\mu_{A\ot H}\co  (\rho_{A}\ot \rho_{A}),$
\end{itemize}
hold.

Obviously, if $H$ is a Hopf quasigroup, the pair  ${\Bbb H}=(H,\delta_{H})$ is an example of right $H$-comodule magma.

Let ${\Bbb A}$, ${\Bbb B}$ be right $H$-comodule magmas (monoids). A morphism of right $H$-comodule magmas (monoids) $f:{\Bbb A}\rightarrow {\Bbb B}$ is a morphism $f:A\rightarrow B$ in ${\mathcal C}$  of unital magmas (monoids) and  right $H$-comodules, that is $(f\ot H)\co\rho_{A}=\rho_{B}\co f$.
}
\end{defin}

\begin{rem}
\label{isomorphism}
{\rm 
  Note that, if $H$ is cocommutative, every endomorphism $\alpha:{\Bbb H}\rightarrow {\Bbb H}$ of right $H$-comodule magmas is an isomorphism. Indeed: First note that by the comodule condition an the cocommutativity of $H$ we have $\alpha=((\varepsilon_{H}\co \alpha)\ot H)\co \delta_{H}=  (H\ot (\varepsilon_{H}\co \alpha))\co \delta_{H}$ and then $\alpha^{\prime}=(H\ot (\varepsilon_{H}\co \alpha\co \lambda_{H}))\co \delta_{H}$ is the inverse of $\alpha$ because by the properties of $H$:
$$\alpha^{\prime}\co\alpha=\alpha\co\alpha^{\prime}=(H\ot (((\varepsilon_{H}\co \alpha)\ot (\varepsilon_{H}\co \alpha\co \lambda_{H}))\co \delta_{H}))\co \delta_{H}$$
$$=(H\ot (\varepsilon_{H}\co \alpha\co \mu_{H}\co (H\ot \lambda_{H})\co \delta_{H}))\co \delta_{H}=id_{H}.$$

}
\end{rem}

\begin{prop}
\label{tensor-product}  Let $H$ be a Hopf quasigroup and  ${\Bbb A}$, ${\Bbb B}$
right $H$-comodule magmas. The pairs ${\Bbb A}\ot_{1} {\Bbb B}=(A\ot B, \rho_{A\otimes B}^1=(A\ot c_{H,B})\co (\rho_{A}\ot B))$, ${\Bbb A}\ot_{2} {\Bbb B}=(A\ot B, \rho_{A\otimes B}^2=A\ot \rho_{B})$ are right $H$-comodule magmas. Moreover ${\Bbb A}\ot_{1} {\Bbb B}$ and ${\Bbb B}\ot_{2} {\Bbb A}$ are isomorphic right $H$-comodule magmas.
\end{prop}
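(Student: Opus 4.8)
The plan is to verify, for each of $\mathbb{A}\ot_{1}\mathbb{B}$ and $\mathbb{A}\ot_{2}\mathbb{B}$, the two families of axioms involved in being a right $H$-comodule magma (the comodule identities and conditions (b1), (b2)), and then to show that the symmetry isomorphism $c_{A,B}\colon A\ot B\to B\ot A$ of $\mathcal C$ furnishes the isomorphism claimed in the last assertion.

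First I would treat $\mathbb{A}\ot_{2}\mathbb{B}$, which is the easy case because $\rho^{2}_{A\ot B}=A\ot\rho_{B}$ only affects the second tensor factor. The comodule identities $(\rho^{2}_{A\ot B}\ot H)\co\rho^{2}_{A\ot B}=(A\ot B\ot\delta_{H})\co\rho^{2}_{A\ot B}$ and $(A\ot B\ot\varepsilon_{H})\co\rho^{2}_{A\ot B}=id_{A\ot B}$ reduce immediately, upon tensoring with $id_{A}$ on the left, to the comodule identities for $(B,\rho_{B})$. Axiom (b1) follows from (b1) for $\mathbb{B}$, since $\rho^{2}_{A\ot B}\co(\eta_{A}\ot\eta_{B})=\eta_{A}\ot(\rho_{B}\co\eta_{B})=\eta_{A}\ot\eta_{B}\ot\eta_{H}$. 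The one step that requires care is (b2): one must expand $\mu_{A\ot B}=(\mu_{A}\ot\mu_{B})\co(A\ot c_{B,A}\ot B)$ and the magma structure $\mu_{(A\ot B)\ot H}$, slide the coaction $\rho_{B}$ past the various instances of the symmetry using naturality of $c$ and the hexagon coherence, and then invoke (b2) for $\mathbb{B}$, namely $\rho_{B}\co\mu_{B}=\mu_{B\ot H}\co(\rho_{B}\ot\rho_{B})$. This is a routine but slightly lengthy string-diagram manipulation, and it is where the bulk of the bookkeeping lies.

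For $\mathbb{A}\ot_{1}\mathbb{B}$ I would avoid repeating the argument and instead transport structure along $c_{A,B}$. Naturality of the symmetry gives $c_{A\ot H,B}\co(\rho_{A}\ot B)=(B\ot\rho_{A})\co c_{A,B}$, while the hexagon axiom gives $c_{A\ot H,B}=(c_{A,B}\ot H)\co(A\ot c_{H,B})$; combining these two facts yields
\[
(c_{A,B}\ot H)\co\rho^{1}_{A\ot B}=\rho^{2}_{B\ot A}\co c_{A,B}.
\]
Moreover, in any symmetric monoidal category $c_{A,B}$ is an isomorphism of unital magmas from $A\ot B$ to $B\ot A$ (with their tensor-product magma structures), with inverse $c_{B,A}$; this uses only naturality and the braid relations and not associativity, so it is valid for magmas. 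Since, by the first part applied with the roles of the two factors interchanged, $\mathbb{B}\ot_{2}\mathbb{A}=(B\ot A,\,B\ot\rho_{A})$ is a right $H$-comodule magma, pulling its axioms back along the magma isomorphism $c_{A,B}$, which by the displayed identity intertwines the coactions, shows simultaneously that $\mathbb{A}\ot_{1}\mathbb{B}$ is a right $H$-comodule magma and that $c_{A,B}\colon\mathbb{A}\ot_{1}\mathbb{B}\to\mathbb{B}\ot_{2}\mathbb{A}$ is an isomorphism of right $H$-comodule magmas. (Alternatively, one may verify the axioms for $\mathbb{A}\ot_{1}\mathbb{B}$ directly, mirroring the computation in the previous paragraph with the braiding $c_{H,B}$ inserted and using the axioms of $\mathbb{A}$ in place of those of $\mathbb{B}$.)

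The only real obstacle is organizational: keeping track of the several occurrences of the symmetry $c$ in the magma structure on a threefold tensor product, and applying the hexagon identity and naturality in the correct order when checking (b2). No idea beyond the definitions and the coherence theorem for symmetric monoidal categories is needed; in particular, cocommutativity of $H$ and the antipode axioms play no role here.
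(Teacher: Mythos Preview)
Your proposal is correct and follows essentially the same approach as the paper: both rely on naturality of the symmetry $c$ to verify the comodule-magma axioms directly, and both identify $c_{A,B}$ as the isomorphism ${\Bbb A}\ot_{1}{\Bbb B}\to{\Bbb B}\ot_{2}{\Bbb A}$. The paper verifies the $\ot_{1}$-case directly and leaves $\ot_{2}$ to the reader, whereas you do the reverse and then transport structure along $c_{A,B}$ to obtain both the $\ot_{1}$-case and the isomorphism simultaneously---a slightly more economical organization, but not a different idea.
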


\begin{proof} We give the proof only for ${\Bbb A}\ot_{1} {\Bbb B}$. The calculus for ${\Bbb A}\ot_{2} {\Bbb B}$ are analogous and we left to  the reader. First note that the object $A\ot B$ is a unital  magma in ${\mathcal C}$. On the other hand, the pair $(A\ot B, \rho_{A\ot B}^1)$ is a right $H$-comodule because trivially $(A\ot B\ot \varepsilon_{H})\co\rho_{A\ot B}^1=id_{A\ot B}$ and using the naturality of $c$ we obtain that $(\rho_{A\ot B}^1\otimes H)\circ \rho_{A\ot B}^1=(A\otimes \delta_{H})\circ \rho_{A\ot B}^1$. Moreover, $\rho_{A\ot B}^1\co \eta_{A\ot B}=\eta_{A\ot B}\ot \eta_{H}$ and also by the naturality of $c$ we have $\rho_{A\ot B}^1\co \mu_{A\ot B}=(\mu_{A\ot B}\ot \mu_{H})\co 
(A\ot B\ot c_{H,A\ot B}\ot H)\co (\rho_{A\ot B}^1\ot \rho_{A\ot B}^1).$
Finally, $c_{A,B}$ is an isomorphism of right $H$-comodule magmas between ${\Bbb A}\ot_{1} {\Bbb B}$ and ${\Bbb B}\ot_{2} {\Bbb A}$ because by the naturally of $c$ we obtain that $c_{A,B}\co \eta_{A\ot B}=\eta_{B\ot A}$, $\mu_{B\ot A}\co ( c_{A,B}\ot c_{A,B})=c_{A,B}\co \mu_{A\ot B}$ and  $\rho_{B\ot A}^2\co c_{A,B}=(c_{A,B}\ot H)\co \rho_{A\ot B}^1$.

\end{proof}

\begin{prop}
\label{op-magma} Let $H$ be a cocommutative Hopf quasigroup and
${\Bbb A}$ a right $H$-comodule magma. Then
$\overline{{\Bbb A}}=(\overline{A}, \rho_{\overline{A}}=(A\ot
\lambda_{H})\co \rho_{A})$ is a right $H$-comodule magma.
\end{prop}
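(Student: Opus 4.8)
The plan is to verify directly the axioms defining a right $H$-comodule magma for the pair $\overline{{\Bbb A}}=(\overline{A},\rho_{\overline{A}})$, using only the naturality of $c$, the cocommutativity of $H$, the antipode identities \eqref{anti} and \eqref{fixed}, and the hypothesis that ${\Bbb A}$ is a right $H$-comodule magma. Since $\overline{A}$ is by construction already a unital magma, what remains is to show that $(\overline{A},\rho_{\overline{A}})$ is a right $H$-comodule and that it satisfies (b1) and (b2).

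I would first dispose of the routine conditions. The counit condition $(A\ot\varepsilon_{H})\co\rho_{\overline{A}}=id_{A}$ is immediate from $\varepsilon_{H}\co\lambda_{H}=\varepsilon_{H}$ in \eqref{fixed} together with $(A\ot\varepsilon_{H})\co\rho_{A}=id_{A}$. For the comodule coassociativity, expanding $(\rho_{\overline{A}}\ot H)\co\rho_{\overline{A}}$ and moving the two copies of $\lambda_{H}$ through the comodule coassociativity of ${\Bbb A}$ rewrites the left-hand side as $(A\ot((\lambda_{H}\ot\lambda_{H})\co\delta_{H}))\co\rho_{A}$, while $(A\ot\delta_{H})\co\rho_{\overline{A}}=(A\ot(\delta_{H}\co\lambda_{H}))\co\rho_{A}$; so the claim reduces to the identity $\delta_{H}\co\lambda_{H}=(\lambda_{H}\ot\lambda_{H})\co\delta_{H}$, which follows from the anticomultiplicativity of $\lambda_{H}$ in \eqref{anti} together with the cocommutativity of $H$ and the naturality of $c$, since $c_{H,H}\co(\lambda_{H}\ot\lambda_{H})\co\delta_{H}=(\lambda_{H}\ot\lambda_{H})\co c_{H,H}\co\delta_{H}=(\lambda_{H}\ot\lambda_{H})\co\delta_{H}$. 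Condition (b1) is equally direct: $\rho_{\overline{A}}\co\eta_{\overline{A}}=(A\ot\lambda_{H})\co\rho_{A}\co\eta_{A}=(A\ot\lambda_{H})\co(\eta_{A}\ot\eta_{H})=\eta_{A}\ot\eta_{H}$, using again \eqref{fixed}.

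The substantial point, which I expect to be the main obstacle, is (b2). Here I would start from $\rho_{\overline{A}}\co\mu_{\overline{A}}=(A\ot\lambda_{H})\co\rho_{A}\co\mu_{A}\co c_{A,A}$, apply (b2) for ${\Bbb A}$ to rewrite $\rho_{A}\co\mu_{A}$ as $(\mu_{A}\ot\mu_{H})\co(A\ot c_{H,A}\ot H)\co(\rho_{A}\ot\rho_{A})$, and then invoke the antimultiplicativity of $\lambda_{H}$ from \eqref{anti}, $\lambda_{H}\co\mu_{H}=\mu_{H}\co(\lambda_{H}\ot\lambda_{H})\co c_{H,H}$, to rewrite the resulting $\lambda_{H}\co\mu_{H}$ on the $H$-component. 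The expression to be matched is the expansion of $\mu_{\overline{A}\ot H}\co(\rho_{\overline{A}}\ot\rho_{\overline{A}})$, where $\mu_{\overline{A}\ot H}=(\mu_{\overline{A}}\ot\mu_{H})\co(A\ot c_{H,A}\ot H)$ and $\mu_{\overline{A}}=\mu_{A}\co c_{A,A}$. Reconciling the two then amounts to keeping careful track of the three transpositions that occur — the $c_{A,A}$ coming from $\mu_{\overline{A}}$, the $c_{H,H}$ produced by the antimultiplicativity of $\lambda_{H}$, and the $c_{H,A}$ inside $\mu_{\overline{A}\ot H}$ — and checking, by repeated use of the naturality of $c$, that both composites agree; in Sweedler-type notation both take $a\ot a'$ to $\mu_{A}(a'_{(0)}\ot a_{(0)})\ot\mu_{H}(\lambda_{H}(a_{(1)})\ot\lambda_{H}(a'_{(1)}))$. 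This establishes (b2) and hence that $\overline{{\Bbb A}}$ is a right $H$-comodule magma.
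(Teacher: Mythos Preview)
Your proof is correct and follows exactly the same route as the paper's own argument: the paper verifies the counit condition trivially, derives comodule coassociativity from cocommutativity together with \eqref{anti}, obtains (b1) from (b1) for ${\Bbb A}$ and \eqref{fixed}, and deduces (b2) from (b2) for ${\Bbb A}$, \eqref{anti}, and the naturality of $c$. You have simply supplied the details that the paper leaves to the reader.
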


\begin{proof} Trivially $(A\otimes \varepsilon_{H})\co
\rho_{\overline{A}}=id_{A}$. Using that $H$ is cocommutative and
(\ref{anti}) we obtain $(\rho_{\overline{A}}\ot H)\co
\rho_{\overline{A}}=(A\ot \delta_{H})\co \rho_{\overline{A}}$.
Moreover by (b1) of Definition \ref{H-comodule-magma} and
(\ref{fixed}), the identity $\rho_{\overline{A}}\co
\eta_{A}=\eta_{A}\ot \eta_{H}$ holds. Finally, by the naturality of
$c$, (b2) of Definition \ref{H-comodule-magma} and (\ref{anti}) the
equality $\rho_{\overline{A}}\co
\mu_{\overline{A}}=\mu_{\overline{A}\ot H}\co
(\rho_{\overline{A}}\ot \rho_{\overline{A}})$ follows easily.

\end{proof}

\begin{prop}
\label{product} Let $H$ be a Hopf quasigroup and  ${\Bbb A}$, ${\Bbb B}$
right $H$-comodule magmas. The object $A\bullet B$ defined by the equalizer diagram
$$
\setlength{\unitlength}{3mm}
\begin{picture}(22,4)
\put(2,2){\vector(1,0){4}}
\put(12,2.5){\vector(1,0){6}}
\put(12,1.5){\vector(1,0){6}}
\put(0,2){\makebox(0,0){$A\bullet B$}}
\put(9,2){\makebox(0,0){$A\otimes B$}}
\put(22,2){\makebox(0,0){$A\otimes B\otimes H,$}}
\put(4,3){\makebox(0,0){$i_{A\bullet B}$}}
\put(15,4){\makebox(0,0){$\rho_{A\otimes B}^1$}}
\put(15,0.5){\makebox(0,0){$\rho_{A\otimes B}^2$}}
\end{picture}
$$
where $\rho_{A\otimes B}^1$ and $\rho_{A\otimes B}^2$ are the
morphisms defined in Proposition \ref{tensor-product}, is a unital
magma where $\eta_{A\bullet B}$ and $\mu_{A\bullet B}$ are the
factorizations through $i_{A\bullet B}$ of the morphisms $\eta_{A\ot
B}$ and $\mu_{A\ot B}\co (i_{A\bullet B}\ot i_{A\bullet B})$
respectively. Moreover, if $H$ is flat and the coaction
$\rho_{A\bullet B}:A\bullet B\rightarrow A\bullet B\ot H$ is the
factorization of $\rho_{A\otimes B}^2\co i_{A\bullet B}$ through
$i_{A\bullet B}\ot H$, the pair ${\Bbb A}\bullet {\Bbb B}=(A\bullet
B, \rho_{A\bullet B})$ is a right $H$-comodule magma.
\end{prop}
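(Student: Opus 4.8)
The plan is to reduce every assertion to a property of the two right $H$-comodule magmas $\Bbb A\ot_{1}\Bbb B=(A\ot B,\rho_{A\ot B}^1)$ and $\Bbb A\ot_{2}\Bbb B=(A\ot B,\rho_{A\ot B}^2)$ provided by Proposition \ref{tensor-product}, exploiting throughout that $i_{A\bullet B}$, being an equalizer, is a monomorphism and that $\rho_{A\ot B}^1\co i_{A\bullet B}=\rho_{A\ot B}^2\co i_{A\bullet B}$.

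For the unital magma structure I would argue as follows. Condition (b1) for $\Bbb A\ot_{j}\Bbb B$ gives $\rho_{A\ot B}^j\co \eta_{A\ot B}=\eta_{A\ot B}\ot \eta_H$ for $j=1,2$, so $\eta_{A\ot B}$ equalizes $\rho_{A\ot B}^1$ and $\rho_{A\ot B}^2$ and factors uniquely through $i_{A\bullet B}$, yielding $\eta_{A\bullet B}$. Next, writing $f=\mu_{A\ot B}\co (i_{A\bullet B}\ot i_{A\bullet B})$, condition (b2) for $\Bbb A\ot_{j}\Bbb B$ says $\rho_{A\ot B}^j\co \mu_{A\ot B}=\mu_{(A\ot B)\ot H}\co (\rho_{A\ot B}^j\ot \rho_{A\ot B}^j)$, with the \emph{same} magma structure on $(A\ot B)\ot H$ for $j=1$ and $j=2$ (it involves only $\mu_{A\ot B}$, $\mu_H$ and $c$, not the coaction); composing with $i_{A\bullet B}\ot i_{A\bullet B}$ and using $\rho_{A\ot B}^1\co i_{A\bullet B}=\rho_{A\ot B}^2\co i_{A\bullet B}$ gives $\rho_{A\ot B}^1\co f=\rho_{A\ot B}^2\co f$, so $f$ factors uniquely through $i_{A\bullet B}$, yielding $\mu_{A\bullet B}$. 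The unital magma axioms for $(A\bullet B,\eta_{A\bullet B},\mu_{A\bullet B})$ then follow by composing with the monomorphism $i_{A\bullet B}$ and using the corresponding identities in the unital magma $A\ot B$.

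For the comodule part, assume $H$ flat. Since $H\ot-$ and $-\ot H$ are naturally isomorphic in a symmetric monoidal category, $-\ot H$ preserves equalizers too, hence $i_{A\bullet B}\ot H$ is the equalizer of $\rho_{A\ot B}^1\ot H$ and $\rho_{A\ot B}^2\ot H$. Put $\beta=\rho_{A\ot B}^1\co i_{A\bullet B}=\rho_{A\ot B}^2\co i_{A\bullet B}$. Because each $(A\ot B,\rho_{A\ot B}^j)$ is an $H$-comodule, for $j=1,2$ we have $(\rho_{A\ot B}^j\ot H)\co \beta=(\rho_{A\ot B}^j\ot H)\co \rho_{A\ot B}^j\co i_{A\bullet B}=(A\ot B\ot \delta_H)\co \beta$; in particular $(\rho_{A\ot B}^1\ot H)\co \beta=(\rho_{A\ot B}^2\ot H)\co \beta$, so $\beta=\rho_{A\ot B}^2\co i_{A\bullet B}$ factors through $i_{A\bullet B}\ot H$, and that factorization is the asserted $\rho_{A\bullet B}$, determined by $(i_{A\bullet B}\ot H)\co \rho_{A\bullet B}=\beta$.

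It then remains to check that $\Bbb A\bullet \Bbb B=(A\bullet B,\rho_{A\bullet B})$ satisfies the comodule axioms together with (b1) and (b2). Since $H$, hence also $H\ot H$, is flat, the morphisms $i_{A\bullet B}$, $i_{A\bullet B}\ot H$ and $i_{A\bullet B}\ot H\ot H$ are monomorphisms, so it suffices to verify each identity after composing with the appropriate one of them; using the defining relations $i_{A\bullet B}\co \eta_{A\bullet B}=\eta_{A\ot B}$, $i_{A\bullet B}\co \mu_{A\bullet B}=\mu_{A\ot B}\co (i_{A\bullet B}\ot i_{A\bullet B})$, $(i_{A\bullet B}\ot H)\co \rho_{A\bullet B}=\rho_{A\ot B}^2\co i_{A\bullet B}$ and the naturality of $c$, each computation collapses to the corresponding identity for the $H$-comodule magma $\Bbb A\ot_{2}\Bbb B$, already known from Proposition \ref{tensor-product}. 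I expect the one genuinely delicate point to be the existence of the factorization defining $\rho_{A\bullet B}$: the coactions $\rho_{A\ot B}^1$ and $\rho_{A\ot B}^2$ are different, yet their common restriction $\beta$ equalizes $\rho_{A\ot B}^1\ot H$ and $\rho_{A\ot B}^2\ot H$ precisely because each $\rho_{A\ot B}^j$ is coassociative, which is exactly what removes the dependence on $j$; flatness of $H$ is used only to guarantee that $i_{A\bullet B}\ot H$ (and $i_{A\bullet B}\ot H\ot H$) remains an equalizer.
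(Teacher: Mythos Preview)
Your proposal is correct and follows essentially the same route as the paper's proof: both construct $\eta_{A\bullet B}$, $\mu_{A\bullet B}$ and $\rho_{A\bullet B}$ as factorizations through the equalizer $i_{A\bullet B}$ (respectively $i_{A\bullet B}\ot H$, using flatness of $H$), and then verify the axioms after composing with that monomorphism. The only cosmetic difference is that you systematically invoke Proposition~\ref{tensor-product} to say that $(A\ot B,\rho_{A\ot B}^j)$ is already an $H$-comodule magma for $j=1,2$, so that coassociativity of each $\rho_{A\ot B}^j$ gives $(\rho_{A\ot B}^j\ot H)\co\beta=(A\ot B\ot\delta_H)\co\beta$ directly, whereas the paper phrases the same step as ``by the properties of $\rho_A$ and $\rho_B$ and the naturality of $c$''; the underlying computation is identical.
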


\begin{proof} Trivially $\rho_{A\otimes B}^1\co \eta_{A\ot B}=\eta_{A}\ot \eta_{B}\ot \eta_{H}=\rho_{A\otimes B}^2\co \eta_{A\ot B}$. Therefore, there exists a unique  morphism $\eta_{A\bullet B}:K\rightarrow A\bullet B$ such that $i_{A\bullet B}\co \eta_{A\bullet B}=\eta_{A\ot B}$. On the  other hand, using the properties of $\rho_{A}$ and $\rho_{B}$ and the naturality of $c$ we have
\begin{itemize}
\item[ ]$\hspace{0.38cm}\rho_{A\otimes B}^1\co  \mu_{A\ot B}\co (i_{A\bullet B}\ot i_{A\bullet B})$
\item[ ]$= (\mu_{A\ot B}\ot \mu_{H})\co (A\ot B\ot c_{H,A\ot B}\ot H)\co ((\rho_{A\otimes B}^1\co i_{A\bullet B})\ot  (\rho_{A\otimes B}^1\co i_{A\bullet B}))$
\item[ ]$=(\mu_{A\ot B}\ot \mu_{H})\co (A\ot B\ot c_{H,A\ot B}\ot H)\co ((\rho_{A\otimes B}^2\co i_{A\bullet B})\ot  (\rho_{A\otimes B}^2\co i_{A\bullet B})) $
\item[ ]$=\rho_{A\otimes B}^2\co \mu_{A\ot B}\co (i_{A\bullet B}\ot i_{A\bullet B}).$
\end{itemize}
Then, there exists a unique morphism $\mu_{A\bullet B}:A\bullet B\ot A\bullet B\rightarrow A\bullet B$ such that $i_{A\bullet B}\co \mu_{A\bullet B}=\mu_{A\ot B}\co (i_{A\bullet B}\ot i_{A\bullet B}).$ Moreover,
$\mu_{A\bullet B}\co (\eta_{A\bullet B}\ot A\bullet B)=id_{A\bullet B}=\mu_{A\bullet B}\co (A\bullet B\ot \eta_{A\bullet B})$ because
$i_{A\bullet B}\co \mu_{A\bullet B}\co (\eta_{A\bullet B}\ot A\bullet B)=i_{A\bullet B}= i_{A\bullet B}\co \mu_{A\bullet B}\co (A\bullet B\ot \eta_{A\bullet B}).$ Therefore, $A\bullet B$ is a unital  magma.

Moreover,
$$
\setlength{\unitlength}{3mm}
\begin{picture}(30,4)
\put(4,2){\vector(1,0){5}}
\put(17,2.5){\vector(1,0){6}}
\put(17,1.5){\vector(1,0){6}}
\put(0,2){\makebox(0,0){$A\bullet  B\otimes H$}}
\put(13,2){\makebox(0,0){$A\otimes B\otimes H$}}
\put(28,2){\makebox(0,0){$A\otimes B \otimes H\otimes H$}}
\put(6,3){\makebox(0,0){$\scriptstyle i_{A\bullet  B}\otimes H$}}
\put(20,3.5){\makebox(0,0){$\scriptstyle\rho_{A\otimes B}^1\otimes H$}}
\put(20,0.5){\makebox(0,0){$\scriptstyle\rho_{A\otimes B}^2\otimes H$}}
\end{picture}
$$
is an equalizer diagram, because $-\ot H$ preserves equalizers, and by the the properties of $\rho_{A}$ and $\rho_{B}$ and the naturally of $c$ we obtain $(\rho_{A\otimes B}^1\ot H)\co \rho_{A\otimes B}^2\co i_{A\bullet B}=(\rho_{A\otimes B}^2\ot H)\co \rho_{A\otimes B}^2\co i_{A\bullet B}.$ As a consequence, there exists a unique morphism $\rho_{A\bullet B}:A\bullet B\rightarrow A\bullet B\ot H$ such that $(i_{A\bullet  B}\otimes H)\co \rho_{A\bullet B}=\rho_{A\otimes B}^2\co i_{A\bullet B}$. Then, the pair $(A\bullet B, \rho_{A\bullet B})$ is a right $H$-comodule because $(i_{A\bullet  B}\ot \varepsilon_{H})\co \rho_{A\bullet B}=i_{A\bullet  B}$ and also $(((i_{A\bullet  B}\ot H)\co \rho_{A\bullet B})\ot H)\co \rho_{A\bullet B}=(i_{A\bullet  B}\ot \delta_{H})\co \rho_{A\bullet B}.$ Finally,  (b1) and (b2) of Definition \ref{H-comodule-magma} follow, by a similar reasoning, from $(i_{A\bullet  B}\ot H)\co \rho_{A\bullet B}\co \eta_{A\bullet B}=
(i_{A\bullet  B}\ot H)\co (\eta_{A\bullet B}\ot \eta_{H})$ and  $(i_{A\bullet  B}\ot H)\co \rho_{A\bullet B}\co \mu_{A\bullet B}= (i_{A\bullet  B}\ot H)\co \mu_{A\bullet  B\ot H}\co (\rho_{A\bullet B}\ot \rho_{A\bullet B})$.
\end{proof}

\begin{prop}
\label{morphisms} Let $H$ be a flat Hopf quasigroup and $f:{\Bbb
A}\rightarrow {\Bbb B}$, $g:{\Bbb T}\rightarrow {\Bbb D}$ morphisms
of right $H$-comodule magmas. Then the morphism $f\bullet g:A\bullet
T\rightarrow B\bullet D$, obtained as the factorization of $(f\ot
g)\co i_{A\bullet T}:A\bullet T\rightarrow B\otimes D$ through the
equalizer $i_{B\bullet D}$, is a morphism of right $H$-comodule
magmas between ${\Bbb A}\bullet {\Bbb T}$ and ${\Bbb B}\bullet {\Bbb
D}$. Moreover, if $f$ and $g$ are isomorphisms, so is $f\bullet g$.
\end{prop}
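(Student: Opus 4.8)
The plan is to construct $f\bullet g$ as the asserted factorization through the equalizer $i_{B\bullet D}$, to verify that it is a morphism of unital magmas and of $H$-comodules by repeatedly cancelling monomorphisms, and finally to deduce the statement about isomorphisms from the functoriality of the operation $\bullet$.

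The main point, and essentially the only step with content, is to show that $(f\ot g)\co i_{A\bullet T}$ equalizes $\rho_{B\ot D}^{1}$ and $\rho_{B\ot D}^{2}$, which is exactly what is needed for the factorization $f\bullet g$ to exist and to be characterized by $i_{B\bullet D}\co (f\bullet g)=(f\ot g)\co i_{A\bullet T}$. I would obtain this from the pair of identities
$$\rho_{B\ot D}^{j}\co (f\ot g)=(f\ot g\ot H)\co \rho_{A\ot T}^{j},\qquad j=1,2,$$
which follow from the definitions of $\rho^{1}$ and $\rho^{2}$ in Proposition \ref{tensor-product}, from the hypothesis that $f$ and $g$ are morphisms of right $H$-comodules (so $\rho_{B}\co f=(f\ot H)\co \rho_{A}$ and $\rho_{D}\co g=(g\ot H)\co \rho_{T}$), and, in the case $j=1$, from the naturality of $c$ (used to slide $c_{H,D}\co (H\ot g)=(g\ot H)\co c_{H,T}$). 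Post-composing with $i_{A\bullet T}$ and invoking that $i_{A\bullet T}$ equalizes $\rho_{A\ot T}^{1}$ and $\rho_{A\ot T}^{2}$ then yields the required equality $\rho_{B\ot D}^{1}\co (f\ot g)\co i_{A\bullet T}=\rho_{B\ot D}^{2}\co (f\ot g)\co i_{A\bullet T}$.

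Since an equalizer is a monomorphism, $i_{B\bullet D}$ is mono, and because $H$ is flat the morphism $i_{B\bullet D}\ot H$ is again an equalizer (as observed in the proof of Proposition \ref{product}), hence mono as well. Therefore the equalities
$$(f\bullet g)\co \eta_{A\bullet T}=\eta_{B\bullet D},\qquad (f\bullet g)\co \mu_{A\bullet T}=\mu_{B\bullet D}\co ((f\bullet g)\ot (f\bullet g)),$$
$$((f\bullet g)\ot H)\co \rho_{A\bullet T}=\rho_{B\bullet D}\co (f\bullet g)$$
can each be checked after composing with $i_{B\bullet D}$, respectively $i_{B\bullet D}\ot H$. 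Using the defining properties of $\eta_{A\bullet T}$, $\mu_{A\bullet T}$ and $\rho_{A\bullet T}$ (and their $B\bullet D$ counterparts) from Proposition \ref{product}, the fact that $f\ot g$ is a morphism of unital magmas from $A\ot T$ to $B\ot D$ (a standard consequence of the naturality of $c$, as $f$ and $g$ are), and the case $j=2$ of the identity above, each of these reduces to a routine manipulation that ends by cancelling the monomorphism.

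Finally I would note that $\bullet$ is functorial on morphisms of right $H$-comodule magmas: $id_{{\Bbb A}}\bullet id_{{\Bbb T}}=id_{A\bullet T}$ by uniqueness of factorizations, and $(f\bullet g)\co (f^{\prime}\bullet g^{\prime})=(f\co f^{\prime})\bullet (g\co g^{\prime})$ for composable morphisms, once more by cancelling $i_{B\bullet D}$ in $i_{B\bullet D}\co (f\bullet g)\co (f^{\prime}\bullet g^{\prime})=((f\co f^{\prime})\ot (g\co g^{\prime}))\co i_{A^{\prime}\bullet T^{\prime}}$. If $f$ and $g$ are isomorphisms, then $f^{-1}$ and $g^{-1}$ are again morphisms of right $H$-comodule magmas (the inverse of a morphism of unital magmas is one, and likewise for comodules), so $f^{-1}\bullet g^{-1}$ is defined and, by functoriality, is a two-sided inverse of $f\bullet g$; hence $f\bullet g$ is an isomorphism. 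The only real obstacle is the factorization in the second step; everything after it is a chain of monomorphism cancellations with no further subtleties.
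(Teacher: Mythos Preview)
Your proposal is correct and follows essentially the same approach as the paper: you obtain the factorization $f\bullet g$ from the equalizer property via the comodule-morphism conditions on $f$ and $g$ (and naturality of $c$), verify the unital-magma and $H$-comodule compatibilities by cancelling the monomorphisms $i_{B\bullet D}$ and $i_{B\bullet D}\ot H$, and conclude the isomorphism statement by exhibiting $f^{-1}\bullet g^{-1}$ as the inverse. Your explicit mention of functoriality of $\bullet$ is a slight elaboration of what the paper leaves implicit, but the argument is the same.
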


\begin{proof} Using that  $f$ and $g$ are comodule morphisms we
obtain $\rho_{B\ot D}^{1}\co (f\ot g)\co i_{A\bullet T}=\rho_{B\ot D}^{2}\co
(f\ot g)\co i_{A\bullet T}$ and as a consequence there exist a
unique morphism $(f\bullet g): A\bullet T\rightarrow B\bullet D$
such that $i_{B\bullet D}\co (f\bullet g)=(f\ot g)\co i_{A\bullet
T}$. The morphism $f\bullet g$ is a morphism of unital magmas
because $i_{B\bullet D}\co \eta_{B\bullet D}= i_{B\bullet D}\co
(f\bullet g)\co \eta_{A\bullet T}$ and for the product the equality
$i_{B\bullet D}\co \mu_{B\bullet D}\co ((f\bullet g)\ot (f\bullet
g))=i_{B\bullet D}\co (f\bullet g)\co \mu_{A\bullet T}$ holds. Also,
it is a comodule morphism because $(i_{B\bullet D}\ot H)\co
\rho_{B\bullet D}\co (f\bullet g)=(i_{B\bullet D}\ot H)\co
((f\bullet g)\ot H)\co \rho_{A\bullet T}.$

Finally, it is easy to show that, if $f$ and $g$ are isomorphisms,
$f\bullet g$ is an isomorphism with inverse $f^{-1}\bullet g^{-1}$.
\end{proof}

\begin{prop}
\label{product-commutative} Let $H$ be a flat Hopf quasigroup and  ${\Bbb A}$, ${\Bbb B}$
right $H$-comodule magmas. Then  ${\Bbb A}\bullet {\Bbb B}$ and ${\Bbb B}\bullet {\Bbb A}$ are isomorphic as right $H$-comodule magmas.
\end{prop}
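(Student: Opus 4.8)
The plan is to show that the symmetry isomorphism $c_{A,B}\colon A\ot B\rightarrow B\ot A$ restricts to the equalizers that define $A\bullet B$ and $B\bullet A$, and that the induced morphism is the desired isomorphism of right $H$-comodule magmas. First I would record the two ``swap'' identities
$$\rho_{B\ot A}^{2}\co c_{A,B}=(c_{A,B}\ot H)\co \rho_{A\ot B}^{1},\qquad \rho_{B\ot A}^{1}\co c_{A,B}=(c_{A,B}\ot H)\co \rho_{A\ot B}^{2}.$$
The first one is exactly the comodule-morphism part of Proposition \ref{tensor-product}; the second follows from that proposition applied to the pair $({\Bbb B},{\Bbb A})$ together with $c_{B,A}\co c_{A,B}=id_{A\ot B}$ and $c_{A,B}\co c_{B,A}=id_{B\ot A}$ (alternatively, both identities drop out of the naturality of $c$ and the hexagon equalities $c_{A\ot H,B}=(c_{A,B}\ot H)\co (A\ot c_{H,B})$ and $c_{A,B\ot H}=(B\ot c_{A,H})\co (c_{A,B}\ot H)$, using $c_{H,A}\co c_{A,H}=id_{A\ot H}$). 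Since $i_{A\bullet B}$ equalizes $\rho_{A\ot B}^{1}$ and $\rho_{A\ot B}^{2}$, composing the second identity on the right with $i_{A\bullet B}$ gives $\rho_{B\ot A}^{1}\co c_{A,B}\co i_{A\bullet B}=(c_{A,B}\ot H)\co \rho_{A\ot B}^{2}\co i_{A\bullet B}=(c_{A,B}\ot H)\co \rho_{A\ot B}^{1}\co i_{A\bullet B}=\rho_{B\ot A}^{2}\co c_{A,B}\co i_{A\bullet B}$, so by the universal property of the equalizer defining $B\bullet A$ there is a unique $\omega\colon A\bullet B\rightarrow B\bullet A$ in $\CC$ with $i_{B\bullet A}\co \omega=c_{A,B}\co i_{A\bullet B}$.

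Next I would build the inverse by symmetry: in the same way $c_{B,A}\co i_{B\bullet A}$ equalizes $\rho_{A\ot B}^{1}$ and $\rho_{A\ot B}^{2}$, hence factors uniquely as $i_{A\bullet B}\co \omega^{\prime}$ for some $\omega^{\prime}\colon B\bullet A\rightarrow A\bullet B$. From $i_{A\bullet B}\co \omega^{\prime}\co \omega=c_{B,A}\co i_{B\bullet A}\co \omega=c_{B,A}\co c_{A,B}\co i_{A\bullet B}=i_{A\bullet B}$ and the analogous computation on the other side, and using that $i_{A\bullet B}$ and $i_{B\bullet A}$ are monomorphisms, I would conclude $\omega^{\prime}\co \omega=id_{A\bullet B}$ and $\omega\co \omega^{\prime}=id_{B\bullet A}$, so $\omega$ is an isomorphism in $\CC$.

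Finally I would check that $\omega$ is a morphism of right $H$-comodule magmas. For the unit and the product I would use that $c_{A,B}$ is a morphism of unital magmas from $A\ot B$ to $B\ot A$ (already established in the proof of Proposition \ref{tensor-product}) together with the relations $i_{B\bullet A}\co \eta_{B\bullet A}=\eta_{B\ot A}$ and $i_{B\bullet A}\co \mu_{B\bullet A}=\mu_{B\ot A}\co (i_{B\bullet A}\ot i_{B\bullet A})$ from Proposition \ref{product}, and then cancel the monomorphism $i_{B\bullet A}$. For the coaction I would use the defining relations $(i_{A\bullet B}\ot H)\co \rho_{A\bullet B}=\rho_{A\ot B}^{2}\co i_{A\bullet B}$ and $(i_{B\bullet A}\ot H)\co \rho_{B\bullet A}=\rho_{B\ot A}^{2}\co i_{B\bullet A}$, the first swap identity above, and the equalizer property of $i_{A\bullet B}$, to show that $(i_{B\bullet A}\ot H)\co \rho_{B\bullet A}\co \omega$ and $(i_{B\bullet A}\ot H)\co (\omega\ot H)\co \rho_{A\bullet B}$ both equal $(c_{A,B}\ot H)\co \rho_{A\ot B}^{2}\co i_{A\bullet B}$, and then cancel $i_{B\bullet A}\ot H$. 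This last cancellation is the only point where flatness of $H$ enters: since $H$ is flat, $H\ot -$ preserves equalizers, hence so does $-\ot H$ by the symmetry, and therefore $i_{B\bullet A}\ot H$ is a monomorphism. The verification for the coaction — keeping track of the twisted coaction $\rho_{B\ot A}^{1}$ under the symmetry $c$ — is the main point of the argument; everything else is a routine diagram chase. I would also remark that, in contrast with Proposition \ref{op-magma}, cocommutativity of $H$ is not used anywhere.
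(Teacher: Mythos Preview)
Your proof is correct and follows essentially the same route as the paper: it constructs the isomorphism $\tau_{A,B}$ (your $\omega$) by factoring $c_{A,B}\co i_{A\bullet B}$ through the equalizer $i_{B\bullet A}$, builds the inverse symmetrically, and then checks the unital magma and $H$-comodule conditions by cancelling $i_{B\bullet A}$ and $i_{B\bullet A}\ot H$. Your explicit recording of the two swap identities and your remark that flatness of $H$ is used precisely to make $i_{B\bullet A}\ot H$ a monomorphism are accurate refinements of what the paper leaves implicit.
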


\begin{proof} First note that by the naturally of $c$ and the properties of the equaliser morphism $i_{A\bullet B}$ we have that $\rho_{B\otimes A}^1\co c_{A,B}\co i_{A\bullet B}=\rho_{B\otimes A}^2\co c_{A,B}\co i_{A\bullet B}$ and then there exist    a morphism $\tau_{A,B}:A\bullet B\rightarrow B\bullet A$ such that $i_{B\bullet A}\co \tau_{A,B}=c_{A,B}\co i_{A\bullet B}$. Also there exists an unique morphism $\tau_{B,A}:B\bullet A\rightarrow A\bullet B$ such that $i_{A\bullet B}\co \tau_{B,A}=c_{B,A}\co i_{B\bullet A}$. Then $i_{A\bullet B}\co \tau_{B,A}\co \tau_{A,B}=c_{B,A}\co c_{A,B}\co i_{A\bullet B}=i_{A\bullet B}$ and  similarly $i_{B\bullet A}\co \tau_{A,B}\co \tau_{B,A}=i_{B\bullet A}$. Thus  $\tau_{A,B}$ is an isomorphism with inverse $\tau_{B,A}$. Moreover, $i_{B\bullet A}\co \tau_{A,B}\co \eta_{A\bullet B}=c_{A,B}\co i_{A\bullet B}\co \eta_{A\bullet B}=\eta_{B\ot A}=i_{B\bullet A}\co \eta_{B\bullet A}$ and
$$i_{B\bullet A}\co \tau_{A,B}\co \mu_{A\bullet B}=\mu_{B\ot A}\co ((c_{A,B}\co i_{A\bullet B})\ot (c_{A,B}\co i_{A\bullet B}))=\mu_{B\ot A}\co ((i_{B\bullet A}\co \tau_{A,B})\ot (i_{B\bullet A}\co \tau_{A,B}))$$
$$=i_{B\bullet A}\co \mu_{B\bullet A}\co (\tau_{A,B}\ot \tau_{A,B}).$$
Therefore, $\tau_{A,B}$ is a morphism of unital magmas and finally it is a morphism of right $H$-comodules because $((i_{B\bullet A}\co \tau_{A,B})\ot H)\co \rho_{A\bullet B}=(i_{B\bullet A}\ot H)\co \rho_{B\bullet A}\co \tau_{A,B}$.

\end{proof}

\begin{prop}
\label{product-associative} Let $H$ be a flat  Hopf quasigroup and
${\Bbb A}$, ${\Bbb B}$, ${\Bbb
D}$ right $H$-comodule magmas such that $A$ and $D$ are
flat. Then  ${\Bbb A}\bullet ({\Bbb B}\bullet {\Bbb D})$ and $({\Bbb
A}\bullet {\Bbb B})\bullet {\Bbb D}$ are isomorphic as right
$H$-comodule magmas.
\end{prop}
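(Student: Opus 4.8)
The plan is to build the associativity isomorphism concretely, using the defining universal properties of the equalizers that constitute the $\bullet$-products, rather than trying to manipulate elements. First I would record the ``iterated equalizer'' descriptions of the two sides. By Proposition \ref{product}, $B\bullet D$ sits inside $B\ot D$ via $i_{B\bullet D}$, and since $A$ is flat, $A\ot(B\bullet D)$ is the equalizer of $A\ot\rho^1_{B\otimes D}$ and $A\ot\rho^2_{B\otimes D}$ inside $A\ot B\ot D$; then $A\bullet(B\bullet D)$ is the equalizer, inside $A\ot(B\bullet D)$, of the two coactions coming from $\rho_A$ (twisted into the first slot) and from $\rho_{B\bullet D}$. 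Composing, $A\bullet(B\bullet D)$ is realized as a subobject of $A\ot B\ot D$ cut out by three equalizer conditions: the ``$AB$-condition'' $\rho^1$ versus $\rho^2$ between the $A$- and $B$-legs, the ``$BD$-condition'' between the $B$- and $D$-legs, and (because $\rho_{A\bullet B}$, $\rho_{B\bullet D}$ are themselves factorizations of $\rho^2$) the coaction-matching that makes the whole thing an $H$-comodule. A completely symmetric analysis expresses $(A\bullet B)\bullet D$ as a subobject of $A\ot B\ot D$ cut out by the same three conditions (the flatness of $D$ is what lets $(A\bullet B)\ot D$ be computed as an equalizer).

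The key step is then to show that both subobjects of $A\ot B\ot D$ are \emph{the same} equalizer, i.e. that the three equalizer conditions can be combined into a single universal property independent of the order of bracketing. Concretely I would verify that a morphism $f:X\to A\ot B\ot D$ factors through $A\bullet(B\bullet D)$ if and only if it equalizes the pair $\bigl(\rho^1_{AB}\ot D,\ \rho^2_{AB}\ot D\bigr)$ (conditions in the first two legs) \emph{and} the pair $\bigl(A\ot\rho^1_{BD},\ A\ot\rho^2_{BD}\bigr)$ (conditions in the last two legs), using flatness of $A$ to push the first pair through $A\ot-$ and flatness of $D$ plus flatness of $H$ to handle the second. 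The same characterization holds verbatim for $(A\bullet B)\bullet D$. Uniqueness of equalizers then yields a canonical isomorphism $\omega:A\bullet(B\bullet D)\to(A\bullet B)\bullet D$ with $i_{(A\bullet B)\bullet D}\co\omega = i_{A\bullet(B\bullet D)}$ after identifying both equalizer monomorphisms with the appropriate composite into $A\ot B\ot D$; more precisely, $\omega$ is the unique morphism compatible with the inclusions into $A\ot B\ot D$.

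Finally I would check that $\omega$ is a morphism of right $H$-comodule magmas. For the unit: $\eta_{A\bullet(B\bullet D)}$ and $\eta_{(A\bullet B)\bullet D}$ both become $\eta_A\ot\eta_B\ot\eta_D$ after composing with the inclusion into $A\ot B\ot D$, so $\omega$ intertwines them. For the product: the inclusions into $A\ot B\ot D$ are magma morphisms (this is how $\mu_{A\bullet(B\bullet D)}$ etc. are defined in Proposition \ref{product}, applied twice), $\mu_{A\ot B\ot D}$ is associative-free but the relevant identity only needs that both $i$'s factor $\mu_{A\otimes B\otimes D}\co(i\ot i)$, so $i\co\omega\co\mu = i\co\mu\co(\omega\ot\omega)$ and one cancels the monomorphism $i=i_{(A\bullet B)\bullet D}$. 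For the comodule structure: both $\rho$'s are the factorizations of $\rho^2$ (the coaction on the last, or equivalently via the matching condition, on any leg) through the respective $i\ot H$, so $(i\ot H)\co\rho\co\omega = (i\ot H)\co(\omega\ot H)\co\rho$, and again $i\ot H$ is a monomorphism since $H$ is flat.

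The main obstacle I anticipate is purely bookkeeping: making the ``three conditions cut out the same subobject'' argument airtight requires being careful about which coaction ($\rho^1$ or $\rho^2$, and on which pair of legs) appears at each stage, and repeatedly invoking flatness of $A$, $D$, and $H$ to commute the functors $A\ot-$, $-\ot D$, $-\ot H$ past the relevant equalizers; a diagrammatic (Sweedler-style string-diagram) presentation of $\rho^1_{A\otimes B}$, $\rho^2_{A\otimes B}$ and of the nested equalizers would make the verification transparent, but there is no genuine conceptual difficulty once the iterated-equalizer picture is set up.
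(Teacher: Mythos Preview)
Your proposal is correct and follows essentially the same route as the paper: both arguments identify $A\bullet(B\bullet D)$ and $(A\bullet B)\bullet D$ with the same subobject of $A\ot B\ot D$ via the composite equalizer inclusions $(A\ot i_{B\bullet D})\co i_{A\bullet(B\bullet D)}$ and $(i_{A\bullet B}\ot D)\co i_{(A\bullet B)\bullet D}$, using flatness of $A$, $D$, and $H$ to recognise the iterated equalizers, and then check the unit, product, and coaction compatibilities by pushing everything into $A\ot B\ot D$. One small clarification: your ``three conditions'' are really only two---the $AB$-condition and the $BD$-condition---since the $H$-comodule structure on the $\bullet$-product is not an extra equalizer condition but a consequence (the coaction $\rho_{A\bullet B}$ is the factorization of $\rho^2$, so once the two pairwise conditions hold the comodule structure is automatic); the paper packages the same content slightly asymmetrically by first producing a morphism $h:A\bullet(B\bullet D)\to(A\bullet B)\ot D$ and then showing $h$ is itself the equalizer of $\rho^1_{(A\bullet B)\ot D}$ and $\rho^2_{(A\bullet B)\ot D}$.
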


\begin{proof}

First, note that
$$
 \setlength{\unitlength}{3mm}
\begin{picture}(45,4)
\put(11,2){\vector(1,0){6}} \put(26,2.5){\vector(1,0){7}}
\put(26,1.5){\vector(1,0){7}} \put(7,2){\makebox(0,0){$A\ot B\bullet
D$}}

\put(22,2){\makebox(0,0){$A\ot B\ot D$}}
\put(38,2){\makebox(0,0){$A\ot B\ot D\otimes H$}}
\put(13,3){\makebox(0,0){$\scriptstyle A\ot i_{B \bullet D}$}}
\put(30,3.5){\makebox(0,0){$\scriptstyle A\ot \rho_{B\ot D}^1$}}
\put(30,0.5){\makebox(0,0){$\scriptstyle A\ot \rho_{B\ot D}^2$}}
\end{picture}
$$
and
$$
 \setlength{\unitlength}{3mm}
\begin{picture}(45,4)
\put(10,2){\vector(1,0){6}} \put(24,2.5){\vector(1,0){11}}
\put(24,1.5){\vector(1,0){11}} \put(7,2){\makebox(0,0){$A\bullet
B\ot D$}}

\put(20,2){\makebox(0,0){$A\ot B\ot D$}}
\put(40,2){\makebox(0,0){$A\ot B\ot D\ot H$}}
\put(13,3){\makebox(0,0){$\scriptstyle  i_{A \bullet B}\ot D$}}
\put(29,3.5){\makebox(0,0){$\scriptstyle (A\ot B\ot c_{H,D} )\co
(\rho_{A\ot B}^1\ot D)$}} \put(29,0.5){\makebox(0,0){$\scriptstyle
(A\ot B\ot c_{H,D} )\co (\rho_{A\ot B}^2\ot D)$}}
\end{picture}
$$
are equalizer diagrams because $A$ and $D$ are flat and $A\ot B\ot
c_{H,D}$ an isomorphism. On the other hand, it is  easy to show that
$$(A\ot i_{B\bullet D}\ot H)\co \rho_{A\ot B\bullet D}^1=
(A\ot B\ot c_{H,D} )\co (\rho_{A\ot B}^1\ot D)\co (A\ot i_{B\bullet
D})$$ and
$$(A\ot i_{B\bullet D}\ot H)\co \rho_{A\ot B\bullet D}^2=
(A\ot B\ot c_{H,D} )\co (\rho_{A\ot B}^2\ot D)\co (A\ot i_{B\bullet
D}).$$ Therefore
$$(A\ot B\ot c_{H,D} )\co (\rho_{A\ot B}^1\ot D)\co (A\ot i_{B\bullet
D})\co  i_{A\bullet (B\bullet D)}=(A\ot B\ot c_{H,D} )\co
(\rho_{A\ot B}^2\ot D)\co (A\ot i_{B\bullet D})\co  i_{A\bullet
(B\bullet D)}$$ and as a consequence there exist a unique morphism
$h:A\bullet (B\bullet D)\rightarrow (A\bullet B)\ot D$ such that
\begin{equation}
\label{h} (i_{A\bullet B}\ot D)\co h=(A\ot i_{B\bullet D})\co
i_{A\bullet (B\bullet D)}.
\end{equation}

The diagram
$$
 \setlength{\unitlength}{3mm}
\begin{picture}(45,4)
\put(11,2){\vector(1,0){6}} \put(26,2.5){\vector(1,0){7}}
\put(26,1.5){\vector(1,0){7}}

\put(7,2){\makebox(0,0){$A\bullet (B\bullet D)$}}

\put(22,2){\makebox(0,0){$(A\bullet  B)\otimes D$}}
\put(38,2){\makebox(0,0){$A\bullet B\ot D\otimes H$}}
\put(14,3){\makebox(0,0){$ h$}}
\put(30,3.5){\makebox(0,0){$\scriptstyle  \rho_{A\bullet B\ot
D}^1$}} \put(30,0.5){\makebox(0,0){$\scriptstyle \rho_{A\bullet B\ot
D}^2$}}
\end{picture}
$$
is an equalizer diagram. Indeed,  it is easy to see that $\rho_{A\bullet B\ot
D}^1\co h=\rho_{A\bullet B\ot
D}^1\co h$ and, if $f:C\rightarrow A\bullet B\ot D$
is a morphism such that $\rho_{(A\bullet B)\ot D}^{1}\co
f=\rho_{(A\bullet B)\ot D}^{2}\co f$, we have that $$(A\ot
\rho_{B\ot D}^1)\co (i_{A \bullet B}\ot D)\co f=(A\ot \rho_{B\ot
D}^2)\co (i_{A \bullet B}\ot D)\co f$$ because
$$(A\ot \rho_{B\ot D}^1)\co (i_{A \bullet
B}\ot D)=(i_{A \bullet B}\ot D\ot H)\co \rho_{A\bullet B\ot D}^1$$
and
$$(A\ot \rho_{B\ot D}^2)\co (i_{A \bullet
B}\ot D)=(i_{A \bullet B}\ot D\ot H)\co \rho_{A\bullet B\ot D}^2.$$
Then, there exists a unique morphism $t:C\rightarrow A\ot B\bullet D$
such that  $(A\ot i_{B\bullet D})\co t=(i_{A \bullet B}\ot D)\co f$.
The morphism $t$ factorizes through the equalizer $i_{A\bullet (B\bullet D)}$ because 
$$(A\ot i_{B \bullet D}\ot H)\co \rho_{A\ot B\bullet D}^{1}\co t=
(A\ot i_{B \bullet D}\ot H)\co \rho_{A\ot B\bullet D}^{2}\co t$$ and then 
$$\rho_{A\ot B\bullet D}^{1}\co t=
 \rho_{A\ot B\bullet D}^{2}\co t$$
holds. Thus, there exists a unique morphism $g:C\rightarrow A\bullet
(B\bullet D)$ satisfying the equality $i_{A\bullet (B\bullet D)}\co
g=t$. As a consequence
$$(i_{A\bullet B}\ot D)\co h\co g=(A\ot i_{B\bullet D})\co  i_{A\bullet
(B\bullet D)}\co g=(A\ot i_{B\bullet D})\co t=(i_{A \bullet B}\ot
D)\co f$$ and then $h\co g=f$. Moreover, $g$ is the unique morphism
such that $h\co g=t$, because if $d:C\rightarrow A\bullet (B\bullet
D)$ satisfies $h\co d=f$, we obtain that $i_{A\bullet (B\bullet
D)}\co d=t$ and therefore $d=g$.

As a cosequence, there exists an isomorphism $n_{A,B,C}:A\bullet (B\bullet
D)\rightarrow (A\bullet B)\bullet D$ such that
\begin{equation}
\label{inv-ass-const} i_{(A\bullet B)\bullet D}\co n_{A,B,D}=h
\end{equation}

The isomorphism $n_{A,B,C}$ is a morphism of unital magmas because
by (\ref{h}), (\ref{inv-ass-const}) and the naturality of $c$ we
have

\begin{itemize}
\item[ ]$\hspace{0.38cm}(i_{A\bullet B}\ot D)\co i_{(A\bullet B)\bullet D}\co n_{A,B,D}\co
\eta_{A\bullet (B\bullet D)} $
\item[ ]$=(i_{A\bullet B}\ot D)\co h\co \eta_{A\bullet (B\bullet D)} $
\item[ ]$=(A\ot i_{B\bullet D})\co
i_{A\bullet (B\bullet D)}\co \eta_{A\bullet (B\bullet D)} $
\item[ ]$=\eta_{A}\ot \eta_{B}\ot \eta_{D} $
\item[ ]$=(i_{A\bullet B}\ot D)\co i_{(A\bullet B)\bullet D}\co \eta_{(A\bullet B)\bullet D} $
\end{itemize}
and
\begin{itemize}
\item[ ]$\hspace{0.38cm}(i_{A\bullet B}\ot D)\co i_{(A\bullet B)\bullet D}\co n_{A,B,D}\co \mu_{A\bullet (B\bullet D)}  $
\item[ ]$=(i_{A\bullet B}\ot D)\co h \co \mu_{A\bullet (B\bullet D)}  $
\item[ ]$=(A\ot i_{B\bullet D})\co
i_{A\bullet (B\bullet D)}\co \mu_{A\bullet (B\bullet D)} $
\item[ ]$=(A\ot i_{B\bullet D})\co \mu_{A\ot (B\bullet D)} \co
(i_{A\bullet (B\bullet D)}\ot i_{A\bullet (B\bullet D)})$
\item[ ]$= \mu_{A\ot B\ot D}\co (((A\ot i_{B\bullet D})\co
i_{A\bullet (B\bullet D)})\ot ((A\ot i_{B\bullet D})\co i_{A\bullet
(B\bullet D)})) $
\item[ ]$=\mu_{A\ot B\ot D}\co (((i_{A\bullet B}\ot D)\co h)\ot
((i_{A\bullet B}\ot D)\co h)) $
\item[ ]$=(i_{A\bullet B}\ot D)\co \mu_{A\bullet B\ot D}\co (h\ot h) $
\item[ ]$= (i_{A\bullet B}\ot D)\co \mu_{A\bullet B\ot D}\co
((i_{(A\bullet B)\bullet D}\co n_{A,B,C})\ot (i_{(A\bullet B)\bullet D}\co n_{A,B,C})) $
\item[ ]$=(i_{A\bullet B}\ot D)\co i_{(A\bullet B)\bullet D}\co \mu_{(A\bullet B)\bullet D}\co
(n_{A,B,D}\ot n_{A,B,D}) $
\end{itemize}
Finally, using a similar reasoning, we obtain that $n_{A,B,C}$ is a
morphism of right $H$-comodules because
\begin{itemize}
\item[ ]$\hspace{0.38cm}(i_{A\bullet B}\ot D\ot H)\co (i_{(A\bullet B)\bullet D}\ot H)\co
\rho_{(A\bullet B)\bullet D}\co n_{A,B,D} $
\item[ ]$=(A\ot B\ot \rho_{D})\co (i_{A\bullet B}\ot D)\co i_{(A\bullet B)\bullet D}\co n_{A,B,C} $
\item[ ]$= (A\ot B\ot \rho_{D})\co (i_{A\bullet B}\ot D)\co h$
\item[ ]$=(A\ot B\ot \rho_{D})\co (A\ot i_{B\bullet D})\co
i_{A\bullet (B\bullet D)}$
\item[ ]$=(A\ot i_{B\bullet D}\ot H)\co (A\ot \rho_{B\bullet D})\co i_{A\bullet (B\bullet D)}  $
\item[ ]$=(A\ot i_{B\bullet D}\ot H)\co (i_{A\bullet (B\bullet D)}\ot H)\co \rho_{A\bullet (B\bullet D)} $
\item[ ]$=(i_{A\bullet B}\ot D\ot H)\co (h \ot H)\co \rho_{A\bullet (B\bullet D)}$
\item[ ]$= (i_{A\bullet B}\ot D\ot H)\co (i_{(A\bullet B)\bullet D}\ot H)\co
(n_{A,B,D}\ot H)\co \rho_{A\bullet (B\bullet D)}.$
\end{itemize}

\end{proof}

\begin{prop}
\label{product-unit}  Let $H$ be a cocommutative  Hopf quasigroup
and  ${\Bbb A}$ a right $H$-comodule magma. Then
\begin{equation}
\label{equalizer} \setlength{\unitlength}{3mm}
\begin{picture}(22,5)
\put(2,2){\vector(1,0){4}} \put(12,2.5){\vector(1,0){6}}
\put(12,1.5){\vector(1,0){6}} \put(0,2){\makebox(0,0){$A$}}
\put(9,2){\makebox(0,0){$A\otimes H$}}
\put(22,2){\makebox(0,0){$A\otimes H\otimes H,$}}
\put(4,3){\makebox(0,0){$\rho_{A}$}}
\put(15,4){\makebox(0,0){$\rho_{A\otimes H}^1$}}
\put(15,0.5){\makebox(0,0){$\rho_{A\otimes H}^2$}}
\end{picture}
\end{equation}
is an equalizer diagram. If $H$ is flat ${\Bbb A}\bullet {\Bbb H}$
and ${\Bbb A}$ are isomorphic as right $H$-comodule magmas.
\end{prop}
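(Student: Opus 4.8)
The statement has two parts, and the plan is to prove them in turn: first that the diagram (\ref{equalizer}) is an equalizer, and then, under the flatness hypothesis, that this identifies ${\Bbb A}$ with ${\Bbb A}\bullet{\Bbb H}$. For the first part, recall that with ${\Bbb B}={\Bbb H}$ one has $\rho_{A\otimes H}^{1}=(A\otimes c_{H,H})\circ(\rho_{A}\otimes H)$ and $\rho_{A\otimes H}^{2}=A\otimes\delta_{H}$. That $\rho_{A}$ equalizes this pair is the short computation
$$\rho_{A\otimes H}^{1}\circ\rho_{A}=(A\otimes c_{H,H})\circ(\rho_{A}\otimes H)\circ\rho_{A}=(A\otimes(c_{H,H}\circ\delta_{H}))\circ\rho_{A}=(A\otimes\delta_{H})\circ\rho_{A}=\rho_{A\otimes H}^{2}\circ\rho_{A},$$
which uses the right $H$-comodule axiom $(\rho_{A}\otimes H)\circ\rho_{A}=(A\otimes\delta_{H})\circ\rho_{A}$ and then the cocommutativity $c_{H,H}\circ\delta_{H}=\delta_{H}$.

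For the universal property, let $f\colon C\to A\otimes H$ satisfy $\rho_{A\otimes H}^{1}\circ f=\rho_{A\otimes H}^{2}\circ f$; the candidate factorization is $g:=(A\otimes\varepsilon_{H})\circ f$, which is forced because $(A\otimes\varepsilon_{H})\circ\rho_{A}=id_{A}$, giving at once the uniqueness of any factorization. To obtain $\rho_{A}\circ g=f$ I would compose the equalizing identity with $A\otimes\varepsilon_{H}\otimes H$. On the left-hand side, naturality of $c$ yields $(\varepsilon_{H}\otimes H)\circ c_{H,H}=H\otimes\varepsilon_{H}$, so $(A\otimes\varepsilon_{H}\otimes H)\circ\rho_{A\otimes H}^{1}=(A\otimes H\otimes\varepsilon_{H})\circ(\rho_{A}\otimes H)=\rho_{A}\circ(A\otimes\varepsilon_{H})$ by the interchange law, and the left-hand side collapses to $\rho_{A}\circ g$; on the right-hand side the counit axiom $(\varepsilon_{H}\otimes H)\circ\delta_{H}=id_{H}$ gives $(A\otimes\varepsilon_{H}\otimes H)\circ\rho_{A\otimes H}^{2}=id_{A\otimes H}$, so the right-hand side is $f$. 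Hence $\rho_{A}\circ g=f$ and (\ref{equalizer}) is an equalizer.

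Now assume $H$ flat. By the construction in Proposition \ref{product}, $i_{A\bullet H}\colon A\bullet H\to A\otimes H$ is itself an equalizer of the pair $(\rho_{A\otimes H}^{1},\rho_{A\otimes H}^{2})$; comparing it with the equalizer $\rho_{A}$ just exhibited, the universal property produces a unique isomorphism $\omega\colon A\to A\bullet H$ with $i_{A\bullet H}\circ\omega=\rho_{A}$, whose inverse is the factorization of $i_{A\bullet H}$ through $\rho_{A}$. It remains to see that $\omega$ respects the unit, the product and the coaction, and for each of these I would simply precompose with the monomorphism $i_{A\bullet H}$ (respectively $i_{A\bullet H}\otimes H$, a monomorphism because $H$ is flat): $\omega\circ\eta_{A}=\eta_{A\bullet H}$ reduces to $\rho_{A}\circ\eta_{A}=\eta_{A}\otimes\eta_{H}$, which is axiom (b1) of Definition~\ref{H-comodule-magma}; $\omega\circ\mu_{A}=\mu_{A\bullet H}\circ(\omega\otimes\omega)$ reduces to $\rho_{A}\circ\mu_{A}=\mu_{A\otimes H}\circ(\rho_{A}\otimes\rho_{A})$, which is (b2); and $\rho_{A\bullet H}\circ\omega=(\omega\otimes H)\circ\rho_{A}$ reduces, via the defining relation $(i_{A\bullet H}\otimes H)\circ\rho_{A\bullet H}=(A\otimes\delta_{H})\circ i_{A\bullet H}$, to the comodule coassociativity $(A\otimes\delta_{H})\circ\rho_{A}=(\rho_{A}\otimes H)\circ\rho_{A}$. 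This gives the desired isomorphism of right $H$-comodule magmas.

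The only step that is not routine is the verification of the universal property in the second paragraph: one has to guess the factorization $g=(A\otimes\varepsilon_{H})\circ f$ and notice that collapsing the middle tensorand of the equalizing identity (by composing with $A\otimes\varepsilon_{H}\otimes H$) returns $f$ on one side and $\rho_{A}\circ g$ on the other. It is worth noting that cocommutativity of $H$ is used only to show that $\rho_{A}$ equalizes the pair; everything else is formal bookkeeping with the universal properties of equalizers and with the monomorphisms $i_{A\bullet H}$ and $i_{A\bullet H}\otimes H$.
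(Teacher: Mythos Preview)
Your proof is correct and follows essentially the same approach as the paper. The only cosmetic difference is that the paper names the isomorphism in the opposite direction, $r_{A}\colon A\bullet H\to A$ with $\rho_{A}\circ r_{A}=i_{A\bullet H}$, and verifies the magma and comodule compatibilities by precomposing with the equalizer $\rho_{A}$ rather than with $i_{A\bullet H}$; the underlying identities checked are the same.
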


\begin{proof} We will begin by showing that 
 (\ref{equalizer}) is an equalizer diagram. Indeed, if $H$ is
cocommutative we have that $\rho_{A\otimes H}^1\co \rho_{A}=(A\ot
(c_{H,H}\co \delta_{H}))\co \rho_{A}=\rho_{A\otimes H}^2\co
\rho_{A}.$ Moreover, if there exists a morphism $f:D\rightarrow A\ot
H$ such that $\rho_{A\otimes H}^1\co f=\rho_{A\otimes H}^2\co f$, we
have that $\rho_{A}\co (A\ot \varepsilon_{H})\co f=f$ and if
$g:D\rightarrow A$ is a morphism such that $\rho_{A}\co g=f$ this
gives $g=(A\ot \varepsilon_{H})\co f$.  Therefore there is a 
unique isomorphism $r_{A}:A\bullet H\rightarrow A$ satisfying
$\rho_{A}\co r_{A}=i_{A\bullet H}$.

In the second step we show that $r_{A}$ is a morphism of right $H$-comodule magmas. Trivially, $r_{A}\co \eta_{A\bullet H}=\eta_{A}$ because  $\rho_{A}\co r_{A}\co \eta_{A\bullet H}=i_{A\bullet H} \co \eta_{A\bullet H}=\eta_{A}\ot \eta_{H}=\rho_{A}\co \eta_{A}$. Also $\mu_{A}\co (r_{A}\ot r_{A})=r_{A}\co \mu_{A\bullet H}$ because
$\rho_{A}\co \mu_{A}\co (r_{A}\ot r_{A})=\rho_{A}\co r_{A}\co \mu_{A\bullet H}$ and as a consequence $r_{A}$ is a morphism of unital magmas. Finally, the $H$-comodule condition follows from $((\rho_{A}\co r_{A})\ot H)\co \rho_{A\bullet H}=(\rho_{A}\ot H)\co \rho_{A}\co r_{A}$.
\end{proof}

\begin{rem}
\label{rem-product-unit}
{\rm Note that, under the conditions of the previous proposition, the coaction for ${\Bbb A}\bullet {\Bbb H}$ is $i_{A\bullet B}$.  On the other hand, Proposition \ref{product-unit} gives that
$$
\setlength{\unitlength}{3mm}
\begin{picture}(22,4)
\put(2,2){\vector(1,0){4}}
\put(12,2.5){\vector(1,0){6}}
\put(12,1.5){\vector(1,0){6}}
\put(0,2){\makebox(0,0){$H$}}
\put(9,2){\makebox(0,0){$H\otimes H$}}
\put(22,2){\makebox(0,0){$A\otimes H\otimes H,$}}
\put(4,3){\makebox(0,0){$\delta_{H}$}}
\put(15,4){\makebox(0,0){$\rho_{H\otimes H}^1$}}
\put(15,0.5){\makebox(0,0){$\rho_{H\otimes H}^2$}}
\end{picture}
$$ is an equalizer diagram.

}
\end{rem}

\begin{prop}
\label{monoidal-magmas}
 Let $H$ be a cocommutative  Hopf quasigroup and $Mag_{f}({\mathcal C}, H)$ be the category whose objects are flat $H$-comodule
magmas and whose arrows are the morphism of $H$-comodule magmas. Then  $Mag_{f}({\mathcal C}, H)$
is a symmetric monoidal category.
\end{prop}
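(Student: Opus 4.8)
The plan is to promote $Mag_{f}(\mathcal C,H)$ to a symmetric monoidal category whose tensor product is the operation $\bullet$ of Proposition \ref{product}, whose unit object is ${\Bbb H}=(H,\delta_{H})$, and whose associativity, unit and symmetry constraints are the isomorphisms $n_{A,B,D}$, $r_{A}$ and $\tau_{A,B}$ built in Propositions \ref{product-associative}, \ref{product-unit} and \ref{product-commutative} (the left unit constraint being recovered as $r_{A}\co\tau_{H,A}: {\Bbb H}\bullet{\Bbb A}\to{\Bbb A}$). The first task is to see that $\bullet$ really is an internal bifunctor: on objects this is Proposition \ref{product}, to which one must add that $A\bullet B$ is flat whenever $A$ and $B$ are, so that $\bullet$ does not leave $Mag_{f}(\mathcal C,H)$; on morphisms it is Proposition \ref{morphisms}; and functoriality, $id_{{\Bbb A}}\bullet id_{{\Bbb B}}=id_{{\Bbb A}\bullet{\Bbb B}}$ and $(f^{\prime}\bullet g^{\prime})\co(f\bullet g)=(f^{\prime}\co f)\bullet(g^{\prime}\co g)$, is immediate by composing with the equalizer morphism $i_{\bullet}$ and invoking uniqueness of factorizations. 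The object ${\Bbb H}$ lies in the category since $H$ is flat.

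Next I would establish naturality of the four structural isomorphisms. The recipe is uniform: to prove an identity between two morphisms landing in a $\bullet$-product, precompose both sides with the corresponding equalizer morphism $i_{\bullet}$ (a monomorphism), rewrite using the characterising equations of $n$, $r$ and $\tau$ (for instance $\rho_{A}\co r_{A}=i_{A\bullet H}$, $i_{B\bullet A}\co\tau_{A,B}=c_{A,B}\co i_{A\bullet B}$, and (\ref{h})--(\ref{inv-ass-const}) for $n$) together with the naturality of $c$, and then cancel $i_{\bullet}$. The very same pattern settles the coherence axioms: the pentagon for $n$, the triangle relating $n$, $l$ and $r$, and the two hexagons relating $n$ and $\tau$. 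Composing any of these identities with the embedding of the relevant iterated $\bullet$-product into the corresponding iterated $\ot$-product --- which is monic, being a composite of equalizer morphisms --- reduces it to an identity built solely from instances of the symmetry $c$ and of the $i_{\bullet}$'s inside ${\mathcal C}$. Since ${\mathcal C}$ is strict, the associativity data collapse: in the pentagon and triangle one is left with trivialities, and in the hexagons with the hexagon identity for $c$, which holds by coherence in the symmetric monoidal category ${\mathcal C}$.

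The step I expect to be the real obstacle --- in fact essentially the only point that is not purely formal --- is the closure of the class of flat objects under $\bullet$, i.e.\ the flatness of $A\bullet B$ whenever $A$ and $B$ are flat. This is where the flatness (indeed faithful flatness) of $H$ has to enter, via the concrete presentation of $A\bullet B$ as the equalizer of $\rho^{1}_{A\ot B}$ and $\rho^{2}_{A\ot B}$ inside the flat object $A\ot B$ and the fact that $-\ot H$ preserves equalizers and reflects isomorphisms. Everything else is a lengthy but mechanical bookkeeping exercise driven by the ``compose with $i_{\bullet}$, use the naturality of $c$, invoke uniqueness'' template that already pervades the preceding propositions.
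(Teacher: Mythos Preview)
Your plan is exactly the paper's: it declares the tensor product to be $\bullet$, the unit $\Bbb H$, the associativity constraint $n_{A,B,D}^{-1}$, the right unit $r_A$, the left unit $r_A\co\tau_{H,A}$, the symmetry $\tau$, and the action on morphisms the $f\bullet g$ of Proposition~\ref{morphisms}; it then says that naturality, the Pentagon and the Triangle are ``easy but tedious'' and leaves them to the reader. Your ``compose with $i_{\bullet}$ and invoke uniqueness'' template is precisely the mechanism implicit in those omitted verifications, so on the level of strategy there is nothing to compare.

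The one substantive point you single out --- closure of flat objects under $\bullet$ --- is worth a comment, because the paper's proof does \emph{not} address it at all: it simply names the constituents of the monoidal structure and moves on. So the difficulty you anticipate is not resolved in the paper either; it is silently passed over. Note, moreover, that the hypothesis as stated is only that $H$ is cocommutative (flatness of $H$ is implicit from Proposition~\ref{product}, but faithful flatness is not assumed), so the argument you sketch via reflection of isomorphisms is not available in this generality. In the paper's actual applications the issue never bites: the only $\bullet$-products that matter later are of Galois $H$-objects, and for those Proposition~\ref{product-Gal} establishes faithful flatness of $A\bullet B$ directly using the canonical isomorphisms $\gamma_A,\gamma_B$. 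If you want a self-contained proof of Proposition~\ref{monoidal-magmas} in full generality, the flatness of $A\bullet B$ is a genuine lacuna you would have to fill --- and the paper offers no hint for it.
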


\begin{proof} The category $Mag_{f}({\mathcal C}, H)$ is a monoidal
category with the tensor product defined by the product $"\bullet "
$ introduced in Proposition \ref{product}, with unit ${\Bbb H}$,
with associative constraints $\mathfrak{a}_{{\Bbb A},{\Bbb B},{\Bbb
D}}=n_{A,B,D}^{-1}$, where $n_{A,B,D}$ is the isomorphism defined in
Proposition \ref{product-associative}, and right unit constraints
and left unit constraints $\mathfrak{r}_{A}=r_{A}$,
$\mathfrak{l}_{A}=r_{A}\co \tau_{H,A}$ respectively, where $r_{A}$
is the isomorphism defined in Proposition \ref{product-unit} and
$\tau_{H,A}$ the one defined in Proposition
\ref{product-commutative}. It is easy but tedious, and we leave the
details to the reader, to show that  associative constraints and
 right and left unit constraints are natural and satisfy the
Pentagon Axiom and the Triangle Axiom. Finally the tensor product of
two morphisms is defined by Proposition \ref{morphisms} and, of
course, the symmetry isomorphism is the transformation $\tau$
defined in Proposition \ref{product-commutative}.
\end{proof}

\section{The group of strong Galois objects}

The aim of this section is to introduce the notion of  strong
Galois $H$-object for a cocommutative Hopf quasigroup $H$. We will
prove that the set of isomorphism classes of strong $H$-Galois
objects is a group that becomes  the classical Galois group when $H$
is a cocommutative Hopf algebra.

\begin{defin}
\label{Galois} {\rm Let $H$ be a Hopf quasigroup and ${\Bbb
A}$ a right $H$-comodule magma. We will say that ${\Bbb
A}$ is a  Galois $H$-object if
\begin{itemize}
\item[(c1)] $A$ is faithfully flat.
\item[(c2)] The canonical morphism $\gamma_{A}=(\mu_{A}\ot H)\co
(A\ot \rho_{A}):A\ot A\rightarrow A\ot H$ is an isomorphism.
\end{itemize}
If moreover, $f_{A}=\gamma_{A}^{-1}\co (\eta_{A}\ot H):H\rightarrow
A^e$ is a morphism of unital magmas, we will say that ${\Bbb A}$ is
a strong Galois $H$-object.

A morphism between to (strong) Galois $H$-objects is  a morphism of
right $H$-comodule magmas.

 Note that if ${\Bbb A}$ is a strong Galois $H$-object and ${\Bbb B}$ is a Galois $H$-object isomorphic to ${\Bbb A}$ as Galois $H$-objects then ${\Bbb B}$ is also a strong Galois $H$-object because, if $g:A\rightarrow B$ is the isomorphism, we have $\gamma_{B}\co (g\ot g)=(g\ot H)\co \gamma_{A}$ and it follows that $f_{B}=
(g\ot g)\co f_{A}$. Then, $f_{B}$ is a morphism of unital magmas and ${\Bbb B}$ is strong.

}
\end{defin}

\begin{ej}
\label{H-strong} {\rm If $H$ is a faithfully flat Hopf quasigroup,
${\Bbb H}$ is a strong Galois $H$-object because
$\gamma_{H}=(\mu_{H}\ot H)\co (H\ot \delta_{H})$ is an isomorphism
with inverse $\gamma_{H}^{-1}=((\mu_{H}\co (H\ot \lambda_{H}))\ot
H)\co (H\ot \delta_{H})$ and  $f_{H}=(\lambda_{H}\ot H)\co
\delta_{H}:H\rightarrow H^e$ is a morphism of unital magmas. }
\end{ej}

\begin{rem}
\label{Hopf-sgal=gal} {\rm If $H$ is a Hopf algebra and ${\Bbb
A}$ is a right $H$-comodule monoid, we say that ${\Bbb
A}$ is a Galois $H$-object when  $A$ is faithfully flat  and  the
canonical morphism $\gamma_{A}$ is an isomorphism. In this setting
every Galois $H$-object is a strong Galois $H$-object because
\begin{itemize}
\item[ ]$\hspace{0.38cm}\gamma_{A}\co \mu_{A^{e}}\co
((\gamma_{A}^{-1}\co (\eta_{A}\ot H))\ot (\gamma_{A}^{-1}\co (\eta_{A}\ot H)) )$
\item[ ]$=(\mu_{A}\ot H)\co (A\ot \mu_{A\ot H})\co (A\ot (\gamma_{A}\co \gamma_{A}^{-1}\co (\eta_{A}\ot H))
\ot \rho_{A})\co (c_{H,A}\ot A)\co (H\ot (\gamma_{A}^{-1}\co
(\eta_{A}\ot H)))$
\item[ ]$=(A\ot \mu_{H})\co (c_{H,A}\ot H)\co (H\ot (\gamma_{A}\co \gamma_{A}^{-1}\co (\eta_{A}\ot H))) $
\item[ ]$=\eta_{A}\ot \mu_{H} $
\item[ ]$= \gamma_{A}\co \gamma_{A}^{-1}\co (\eta_{A}\ot \mu_{H}) $
\end{itemize}
where the  equalities follow by (b2) of Definition
\ref{H-comodule-magma}, the naturality of $c$ and the associativity
of $\mu_{A}$. 
}
\end{rem}

\begin{prop}
\label{trivial-coinvaiants} Let $H$ be a Hopf quasigroup and  ${\Bbb
A}$ a Galois $H$-object. Then
\begin{equation}
\label{coinvariants=K}
 \setlength{\unitlength}{3mm}
\begin{picture}(30,4)
\put(3,2){\vector(1,0){4}} \put(11,2.5){\vector(1,0){10}}
\put(11,1.5){\vector(1,0){10}} \put(1,2){\makebox(0,0){$K$}}
\put(24,2){\makebox(0,0){$A\otimes H$}}
\put(5.5,3){\makebox(0,0){$\eta_{A}$}}
\put(16,3.5){\makebox(0,0){$\rho_{A}$}}
\put(16,0.5){\makebox(0,0){$A\ot \eta_{H}$}}
\put(9,2){\makebox(0,0){$A$}}
\end{picture}
\end{equation}
is an equalizer diagram.
\end{prop}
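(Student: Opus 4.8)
The plan is to realise the object claimed to be an equalizer as the object $A^{coH}$ of $H$-coinvariants of $A$, to identify $A^{coH}$ by means of the canonical isomorphism $\gamma_{A}$, and then to descend along the faithfully flat object $A$. To begin, I would fix the equalizer $(A^{coH}, i_{A}:A^{coH}\to A)$ of the pair $\rho_{A},\, A\ot\eta_{H}:A\rightrightarrows A\ot H$, which exists because ${\mathcal C}$ has equalizers, and establish two elementary facts. The first is that $\eta_{H}:K\to H$ is the equalizer of $\delta_{H},\, H\ot\eta_{H}:H\rightrightarrows H\ot H$: these are equalized by $\eta_{H}$ because $\delta_{H}$ is a morphism of unital magmas, and if $f$ satisfies $\delta_{H}\co f=(H\ot\eta_{H})\co f$ then $f=(\varepsilon_{H}\ot H)\co\delta_{H}\co f=\eta_{H}\co(\varepsilon_{H}\co f)$, a factorization that is unique since $\varepsilon_{H}\co\eta_{H}=id_{K}$. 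The second is that $\gamma_{A}$ is a morphism of right $H$-comodules from ${\Bbb A}\ot_{2}{\Bbb A}$ to ${\Bbb A}\ot_{2}{\Bbb H}$; the only identity needing a check, $(\gamma_{A}\ot H)\co(A\ot\rho_{A})=(A\ot\delta_{H})\co\gamma_{A}$, is a short computation from the $H$-comodule (coassociativity) axiom for $\rho_{A}$ together with the naturality of $c$.

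Next I would exploit that $A$ is flat, so that $A\ot -$ preserves equalizers. Then $A\ot i_{A}$ is the equalizer of the pair $\rho_{A\ot A}^{2}=A\ot\rho_{A}$ and $A\ot A\ot\eta_{H}$ on $A\ot A$, while, by the first fact above, $A\ot\eta_{H}:A\to A\ot H$ is the equalizer of the pair $\rho_{A\ot H}^{2}=A\ot\delta_{H}$ and $A\ot H\ot\eta_{H}$ on $A\ot H$. Since $\gamma_{A}$ is an isomorphism and, by the second fact together with the trivial identity $(\gamma_{A}\ot H)\co(A\ot A\ot\eta_{H})=(A\ot H\ot\eta_{H})\co\gamma_{A}$, it carries the first parallel pair onto the second, it restricts to an isomorphism $\phi:A\ot A^{coH}\to A$ characterized by $(A\ot\eta_{H})\co\phi=\gamma_{A}\co(A\ot i_{A})$. (An isomorphism intertwining two parallel pairs always induces, by the universal property, an isomorphism between their equalizers.)

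To finish, note that (b1) of Definition \ref{H-comodule-magma} gives $\rho_{A}\co\eta_{A}=\eta_{A}\ot\eta_{H}=(A\ot\eta_{H})\co\eta_{A}$, so $\eta_{A}$ factors through $i_{A}$ via a unique $j:K\to A^{coH}$ with $i_{A}\co j=\eta_{A}$; and a direct computation with (b1) and the unit axiom $\mu_{A}\co(A\ot\eta_{A})=id_{A}$ yields $\gamma_{A}\co(A\ot\eta_{A})=A\ot\eta_{H}$. Therefore $(A\ot\eta_{H})\co\phi\co(A\ot j)=\gamma_{A}\co(A\ot i_{A})\co(A\ot j)=\gamma_{A}\co(A\ot\eta_{A})=A\ot\eta_{H}$, and since $A\ot\eta_{H}$ is a split monomorphism (with retraction $A\ot\varepsilon_{H}$), we obtain $\phi\co(A\ot j)=id_{A}$, so that $A\ot j=\phi^{-1}$ is an isomorphism. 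Because $A$ is faithfully flat, $A\ot -$ reflects isomorphisms, hence $j:K\to A^{coH}$ is an isomorphism; together with $i_{A}\co j=\eta_{A}$ this shows that $\eta_{A}:K\to A$ is the equalizer of $\rho_{A}$ and $A\ot\eta_{H}$, which is the assertion.

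The genuinely routine parts are the two computations flagged above (the comodule identity for $\gamma_{A}$ and the formula $\gamma_{A}\co(A\ot\eta_{A})=A\ot\eta_{H}$). The point that requires care, and the only place where both flatness and faithfulness of $A$ are used, is the middle step: $\gamma_{A}$ turns the a priori opaque object $A\ot A^{coH}$ into $A$, and only then does faithfully flat descent force $A^{coH}\cong K$.
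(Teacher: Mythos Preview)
Your argument is correct, and it shares with the paper's proof the two essential ingredients: transporting an equalizer along the isomorphism $\gamma_{A}$, and descending along the faithfully flat $A$. The paper, however, organizes these steps differently and obtains a shorter proof. Rather than introducing the coinvariants $A^{coH}$ at all, the paper first observes that
\[
A\xrightarrow{\,A\ot\eta_{A}\,}A\ot A \rightrightarrows A\ot A\ot A
\]
(with parallel arrows $A\ot A\ot\eta_{A}$ and $A\ot\eta_{A}\ot A$) is a split equalizer, the splitting being given by $\mu_{A}$ and $\mu_{A}\ot A$; faithful flatness then yields directly that $\eta_{A}$ equalizes the pair $A\ot\eta_{A},\,\eta_{A}\ot A:A\rightrightarrows A\ot A$. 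Finally, the two one-line identities $\gamma_{A}\co(A\ot\eta_{A})=A\ot\eta_{H}$ and $\gamma_{A}\co(\eta_{A}\ot A)=\rho_{A}$ transport this equalizer to the one in the statement. So the paper descends first and then transports through $\gamma_{A}$ at the bottom level, while you transport through $\gamma_{A}$ at the $A$-tensored level (using that $\gamma_{A}$ is an $H$-comodule morphism and that $\eta_{H}$ equalizes $\delta_{H},H\ot\eta_{H}$) and only then descend. Your route has the mild advantage of isolating the general statement $A\ot A^{coH}\cong A$; the paper's route avoids the auxiliary object $A^{coH}$ entirely and needs only the two trivial identities above rather than the comodule-morphism computation for $\gamma_{A}$.
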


\begin{proof}
First note that
$$
\setlength{\unitlength}{3mm}
\begin{picture}(30,4)
\put(3,2){\vector(1,0){3.5}} \put(11,2.5){\vector(1,0){10}}
\put(11,1.5){\vector(1,0){10}} \put(1,2){\makebox(0,0){$A$}}
\put(25,2){\makebox(0,0){$A\ot A\otimes A$}}
\put(5,3){\makebox(0,0){$A\ot \eta_{A}$}}
\put(16,3.5){\makebox(0,0){$A\ot A\ot \eta_{A}$}}
\put(16,0.5){\makebox(0,0){$A\ot \eta_{A}\ot A$}}
\put(8.5,2){\makebox(0,0){$A\ot A$}}
\end{picture}
$$
is an equalizer diagram. Then, using that $A$  s faithfully flat, so
is
$$
\setlength{\unitlength}{3mm}
\begin{picture}(30,4)
\put(3,2){\vector(1,0){4}} \put(11,2.5){\vector(1,0){10}}
\put(11,1.5){\vector(1,0){10}} \put(1,2){\makebox(0,0){$K$}}
\put(24,2){\makebox(0,0){$A\otimes A$}}
\put(5.5,3){\makebox(0,0){$\eta_{A}$}}
\put(16,3.5){\makebox(0,0){$A\ot \eta_{A}$}}
\put(16,0.5){\makebox(0,0){$\eta_{A}\ot A$}}
\put(9,2){\makebox(0,0){$A$}}
\end{picture}
$$
 On the other hand, $\gamma_{A}\co (A\ot \eta_{A})=A\ot \eta_{H},\;$ $\gamma_{A}\co
(\eta_{A}\ot A)=\rho_{A}.$ Therefore, if $\gamma_{A}$ is an
isomorphism, (\ref{coinvariants=K}) is an equalizer diagram.

\end{proof}

\begin{lem}
\label{lemma-2}  Let $H$ be a Hopf quasigroup and let ${\Bbb
A}$ a Galois $H$-object. The following equalities hold:
\begin{itemize}
\item[(i)] $\rho_{A\ot A}^2\co \gamma_{A}^{-1}=(\gamma_{A}^{-1}\ot
H)\co (A\ot \delta_{H}).$
\item[(ii)] $\rho_{A\ot A}^1\co
\gamma_{A}^{-1}=(\gamma_{A}^{-1}\ot H)\co (A\ot c_{H,H})\co (A\ot
\mu_{H}\ot H)\co (\rho_{A}\ot ((\lambda_{H}\ot H)\co \delta_{H})).$
\end{itemize}

\end{lem}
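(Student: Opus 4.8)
The starting point is that, by (c2) of Definition \ref{Galois}, $\gamma_{A}$ is an isomorphism, so an identity between morphisms $A\ot H\rightarrow A\ot A\ot H$ may be checked after composing on the left with the isomorphism $\gamma_{A}\ot H$ and on the right with $\gamma_{A}$. Under this reduction, (i) is equivalent to
$$(\gamma_{A}\ot H)\co \rho_{A\ot A}^2=(A\ot \delta_{H})\co \gamma_{A}$$
and (ii) is equivalent to
$$(\gamma_{A}\ot H)\co \rho_{A\ot A}^1=(A\ot c_{H,H})\co (A\ot \mu_{H}\ot H)\co (\rho_{A}\ot ((\lambda_{H}\ot H)\co \delta_{H}))\co \gamma_{A},$$
now identities of morphisms $A\ot A\rightarrow A\ot H\ot H$. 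In both cases I would rewrite the left-hand side using $\gamma_{A}=(\mu_{A}\ot H)\co (A\ot \rho_{A})$ together with the explicit formulas $\rho_{A\ot A}^2=A\ot \rho_{A}$ and $\rho_{A\ot A}^1=(A\ot c_{H,A})\co (\rho_{A}\ot A)$ from Proposition \ref{tensor-product}.

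Part (i) is then immediate: the left-hand side equals $(\mu_{A}\ot H\ot H)\co (A\ot ((\rho_{A}\ot H)\co \rho_{A}))$, and replacing $(\rho_{A}\ot H)\co \rho_{A}$ by $(A\ot \delta_{H})\co \rho_{A}$ (coassociativity of the coaction) turns this into $(A\ot \delta_{H})\co (\mu_{A}\ot H)\co (A\ot \rho_{A})=(A\ot \delta_{H})\co \gamma_{A}$. So (i) merely records that $\gamma_{A}$ is compatible with the ``second'' coactions on $A\ot A$ and $A\ot H$.

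For (ii) more work is needed. I would expand the right-hand side; the factor $\rho_{A}$ there becomes precomposed with the multiplication $\mu_{A}$ coming from $\gamma_{A}$, producing $\rho_{A}\co \mu_{A}$, which by (b2) of Definition \ref{H-comodule-magma} equals $\mu_{A\ot H}\co (\rho_{A}\ot \rho_{A})$. Using this, together with coassociativity of $\rho_{A}$ and of $\delta_{H}$ and the naturality of $c$, one re-expresses everything through the iterated coaction $(A\ot ((\delta_{H}\ot H)\co \delta_{H}))\co \rho_{A}$ of the second variable. In Sweedler-type shorthand, applied to $a\ot a'$ the right-hand side then reads
$$\mu_{A}(a^{(0)}\ot a'^{(0)})\ \ot\ (a'^{(1)})_{(2)}\ \ot\ \mu_{H}(\mu_{H}(a^{(1)}\ot (a'^{(1)})_{(1)(1)})\ot \lambda_{H}((a'^{(1)})_{(1)(2)})),$$
whose first tensor factor already agrees with that of $(\gamma_{A}\ot H)\co \rho_{A\ot A}^1$. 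The decisive step is the antipode axiom (a2-2), in the form $\mu_{H}\co (\mu_{H}\ot \lambda_{H})\co (H\ot \delta_{H})=H\ot \varepsilon_{H}$, which collapses the last factor to $a^{(1)}\varepsilon_{H}((a'^{(1)})_{(1)})$; the counit property $(\varepsilon_{H}\ot H)\co \delta_{H}=id_{H}$ then absorbs the resulting $\varepsilon_{H}$ into the middle factor, yielding $\mu_{A}(a^{(0)}\ot a'^{(0)})\ot a'^{(1)}\ot a^{(1)}$, which is exactly $(\gamma_{A}\ot H)\co \rho_{A\ot A}^1$ evaluated on $a\ot a'$.

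The main obstacle is organisational rather than conceptual: since neither $\mu_{H}$ nor $\mu_{A}$ is associative, one must carefully track parenthesizations and choose the reassociations of the iterated coaction so that the composite that is produced is literally $\mu_{H}\co (\mu_{H}\ot \lambda_{H})\co (H\ot \delta_{H})$ applied to the correct arguments, so that (a2-2) applies verbatim. Carrying this out cleanly essentially forces the use of the string-diagram calculus, in which the only ingredients invoked are coassociativity of $\rho_{A}$ and of $\delta_{H}$, the naturality of $c$, (b2) of Definition \ref{H-comodule-magma}, axiom (a2-2), and the (co)unit axioms; in particular neither cocommutativity of $H$ nor faithful flatness of $A$ is needed here, only that $\gamma_{A}$ is an isomorphism.
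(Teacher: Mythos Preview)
Your argument is correct and follows essentially the same route as the paper: both parts are reduced, by pre- and post-composing with the isomorphism $\gamma_{A}$, to identities involving $\gamma_{A}$ rather than $\gamma_{A}^{-1}$, and part (ii) is then obtained from (b2), coassociativity of the coaction and of $\delta_{H}$, naturality of $c$, the antipode axiom (a2-2), and the counit axiom. The paper states the auxiliary identity with the braiding on the opposite side (its equation~(\ref{tech-ii})) and then uses the symmetry $c_{H,H}\co c_{H,H}=id_{H\ot H}$ to recover (ii), whereas your reformulation avoids that extra step; otherwise the two proofs coincide.
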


\begin{proof} The proof for (i) follows from the identity
$(\gamma_{A}\ot H)\co \rho_{A\ot A}^2=(A\ot \delta_{H})\co \gamma_{A}$.
To obtain (ii), first we prove that
\begin{equation}
\label{tech-ii} (A\ot \mu_{H}\ot H)\co (\rho_{A}\ot ((\lambda_{H}\ot
H)\co \delta_{H}))\co \gamma_{A}=(A\ot c_{H,H})\co (\gamma_{A}\ot
H)\co \rho_{A\ot A}^1.
\end{equation}
Indeed, 
\begin{itemize}
\item[ ]$\hspace{0.38cm}(A\ot \mu_{H}\ot H)\co (\rho_{A}\ot ((\lambda_{H}\ot
H)\co \delta_{H}))\co \gamma_{A}  $
\item[ ]$= (A\ot (\mu_H\circ(\mu_H\ot \lambda_H)\circ (H\ot
\delta_H))\ot H)\co (\mu_{A}\ot H\ot \delta_{H})\co (A\ot c_{H,A}\ot
H)\co (\rho_{A}\ot \rho_{A})$
\item[ ]$= (\mu_{A}\ot H\ot ((\varepsilon_{H}\ot H)\co \delta_{H}))\co (A\ot c_{H,A}\ot
H)\co (\rho_{A}\ot \rho_{A})$
\item[ ]$= (\mu_{A}\ot H \ot H)\co (A\ot c_{H,A}\ot
H)\co (\rho_{A}\ot \rho_{A})  $
\item[ ]$= (\mu_{A}\ot c_{H,H})\co (\rho_{A\ot A}^2\ot H)\co \rho_{A\ot A}^1$
\item[ ]$=(A\ot c_{H,H})\co (\gamma_{A}\ot
H)\co \rho_{A\ot A}^1.$
\end{itemize}
where the first equality follows by the comodule condition for $A$,
the second one by (a2-2) of Definition \ref{Hopf quasigroup}, the
third one by the counit condition, the fourth and the last ones by
the symmetry of $c$ and   the naturality of the braiding.

Then, by (\ref{tech-ii}) we obtain
\begin{itemize}
\item[ ]$\hspace{0.38cm}\rho_{A\ot A}^1\co
\gamma_{A}^{-1}  $
\item[ ]$= (\gamma_{A}^{-1}\ot H)\co (A\ot (c_{H,H}\co
c_{H,H}))\co (\gamma_{A}\ot H)\co \rho_{A\ot A}^1\co \gamma_{A}^{-1}$
\item[ ]$=(\gamma_{A}^{-1}\ot H)\co (A\ot c_{H,H})\co (A\ot
\mu_{H}\ot H)\co (\rho_{A}\ot ((\lambda_{H}\ot H)\co \delta_{H}))$
\end{itemize}
and (ii) holds.
\end{proof}

\begin{prop}
\label{product-Gal} Let $H$ be a cocommutative faithfully flat Hopf quasigroup.
The following assertions hold:
\begin{itemize}
\item[(i)] If ${\Bbb A}$ and ${\Bbb B}$ are Galois $H$-objects so is ${\Bbb
A}\bullet {\Bbb B}$.
\item[(ii)] If ${\Bbb A}$ and ${\Bbb B}$ are strong Galois $H$-objects so is ${\Bbb
A}\bullet {\Bbb B}$.
\end{itemize}
\end{prop}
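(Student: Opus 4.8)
Since a strong Galois $H$-object is in particular a Galois $H$-object and the adjective \emph{strong} only imposes a condition on the morphism $f$, the plan is to prove (i) first and then deduce (ii). For (i) one has to check the two conditions of Definition \ref{Galois} for ${\Bbb A}\bullet {\Bbb B}$. For the faithful flatness of $A\bullet B$ I would use the elementary remark that, when $A$ is faithfully flat, $A\ot-$ preserves \emph{and} reflects equalizers and reflects isomorphisms, so that for any object $M$ the faithful flatness of $A\ot M$ forces that of $M$; hence it is enough to prove that $A\ot(A\bullet B)$ is faithfully flat, and for this I would produce an isomorphism $A\ot(A\bullet B)\cong A\ot B$. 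Since $A$ is flat, $A\ot(A\bullet B)$ is the equalizer of $A\ot\rho^{1}_{A\ot B}$ and $A\ot\rho^{2}_{A\ot B}$; the isomorphism $\gamma_{A}\ot B\colon A\ot A\ot B\to A\ot H\ot B$, acting on the two leftmost tensorands, carries this parallel pair onto $A\ot\rho^{1}_{H\ot B}$, $A\ot\rho^{2}_{H\ot B}$ --- the check for $\rho^{2}$ is immediate and the one for $\rho^{1}$ uses only coassociativity of the comodule $A$ and the naturality of $c$. Thus $A\ot(A\bullet B)\cong A\ot(H\bullet B)$, and $H\bullet B\cong B$ by Propositions \ref{product-commutative} and \ref{product-unit}; as $A$ and $B$ are faithfully flat so is $A\ot B$, and therefore so is $A\bullet B$.

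For the bijectivity of $\gamma_{A\bullet B}$, write $i=i_{A\bullet B}$. From Proposition \ref{product} one computes
$$(i\ot H)\co\gamma_{A\bullet B}=(\mu_{A}\ot\gamma_{B})\co(A\ot c_{B,A}\ot B)\co(i\ot i),$$
and, using that $i$ factors through the defining equalizer,
$$(i\ot H)\co\gamma_{A\bullet B}=(A\ot c_{H,B})\co(\gamma_{A}\ot\mu_{B})\co(A\ot c_{B,A}\ot B)\co(i\ot i).$$
Guided by these, I would build the inverse explicitly: let $g\colon(A\bullet B)\ot H\to(A\bullet B)\ot(A\bullet B)$ be the factorization through $i\ot i$ of the morphism $\omega\co(\gamma_{A}^{-1}\ot\gamma_{B}^{-1})\co\omega'\co(i\ot\delta_{H})$, where $\omega'$ moves $A\ot B\ot H\ot H$ into $(A\ot H)\ot(B\ot H)$ and $\omega$ moves $(A\ot A)\ot(B\ot B)$ into $(A\ot B)\ot(A\ot B)$, both by means of the symmetry $c$. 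One must first verify that this composite does factor through $i\ot i$, that is, that each of its two legs takes values in the equalizer $A\bullet B$; this is where Lemma \ref{lemma-2} (both parts), the antipode axioms of Definition \ref{Hopf quasigroup} together with $\lambda_{H}\co\lambda_{H}=id_{H}$, cocommutativity of $H$, and the equalizer relation satisfied by $i$ are used. Granting this, $\gamma_{A\bullet B}\co g=id$ follows from $\gamma_{B}\co\gamma_{B}^{-1}=id$ and from $\mu_{A}\co\gamma_{A}^{-1}=A\ot\varepsilon_{H}$ (immediate from $\gamma_{A}\co\gamma_{A}^{-1}=id$ and $(A\ot\varepsilon_{H})\co\gamma_{A}=\mu_{A}$), using that $i\ot H$ is a monomorphism; and $g\co\gamma_{A\bullet B}=id$ follows symmetrically from the second formula above and the fact that $i\ot i$ is a monomorphism. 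This settles (i).

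For (ii), ${\Bbb A}\bullet {\Bbb B}$ is already a Galois $H$-object by (i), so it remains only to see that $f_{A\bullet B}=\gamma_{A\bullet B}^{-1}\co(\eta_{A\bullet B}\ot H)\colon H\to(A\bullet B)^{e}$ is a morphism of unital magmas. Using the explicit inverse $g=\gamma_{A\bullet B}^{-1}$ from (i) and $i\co\eta_{A\bullet B}=\eta_{A}\ot\eta_{B}$, one obtains $(i\ot i)\co f_{A\bullet B}=\omega\co(f_{A}\ot f_{B})\co\delta_{H}$, where $\omega$ is now read as a morphism $A^{e}\ot B^{e}\to(A\ot B)^{e}$; a short computation with $\mu_{(-)^{e}}$ and the naturality of $c$ shows that this $\omega$ is a morphism of unital magmas. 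Since $\delta_{H}$ is a morphism of unital magmas by axiom (a1) of Definition \ref{Hopf quasigroup} and $f_{A}$, $f_{B}$ are morphisms of unital magmas by the strong hypothesis on ${\Bbb A}$ and ${\Bbb B}$, the composite $\omega\co(f_{A}\ot f_{B})\co\delta_{H}$ is a morphism of unital magmas $H\to(A\ot B)^{e}$; and since $i\ot i\colon(A\bullet B)^{e}\to(A\ot B)^{e}$ is both a morphism of unital magmas and a monomorphism (because $A\bullet B$ and $A\ot B$ are flat, $i\ot i$ being a composite of equalizer morphisms), $f_{A\bullet B}$ itself is a morphism of unital magmas and ${\Bbb A}\bullet {\Bbb B}$ is strong.

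The step I expect to be the real obstacle is the well-definedness of $g$ in (i): proving that $\omega\co(\gamma_{A}^{-1}\ot\gamma_{B}^{-1})\co\omega'\co(i\ot\delta_{H})$ factors through the double equalizer $i\ot i$ is the only point where the non-associativity of the magmas genuinely intervenes, and it requires combining both formulas of Lemma \ref{lemma-2}, the anti(co)multiplicativity and involutivity of $\lambda_{H}$, cocommutativity of $H$, and the equalizer relation defining $A\bullet B$. Everything after that --- the two closed forms for $(i\ot H)\co\gamma_{A\bullet B}$, the identities $\gamma_{A\bullet B}\co g=id=g\co\gamma_{A\bullet B}$, and the computation in (ii) --- is formal bookkeeping with $\gamma_{A}^{-1}$, $\gamma_{B}^{-1}$, the counit, and the monomorphisms $i\ot H$ and $i\ot i$.
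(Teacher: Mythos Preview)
Your proposal is correct and, for the inverse of $\gamma_{A\bullet B}$ and for part (ii), follows essentially the paper's route: the paper also builds the inverse from $\Gamma_{A\ot B}^{-1}\co(i_{A\bullet B}\ot\delta_{H})$ (your $\omega\co(\gamma_{A}^{-1}\ot\gamma_{B}^{-1})\co\omega'\co(i\ot\delta_{H})$), factors it through the equalizers using both parts of Lemma~\ref{lemma-2} together with cocommutativity, and then checks the two composites are identities exactly as you outline; the only organisational difference is that the paper performs the factorization in two steps (first through $A\ot B\ot i_{A\bullet B}$, then through $i_{A\bullet B}\ot(A\bullet B)$) rather than through $i\ot i$ in one go, and for (ii) the paper likewise reduces $(i\ot i)\co f_{A\bullet B}$ to $(A\ot c_{A,B}\ot B)\co(f_{A}\ot f_{B})\co\delta_{H}$.

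Where you genuinely diverge is in the faithful-flatness step. The paper tensors the defining equalizer of $A\bullet B$ by $A\ot B$ and uses the full isomorphism $\Gamma_{A\ot B}$ (built from \emph{both} $\gamma_{A}$ and $\gamma_{B}$) to identify $A\ot B\ot(A\bullet B)$ with $A\ot B\ot H$. Your argument is leaner: tensoring only by $A$ and using only $\gamma_{A}\ot B$ to carry the parallel pair $(A\ot\rho^{1}_{A\ot B},A\ot\rho^{2}_{A\ot B})$ to $(A\ot\rho^{1}_{H\ot B},A\ot\rho^{2}_{H\ot B})$ --- the key identity $(A\ot\delta_{H})\co\gamma_{A}=(\gamma_{A}\ot H)\co(A\ot\rho_{A})$ being immediate from the comodule axiom --- yields $A\ot(A\bullet B)\cong A\ot(H\bullet B)\cong A\ot B$, and then faithful flatness of $A$ descends faithful flatness from $A\ot B$ to $A\bullet B$. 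This saves one Galois hypothesis at that stage and avoids introducing $\Gamma_{A\ot B}$ twice; the paper's version, on the other hand, reuses the very same morphism $\Gamma_{A\ot B}^{-1}$ for both the faithful-flatness argument and the construction of $\gamma_{A\bullet B}^{-1}$, so it is more economical in \emph{notation} even if slightly heavier in \emph{hypotheses}.
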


\begin{proof} First we prove (i). Let ${\Bbb A}$ and ${\Bbb B}$ be
Galois $H$-objects. By Proposition \ref{product} we know that
$A\bullet B$ is a unital magma where $\eta_{A\bullet B}$ and
$\mu_{A\bullet B}$ are the factorizations through $i_{A\bullet B}$
of the morphisms $\eta_{A\ot B}$ and $\mu_{A\ot B}\co (i_{A\bullet
B}\ot i_{A\bullet B})$ respectively. Moreover, using that $H$ is
flat we have that $A\bullet B$ is a right $H$-comodule magma where
the coaction $\rho_{A\bullet B}:A\bullet B\rightarrow A\bullet B\ot
H$ is the factorization of $\rho_{A\otimes B}^2\co i_{A\bullet B}$
(or $\rho_{A\otimes B}^1\co i_{A\bullet B}$) through $i_{A\bullet
B}\ot H$.

The objects $A$ and $B$ are faithfully flat and then so is $A\ot B$. Therefore
$$
 \setlength{\unitlength}{3mm}
\begin{picture}(45,4)
\put(11,2){\vector(1,0){6}} \put(27,2.5){\vector(1,0){7}}
\put(27,1.5){\vector(1,0){7}} \put(6,2){\makebox(0,0){$A\ot B\ot
A\bullet B$}} \put(22,2){\makebox(0,0){$A\ot B\ot A\ot B$}}
\put(40,2){\makebox(0,0){$A\ot B\ot A\ot B \otimes H$}}
\put(14,3){\makebox(0,0){$\scriptstyle A\ot B\ot i_{A \bullet B}$}}
\put(30,3.5){\makebox(0,0){$\scriptstyle A\ot B\ot  \rho_{A\ot
B}^1$}} \put(30,0.5){\makebox(0,0){$\scriptstyle A\ot B\ot
\rho_{A\ot B}^2$}}
\end{picture}
$$
is an equalizer diagram. On the other hand, if $H$ is cocommutative
$$
 \setlength{\unitlength}{3mm}
\begin{picture}(45,4)
\put(11,2){\vector(1,0){6}} \put(27,2.5){\vector(1,0){7}}
\put(27,1.5){\vector(1,0){7}} \put(6,2){\makebox(0,0){$A\ot B\ot
H$}} \put(22,2){\makebox(0,0){$A\ot B\ot H\ot H$}}
\put(40,2){\makebox(0,0){$A\ot B\ot H\ot H \otimes H$}}
\put(14,3){\makebox(0,0){$\scriptstyle A\ot B\ot \delta_H$}}
\put(30,3.5){\makebox(0,0){$\scriptstyle A\ot B\ot  \rho_{H\ot
H}^1$}} \put(30,0.5){\makebox(0,0){$\scriptstyle A\ot B\ot
\rho_{H\ot H}^2$}}
\end{picture}
$$
 is an equalizer diagram (see Remark \ref{rem-product-unit}).

Let $\Gamma_{A\ot B}:A\ot B\ot A\ot B\rightarrow A\ot B\ot H\ot H$
be the morphism defined by
$$\Gamma_{A\ot B}=(A\ot c_{H,B}\ot H)\co (\gamma_{A}\ot
\gamma_{B})\co (A\ot c_{B,A}\ot B).$$ Trivially $\Gamma_{A\ot B}$ is
an isomorphism wit inverse
$$\Gamma_{A\ot B}^{-1}=(A\ot c_{A,B}\ot B)\co (\gamma_{A}^{-1}\ot
\gamma_{B}^{-1})\co (A\ot c_{B,H}\ot H)$$ and satifies
\begin{itemize}
\item[ ]$\hspace{0.38cm}( A\ot B\ot  \rho_{H\ot
H}^1)\co \Gamma_{A\ot B}\co (A\ot B\ot i_{A \bullet B})  $
\item[ ]$= (\mu_{A\ot B}\ot \rho_{H\ot H}^1 )\co (A\ot B\ot (((\rho_{A\otimes B}^1\co i_{A\bullet B})\ot H)\co
\rho_{A\bullet B})) $
\item[ ]$= (\mu_{A\ot B}\ot \rho_{H\ot H}^1 )\co (A\ot B\ot ((i_{A\bullet B}\ot H\ot H)\co
(\rho_{A\bullet B}\ot H)\co \rho_{A\bullet B}))  $
\item[ ]$= (\mu_{A\ot B}\ot H\ot (c_{H,H}\co \delta_{H}))\co (A\ot B\ot
((i_{A\bullet B}\ot \delta_{H})\co \rho_{A\bullet B}))  $
\item[ ]$= (\mu_{A\ot B}\ot H\ot \delta_{H})\co (A\ot B\ot ((i_{A\bullet B}\ot H\ot H)\co
(\rho_{A\bullet B}\ot H)\co \rho_{A\bullet B}))  $
\item[ ]$=  ( A\ot B\ot  \rho_{H\ot
H}^2)\co \Gamma_{A\ot B}\co (A\ot B\ot i_{A \bullet B})  $
\end{itemize}
where the first equality follows by the naturality of $c$ and the
properties of $\rho_{A\bullet B}$, the second and the third ones by
the comodule structure of $A\bullet B$, the fourth one by the
cocommutativity of $H$ and the last one was obtained repeating the
same calculus with $\rho_{H\ot H}^2$.

As a consequence, there exists a unique morphism $h:A\ot B\ot
A\bullet B\rightarrow A\ot B\ot H$ such that
\begin{equation}
\label{ff-gal-1} (A\ot B\ot \delta_H)\co h=\Gamma_{A\ot B}\co (A\ot
B\ot i_{A \bullet B}).
\end{equation}

On the other hand, in an analogous way the morphism $\Gamma_{A\ot B}^{-1}\co (A\ot B\ot
\delta_{H}):A\ot B\ot H\rightarrow A\ot B\ot A\ot B$ factorizes through 
through the equalizer $A\ot B\ot i_{A\bullet B}$ because by (i) of
Lemma \ref{lemma-2}, the naturality and symmetry of $c$ and the cocommutativity of $H$ we have
\begin{itemize}
\item[ ]$\hspace{0.38cm}( A\ot B\ot  \rho_{A\ot
B}^1)\co \Gamma_{A\ot B}^{-1}\co (A\ot B\ot \delta_{H})  $
\item[ ]$= (A\ot c_{A,B}\ot c_{H,B})\co (A\ot A\ot c_{H,B}\ot B)\co (((A\ot \rho_{A})\co \gamma_{A}^{-1})
\ot \gamma_{B}^{-1})\co (A\ot c_{B,H}\ot H)\co (A\ot B\ot
\delta_{H}) $
\item[ ]$=(A\ot c_{A,B}\ot c_{H,B})\co (A\ot A\ot c_{H,B}\ot B)\co (((\gamma_{A}^{-1}\ot
H)\co (A\ot \delta_{H})) \ot \gamma_{B}^{-1})\co (A\ot c_{B,H}\ot
H)\co (A\ot B\ot \delta_{H})   $
\item[ ]$=(A\ot c_{A,B}\ot B\ot H)\co (\gamma_{A}^{-1}\ot ((\gamma_{B}^{-1}\ot
H)\co (B\ot \delta_{H})))\co (A\ot c_{B,H}\ot H)\co (A\ot B\ot
\delta_{H}) $
\item[ ]$= ( A\ot B\ot  \rho_{A\ot
B}^2)\co \Gamma_{A\ot B}^{-1}\co (A\ot B\ot \delta_{H})    $
\end{itemize}
Thus, let $g$ be the unique morphism such that
\begin{equation}
\label{ff-gal-2} (A\ot B\ot i_{A \bullet B})\co g=\Gamma_{A\ot
B}^{-1}\co (A\ot B\ot \delta_{H}).
\end{equation}
By (\ref{ff-gal-1}) and (\ref{ff-gal-2})
$$(A\ot B\ot \delta_{H})\co h\co g=\Gamma_{A\ot B}\co (A\ot
B\ot i_{A \bullet B})\co g=\Gamma_{A\ot B}\co \Gamma_{A\ot
B}^{-1}\co (A\ot B\ot \delta_{H})=A\ot B\ot \delta_{H},$$
$$(A\ot B\ot i_{A \bullet B})\co g\co h=\Gamma_{A\ot
B}^{-1}\co (A\ot B\ot \delta_{H})\co h=\Gamma_{A\ot
B}^{-1}\co\Gamma_{A\ot B}\co (A\ot B\ot i_{A \bullet B})=A\ot B\ot
i_{A \bullet B}$$ and then we obtain that $h$ is an isomorphism with
inverse $g$. As a consequence $A\bullet B$ is faithfully flat because $A$, $B$ and $H$ are faithfully flat.

The morphism $\Gamma_{A\ot B}^{-1}\co (i_{A\bullet B}\ot
\delta_{H}):A\bullet B\ot H\rightarrow A\ot B\ot A\ot B$ admits a
factorization $\alpha_{A,B}:A\bullet B\ot H\rightarrow A\ot B\ot
A\bullet B$ through the equalizer $A\ot B\ot i_{A\bullet B}$ because
as we saw in the previous lines $\Gamma_{A\ot B}^{-1}\co (A\ot B\ot \delta_{H})$ admits a
factorization through $A\ot B\ot i_{A\bullet B}$.

Now consider the equalizer diagram
$$
 \setlength{\unitlength}{3mm}
\begin{picture}(45,4)
\put(11,2){\vector(1,0){6}} \put(27,2.5){\vector(1,0){7}}
\put(27,1.5){\vector(1,0){7}} \put(6,2){\makebox(0,0){$A\bullet B\ot
A\bullet B$}} \put(22,2){\makebox(0,0){$A\ot B\ot A\bullet B$}}
\put(40,2){\makebox(0,0){$A\ot B\ot H\ot A\bullet B$}}
\put(14,3){\makebox(0,0){$\scriptstyle i_{A\bullet B}\ot A\bullet
B$}} \put(30,3.5){\makebox(0,0){$\scriptstyle \rho_{A\ot B}^1\ot
A\bullet B$}} \put(30,0.5){\makebox(0,0){$\scriptstyle
\rho_{A\ot B}^2\ot A\bullet B$}}
\end{picture}
$$
We have that
\begin{itemize}
\item[ ]$\hspace{0.38cm} (\rho_{A\ot B}^1\ot
i_{A\bullet B})\co \alpha_{A,B} $
\item[ ]$=(\rho_{A\ot B}^{1}\ot A\ot B)\co (A\ot c_{A,B}\ot B)\co (\gamma_{A}^{-1}\ot \gamma_{B}^{-1})\co
(A\ot c_{B,H}\ot H)\co ( i_{A\bullet B}\ot \delta_{H})   $
\item[ ]$= (A\ot ((B\ot c_{A,H})\co (c_{A,B}\ot H)\co (A\ot c_{H,B}))\ot B)  $
\item[ ]$\hspace{0.38cm}\co (((\rho_{A\ot H}^{1}\co
\gamma_{A}^{-1})\ot \gamma_{B}^{-1})) \co (A\ot c_{B,H}\ot H)\co (
i_{A\bullet B}\ot \delta_{H})$
\item[ ]$= (A\ot ((B\ot c_{A,H})\co (c_{A,B}\ot H)\co (A\ot c_{H,B}))\ot B) $
\item[ ]$\hspace{0.38cm}\co (((\gamma_{A}^{-1}\ot H)\co (A\ot c_{H,H})\co (A\ot \mu_{H}\ot H)\co
(\rho_{A}\ot ((\lambda_{H}\ot H)\co \delta_{H})))\ot
\gamma_{B}^{-1})\co (A\ot c_{B,H}\ot H)\co ( i_{A\bullet B}\ot
\delta_{H})$
\item[ ]$= (A\ot ((B\ot c_{A,H})\co (c_{A,B}\ot H)\co (A\ot c_{H,B}))\ot B)\co
(\gamma_{A}^{-1}\ot H\ot  \gamma_{B}^{-1})  $
\item[ ]$\hspace{0.38cm}\co (A\ot ((c_{H,H}\ot B)\co (\mu_{H}\ot
c_{B,H})\co (H\ot c_{B,H}\ot H)\co (c_{B,H}\ot ((\lambda_{H}\ot
H)\co \delta_{H})))\ot H)$
\item[ ]$\hspace{0.38cm} \co((\rho_{A\ot B}^1\co i_{A\bullet B})\ot
\delta_{H})$
\item[ ]$= (A\ot ((B\ot c_{A,H})\co (c_{A,B}\ot H)\co (A\ot c_{H,B}))\ot B)\co
(\gamma_{A}^{-1}\ot H\ot B\ot B)   $
\item[ ]$\hspace{0.38cm} \co (A\ot ((c_{H,H}\ot \gamma_{B}^{-1})\co (H\ot c_{B,H}\ot H)\co (c_{B,H}\ot H\ot H)\co (B\ot
((\mu_{H}\co (H\ot \lambda_{H}))\ot H)\ot H)))   $
\item[ ]$\hspace{0.38cm}\co ((\rho_{A\ot B}^2\co i_{A\bullet B})\ot
((\delta_{H}\ot H)\co \delta_{H}))$
\item[ ]$= (A\ot ((c_{H,B}\ot A)\co (H\ot c_{A,B})\co (c_{A,H}\ot B))\ot B)\co
(\gamma_{A}^{-1}\ot H\ot \gamma_{B}^{-1})   $
\item[ ]$\hspace{0.38cm}\co (A\ot ((H\ot c_{B,H}\ot H)\co (c_{B,H}\ot (c_{H,H}\co c_{H,H}))\co (B\ot c_{H,H}\ot H)))
\co (A\ot B\ot \mu_{H}\ot (c_{H,H}\co \delta_{H}))   $
\item[ ]$\hspace{0.38cm} \co ((\rho_{A\ot B}^2\co i_{A\bullet B})\ot
((\lambda_{H}\ot H)\co \delta_{H}))$
\item[ ]$= (A\ot ((B \ot c_{A,H}\ot B)\co (c_{A,B}\ot c_{B,H}))\co (\gamma_{A}^{-1}\ot
\gamma_{B}^{-1}\ot H)\co (A\ot c_{B,H}\ot c_{H,H})$
\item[ ]$\hspace{0.38cm}\co (A\ot B\ot
(c_{H,H}\co (\mu_{H}\ot H))\ot H)  \co ( ((A\ot \rho_{B})\co
i_{A\bullet B})\ot ((\lambda_{H}\ot (c_{H,H}\co\delta_{H}))\co \delta_{H}))$
\item[ ]$=(A\ot ((B \ot c_{A,H}\ot B)\co (c_{A,B}\ot c_{B,H})) )  $
\item[ ]$\hspace{0.38cm}  \co (\gamma_{A}^{-1}\ot ((\gamma_{B}^{-1}\ot H)\co (B\ot c_{H,H})\co (B\ot
\mu_{H}\ot H)\co (\rho_{B}\ot ((\lambda_{H}\ot H)\co \delta_{H})) ))
$
\item[ ]$\hspace{0.38cm} \co (A\ot c_{B,H}\ot H)\co (i_{A\bullet B}\ot
(c_{H,H}\co \delta_{H}))$
\item[ ]$=(A\ot  ((B\ot c_{A,H})\co (c_{A,B}\ot H)\co (A\ot
\rho_{B}))\ot B)\co (\gamma_{A}^{-1}\ot  \gamma_{B}^{-1})\co
(A\ot c_{B,H}\ot H)\co (i_{A\bullet B}\ot \delta_{H})$
\item[ ]$ =(\rho_{A\ot B}^2\ot
i_{A\bullet B})\co \alpha_{A,B} $
\end{itemize}
where the first equality follows by the definition, the second, the
fourth and the fifth ones  by the naturality and symmetry of $c$,
the third and the nineth ones by (ii) of Lemma \ref{lemma-2},  the
sixth one by the cocommutativity of $H$ and, finally,  the eighth
and the tenth  ones by the naturality of $c$.

Then, there exists a unique morphism $\beta_{A,B}:A\bullet B\ot
H\rightarrow A\bullet B\ot A\bullet B$ such that
\begin{equation}
\label{inv-can} (i_{A\bullet B}\ot A\bullet B)\co
\beta_{A,B}=\alpha_{A,B}
\end{equation}
and then
\begin{equation}
\label{inv-can-1} (i_{A\bullet B}\ot i_{A\bullet B})\co
\beta_{A,B}=\Gamma_{A\ot B}^{-1}\co (i_{A\bullet B}\ot \delta_{H})
\end{equation}

The morphism $\beta_{A,B}$ satisfies
\begin{itemize}
\item[ ]$\hspace{0.38cm} (i_{A\bullet B}\ot H)\co \gamma_{A\bullet B}\co \beta_{A,B} $
\item[ ]$= (\mu_{A\ot B}\ot H)\co (i_{A\bullet B} \ot (  (i_{A\bullet B}\ot H)\co
\rho_{A\bullet B}))\co \beta_{A,B}$
\item[ ]$= (\mu_{A\ot B}\ot H)\co (i_{A\bullet B} \ot (  (A\ot \rho_{B})\co i_{A\bullet B} ))\co \beta_{A,B}   $
\item[ ]$=(\mu_{A}\ot \gamma_{B})\co (A\ot c_{B,A}\ot B)\co \Gamma_{A\ot B}^{-1}\co
(i_{A\bullet B}\ot \delta_{H})   $
\item[ ]$= (((A\ot \varepsilon_{H})\co \gamma_{A}\co
\gamma_{A}^{-1})\ot B\ot H)\co (A\ot c_{B,H}\ot H)\co (i_{A\bullet
B}\ot \delta_{H})    $
\item[ ]$=i_{A\bullet B}\ot H$
\end{itemize}
and by the cocommutativity of $H$ we have
\begin{itemize}
\item[ ]$\hspace{0.38cm} (i_{A\bullet B}\ot i_{A\bullet B})\co \beta_{A,B}\co \gamma_{A\bullet B}  $
\item[ ]$= \Gamma_{A\ot B}^{-1}\co (i_{A\bullet B}\ot \delta_{H})\co (\mu_{A\bullet B}\ot H)\co ( A\bullet B
\ot \rho_{A\bullet B})   $
\item[ ]$=  \Gamma_{A\ot B}^{-1}\co (\mu_{A\ot B}\ot \delta_{H})\co
(i_{A\bullet B}\ot ((A\ot \rho_{B})\co i_{A\bullet B}))   $
\item[ ]$= \Gamma_{A\ot B}^{-1}\co (A\ot B\ot c_{H,H})\co (\mu_{A}\ot \gamma_{B}\ot H)\co (A\ot c_{B,A}\ot \rho_{B})\co
(i_{A\bullet B}\ot i_{A\bullet B})      $
\item[ ]$= (A\ot c_{A,B}\ot B)\co (\gamma_{A}^{-1}\ot B\ot B)\co (A\ot c_{B,H}\ot B)\co (\mu_{A}\ot B\ot (c_{B,H}\co\rho_{B})) \co
(A\ot c_{B,A}\ot B)\co  (i_{A\bullet B}\ot i_{A\bullet B})   $
\item[ ]$=  (A\ot c_{A,B}\ot B)\co (\gamma_{A}^{-1}\ot B\ot B)\co
(\mu_{A}\ot c_{B,H}\ot B)\co 
(A\ot c_{B,A}\ot H\ot  B) \co  (i_{A\bullet B}\ot ((\rho_{A}\ot B)\co
i_{A\bullet B})) $
\item[ ]$=(((A\ot c_{A,B})\co ((\gamma_{A}^{-1}\co\gamma_{A})\ot
B))\ot B)\co (A\ot c_{B,A}\ot B)\co (i_{A\bullet B}\ot i_{A\bullet
B}) $
\item[ ]$=i_{A\bullet B}\ot i_{A\bullet
B}     $
\end{itemize}
Taking into account that $H$ is flat and that $A\bullet B$ is faithfully flat
we obtain that $\beta_{A,B}$ is the inverse of the canonical
morphism $\gamma_{A\bullet B}$.

Now we assume that ${\Bbb A}$ and ${\Bbb B}$ are strong Galois $H$-objects. To prove that ${\Bbb A}\bullet {\Bbb B}$ is a strong Galois $H$-object we only need to show that $f_{A\bullet
B}:H\rightarrow (A\bullet B)^e$ is a morphism of unital magmas. If
$f_{A}$ and $f_{B}$ are morphisms of unital magmas, by the
properties of $i_{A\bullet B}$ and the naturality of $c$ we have
$$(i_{A\bullet B}\ot i_{A\bullet B})\co f_{A\bullet
B}\co  \eta_{H}=(A\ot c_{A,B}\ot B)\co ((f_{A}\co \eta_{H})\ot
 (f_{B}\co \eta_H))$$
 $$=\eta_{A}\ot \eta_{B}\ot \eta_{A}\ot \eta_{B}=(i_{A\bullet B}\ot i_{A\bullet B})\co
\eta_{(A\bullet B)^e} $$ and
$$(i_{A\bullet B}\ot i_{A\bullet B})\co \mu_{(A\bullet B)^e}\co (f_{A\bullet
B}\ot f_{A\bullet B})=(A\ot c_{A,B}\ot B)\co ((\mu_{A^e}\co
(f_{A}\ot f_{A}))\ot (\mu_{B^e}\co (f_{B}\ot f_{B}))\co \delta_{H\ot
H}$$
$$= (A\ot c_{A,B}\ot B)\co (f_{A}\ot f_{B})\co \delta_{H}\co \mu_{H}= (i_{A\bullet B}\ot i_{A\bullet B})\co
f_{A\bullet B}\co \mu_{H}.$$

Therefore, $f_{A\bullet B}\co  \eta_{H}=\eta_{(A\bullet B)^e}$ and
$\mu_{(A\bullet B)^e}\co (f_{A\bullet B}\ot f_{A\bullet
B})=f_{A\bullet B}\co \mu_{H}$.
\end{proof}

\begin{prop}
\label{op-Gal} Let $H$ be a cocommutative  Hopf quasigroup and
${\Bbb A}$  a Galois $H$-object. Then the right
$H$-comodule magma $\overline{{\Bbb A}}$ defined in Proposition \ref
{op-magma} is a Galois $H$-object. Moreover, if ${\Bbb A}$ is
strong so is $\overline{{\Bbb A}}$.

\end{prop}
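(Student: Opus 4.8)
The plan is to verify conditions (c1) and (c2) of Definition \ref{Galois} for $\overline{{\Bbb A}}$, and then, under the strong hypothesis, to check that $f_{\overline{A}}$ is a morphism of unital magmas. Condition (c1) is immediate: the underlying object of $\overline{A}$ is $A$, which is faithfully flat by hypothesis.

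For (c2) the first step is to rewrite the canonical morphism of $\overline{{\Bbb A}}$. Using $\mu_{\overline{A}}=\mu_{A}\co c_{A,A}$, the coaction $\rho_{\overline{A}}=(A\ot \lambda_{H})\co \rho_{A}$ of Proposition \ref{op-magma} and the naturality of $c$, one obtains
$$\gamma_{\overline{A}}=(A\ot \lambda_{H})\co \widetilde{\gamma}_{A}\co c_{A,A},\qquad \widetilde{\gamma}_{A}:=(\mu_{A}\ot H)\co (A\ot c_{A,H})\co (\rho_{A}\ot A).$$
Since $c_{A,A}$ is an isomorphism and, $H$ being cocommutative, $\lambda_{H}\co \lambda_{H}=id_{H}$ so that $A\ot \lambda_{H}$ is an isomorphism, it is enough to prove that the ``left'' canonical morphism $\widetilde{\gamma}_{A}$ is an isomorphism. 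The key point here is that $\widetilde{\gamma}_{A}=(\mu_{A}\ot H)\co \rho_{A\ot A}^{1}$, where $\rho_{A\ot A}^{1}$ is the coaction of Proposition \ref{tensor-product} for ${\Bbb A}\ot_{1}{\Bbb A}$; hence (ii) of Lemma \ref{lemma-2} applies and produces an explicit expression for $\widetilde{\gamma}_{A}\co \gamma_{A}^{-1}$. Guided by that expression one writes down a candidate inverse of $\widetilde{\gamma}_{A}$ (equivalently, of $\gamma_{\overline{A}}$) built out of $\gamma_{A}^{-1}$, $\lambda_{H}$, $\mu_{H}$ and $\delta_{H}$, and verifies that the two composites are identities by a diagram chase using (a2-1), (a2-2) of Definition \ref{Hopf quasigroup}, the identity (\ref{antipode}), the comodule-magma axioms (b1), (b2) of Definition \ref{H-comodule-magma}, the cocommutativity of $H$ and the naturality of $c$. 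This last verification is of the same nature and length as the one carried out for $\gamma_{A\bullet B}$ in the proof of Proposition \ref{product-Gal}, and it is the main obstacle of the statement.

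Suppose now that ${\Bbb A}$ is strong, i.e. $f_{A}=\gamma_{A}^{-1}\co (\eta_{A}\ot H):H\rightarrow A^{e}=\overline{A}\ot A$ is a morphism of unital magmas; one must show that $f_{\overline{A}}=\gamma_{\overline{A}}^{-1}\co (\eta_{A}\ot H):H\rightarrow \overline{A}^{e}=\overline{\overline{A}}\ot \overline{A}$ is one too. From the description of $\gamma_{\overline{A}}^{-1}$ found above, $f_{\overline{A}}$ can be rewritten in terms of $f_{A}$, the braiding $c$ and the antipode $\lambda_{H}$; concretely, $f_{\overline{A}}=c_{A,A}\co \widetilde{\gamma}_{A}^{\,-1}\co (\eta_{A}\ot H)\co \lambda_{H}$. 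Preservation of the unit is clear from $f_{A}\co \eta_{H}=\eta_{A}\ot \eta_{A}$, $\lambda_{H}\co \eta_{H}=\eta_{H}$ by (\ref{fixed}) and the naturality of $c$. For multiplicativity one uses that $c$ induces isomorphisms of unital magmas between the relevant tensor-product magmas (as in Propositions \ref{tensor-product} and \ref{product-commutative}) and that $\lambda_{H}$ is antimultiplicative by (\ref{anti}); then the equality $\mu_{\overline{A}^{e}}\co (f_{\overline{A}}\ot f_{\overline{A}})=f_{\overline{A}}\co \mu_{H}$ reduces, after these rewritings, to the hypothesis that $f_{A}$ is multiplicative, the reduction being a diagram chase parallel to the verification that $f_{A\bullet B}$ is multiplicative in the proof of Proposition \ref{product-Gal} and using again (b1), (b2), (\ref{anti}), the cocommutativity of $H$ and the naturality of $c$. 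Therefore $f_{\overline{A}}$ is a morphism of unital magmas and $\overline{{\Bbb A}}$ is a strong Galois $H$-object. The place where I expect the real work to be is the explicit inversion of $\widetilde{\gamma}_{A}$ in step (c2); once that formula is in hand, the strong part is routine bookkeeping with the braiding and the antipode.
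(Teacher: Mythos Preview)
Your plan is correct and, for (c2), is essentially the paper's approach: relate $\gamma_{\overline{A}}$ to $\gamma_{A}$ and invert. The paper does this a bit more concretely than you do: instead of isolating $\widetilde{\gamma}_{A}$, it proves the single identity
\[
(A\ot (\mu_{H}\co c_{H,H}\co (\lambda_{H}\ot H)))\co (\rho_{A}\ot H)\co \gamma_{A}=\gamma_{\overline{A}}\co c_{A,A},
\]
writes down the candidate inverse $\gamma_{\overline{A}}^{\prime}=c_{A,A}\co \gamma_{A}^{-1}\co (A\ot (\mu_{H}\co c_{H,H}))\co (\rho_{A}\ot H)$, and checks the two composites directly using (a2-1) and (a2-2). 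This computation is considerably shorter than the one for $\gamma_{A\bullet B}$ in Proposition~\ref{product-Gal}; each composite collapses in four or five lines, so you are overestimating the work. Lemma~\ref{lemma-2}(ii) is not actually needed.

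Where you diverge more from the paper is in the strong part. From the explicit formula for $\gamma_{\overline{A}}^{-1}$ together with (b1) one gets immediately $f_{\overline{A}}=c_{A,A}\co f_{A}$ (precompose with $\eta_{A}\ot H$ and use $\rho_{A}\co\eta_{A}=\eta_{A}\ot\eta_{H}$). Since $c_{A,A}:A^{e}\rightarrow \overline{A}^{e}$ is a morphism of unital magmas, this settles the strong claim in one line. Your expression $f_{\overline{A}}=c_{A,A}\co \widetilde{\gamma}_{A}^{\,-1}\co (\eta_{A}\ot H)\co \lambda_{H}$ is equivalent (the extra $\lambda_{H}$ cancels against the one hidden in $\widetilde{\gamma}_{A}$), but by keeping $\lambda_{H}$ in the picture you turn a trivial step into an unnecessary diagram chase. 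No gap, but you should simplify to $f_{\overline{A}}=c_{A,A}\co f_{A}$.
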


\begin{proof} To prove that $\overline{{\Bbb A}}$ is a Galois $H$-object we only need to show that
$\gamma_{\overline{A}}$ is an isomorphism.  We begin by proving the
following identity:
\begin{equation}
\label{aux-op} (A\ot (\mu_{H}\co c_{H,H}\co (\lambda_{H}\ot H)))\co
(\rho_{A}\ot H)\co \gamma_{A}=\gamma_{\overline{A}}\co c_{A,A}.
\end{equation}
Indeed:
\begin{itemize}
\item[ ]$\hspace{0.38cm}(A\ot (\mu_{H}\co c_{H,H}\co (\lambda_{H}\ot H)))\co
(\rho_{A}\ot H)\co \gamma_{A}  $
\item[ ]$=(\mu_{A}\ot (\mu_{H}\co (H\ot \mu_{H})\co (c_{H,H}\ot H)\co (H\ot c_{H,H})\co (c_{H,H}\ot H)\co
(\lambda_{H}\ot \lambda_{H}\ot H)\co (H\ot \delta_{H})))  $
\item[ ]$\hspace{0.38cm} \co (A\ot c_{H,A}\ot H)\co (\rho_{A}\ot
\rho_{A})$
\item[ ]$= (\mu_{A}\ot (\mu_{H} \co (\lambda_{H}\ot \mu_{H})\co (\delta_{H}\ot H)\co c_{H,H}\co
(\lambda_{H}\ot H)))\co (A\ot c_{H,A}\ot H)\co (\rho_{A}\ot
\rho_{A})$
\item[ ]$= (\mu_{A}\ot H)\co (A\ot c_{H,A})\co (\rho_{\overline{A}}\ot A)  $
\item[ ]$=\gamma_{\overline{A}}\co c_{A,A}   $
\end{itemize}
where the first equality follows by (b2) of Definition
\ref{H-comodule-magma}, (\ref{anti}) and the naturality of $c$, the
second one by the cocommutativity of $H$ and the naturality of $c$,
the third one by (a2-1) of Definition \ref{Hopf quasigroup} and the last one by the
symmetry and naturality of $c$.

Define the morphism $\gamma_{\overline{A}}^{\prime}:A\ot
H\rightarrow A\ot A$ by
\begin{equation}
\label{gamma-op-inv} \gamma_{\overline{A}}^{\prime}=c_{A,A}\co
\gamma_{A}^{-1}\co (A\ot (\mu_{H}\co c_{H,H}))\co (\rho_{A}\ot H).
\end{equation}
Then, by (\ref{aux-op}), the naturality of $c$, the cocommutativity
of $H$ and (a2-2) of Definition \ref{Hopf quasigroup}, we have the
following:
\begin{itemize}
\item[ ]$\hspace{0.38cm} \gamma_{\overline{A}}\co \gamma_{\overline{A}}^{\prime}  $
\item[ ]$=\gamma_{\overline{A}}\co c_{A,A}\co \gamma_{A}^{-1}\co (A\ot
(\mu_{H}\co c_{H,H}))\co (\rho_{A}\ot H)  $
\item[ ]$=(A\ot (\mu_{H}\co c_{H,H}\co (\lambda_{H}\ot H)))\co(\rho_{A}\ot (\mu_{H}\co c_{H,H}))\co (\rho_{A}\ot H) $
\item[ ]$=(A\ot (\mu_{H}\co (\mu_{H}\ot H)\co (H\ot c_{H,H})\co (c_{H,H}\ot H)\co
(\lambda_{H}\ot c_{H,H})\co (\delta_{H}\ot H)))\co (\rho_{A}\ot H) $
\item[ ]$= (A\ot ( (\mu_H\circ (\mu_H\ot H)\circ (H\ot \lambda_H\ot H)
\circ (H\ot \delta_H))\co c_{H,H}))\co (\rho_{A}\ot H) $
\item[ ]$= id_{A\ot H}.  $
\end{itemize}
Moreover, by a similar reasoning but using (a2-1) of Definition
\ref{Hopf quasigroup} instead of (a2-2) we obtain
\begin{itemize}
\item[ ]$\hspace{0.38cm} \gamma_{\overline{A}}^{\prime}\co \gamma_{\overline{A}}  $
\item[ ]$= c_{A,A}\co \gamma_{A}^{-1}\co ((\mu_{A}\co c_{A,A})\ot (\mu_H\circ
(H\ot \mu_H)\circ (H\ot \lambda_H\ot H)\circ (\delta_H\ot
H)\co c_{H,H}))\co (A\ot c_{H,A}\ot H)\co (\rho_{A}\ot \rho_{A}) $
\item[ ]$=c_{A,A}\co \gamma_{A}^{-1}\co\gamma_{A}\co c_{A,A} $
\item[ ]$= id_{A\ot A}.  $
\end{itemize}
Therefore, $\gamma_{\overline{A}}$ is an isomorphism and
$\overline{{\Bbb A}}$ a Galois $H$-object.

Finally, it is easy to show that $f_{\overline{A}}=c_{A,A}\co
f_{A}$. Then, if $f_{A}$ is a morphism of unital magmas,
so is $f_{\overline{A}}$. Thus if ${\Bbb A}$ is strong,
$\overline{{\Bbb A}}$ is strong.

\end{proof}

\begin{prop}
\label{op-Gal-1} Let $H$ be a cocommutative flat Hopf quasigroup and
${\Bbb A}$  a Galois $H$-object. Then ${\Bbb A}\bullet
\overline{{\Bbb A}}$ is isomorphic to ${\Bbb H}$ as right
$H$-comodules. Moreover, if  ${\Bbb A}$ is strong, the previous
isomorphism is a morphism of right $H$-comodule magmas.

\end{prop}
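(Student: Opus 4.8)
The plan is to realize the required isomorphism as a single explicit morphism $\theta\colon {\Bbb H}\to {\Bbb A}\bullet\overline{{\Bbb A}}$, namely the factorization of $c_{A,A}\co f_A$ through the equalizer $i_{A\bullet\overline A}$, and then to produce an explicit inverse. The braiding $c_{A,A}$ is essential here: $f_A$ by itself intertwines the coaction of ${\Bbb A}\bullet\overline{{\Bbb A}}$ with the $\lambda_H$-twisted coaction $(H\ot\lambda_H)\co\delta_H$ on $H$, not with $\delta_H$ itself. Note that by Proposition \ref{op-Gal} the morphism $c_{A,A}\co f_A$ coincides with $f_{\overline A}=\gamma_{\overline A}^{-1}\co(\eta_A\ot H)$; this will be convenient for the inverse.

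First I would check that $c_{A,A}\co f_A\colon H\to A\ot\overline A$ equalizes $\rho^1_{A\ot\overline A}$ and $\rho^2_{A\ot\overline A}$, so that it factors uniquely as $i_{A\bullet\overline A}\co\theta$. Writing $f_A=\gamma_A^{-1}\co(\eta_A\ot H)$, evaluating $\rho^1_{A\ot\overline A}$ and $\rho^2_{A\ot\overline A}$ on $c_{A,A}\co f_A$ requires the $H$-coactions of $A$ on the two ``legs'' of $\gamma_A^{-1}$, and these are provided by parts (i) and (ii) of Lemma \ref{lemma-2}; after applying $\lambda_H\co\lambda_H=id_H$ (which holds since $H$ is cocommutative) and the cocommutativity of $\delta_H$, the two expressions become equal. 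Moreover, since $(i_{A\bullet\overline A}\ot H)\co\rho_{A\bullet\overline A}=\rho^2_{A\ot\overline A}\co i_{A\bullet\overline A}$ and $i_{A\bullet\overline A}\ot H$ is a monomorphism ($H$ is flat), the very same computation of $\rho^2_{A\ot\overline A}\co(c_{A,A}\co f_A)$ shows $\rho_{A\bullet\overline A}\co\theta=(\theta\ot H)\co\delta_H$; that is, $\theta$ is a morphism of right $H$-comodules.

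For the inverse I would consider $\overline\gamma:=\gamma_A\co c_{A,A}\co i_{A\bullet\overline A}\colon A\bullet\overline A\to A\ot H$ (on the equalizer this equals $\gamma_{\overline A}\co i_{A\bullet\overline A}$ as well). Using the equalizer relation $\rho^1_{A\ot\overline A}\co i_{A\bullet\overline A}=\rho^2_{A\ot\overline A}\co i_{A\bullet\overline A}$, axiom (b2) of Definition \ref{H-comodule-magma}, the antipode identity \eqref{antipode} and cocommutativity, I would check that $(\rho_A\ot H)\co\overline\gamma=(A\ot\eta_H\ot H)\co\overline\gamma$. By Proposition \ref{trivial-coinvaiants}, $\eta_A\colon K\to A$ is the equalizer of $\rho_A$ and $A\ot\eta_H$, and since $H$ is flat, $\eta_A\ot H\colon H\to A\ot H$ is the equalizer of $\rho_A\ot H$ and $A\ot\eta_H\ot H$; hence $\overline\gamma$ factors uniquely as $(\eta_A\ot H)\co\omega$ for some $\omega\colon A\bullet\overline A\to H$. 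Post-composing with the monomorphisms $\eta_A\ot H$ and $i_{A\bullet\overline A}$ respectively, the identities $\gamma_A\co f_A=\eta_A\ot H$ and $c_{A,A}\co c_{A,A}=id$ give $\omega\co\theta=id_H$ and $\theta\co\omega=id_{A\bullet\overline A}$, so $\theta$ is an isomorphism of right $H$-comodules.

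Finally, assuming ${\Bbb A}$ strong, I would prove $\theta$ is a morphism of unital magmas; since $i_{A\bullet\overline A}$ is a monomorphism this reduces to the identities $c_{A,A}\co f_A\co\eta_H=\eta_{A\ot\overline A}$ and $c_{A,A}\co f_A\co\mu_H=\mu_{A\ot\overline A}\co((c_{A,A}\co f_A)\ot(c_{A,A}\co f_A))$. Both follow from the hypothesis that $f_A\colon H\to A^{e}=\overline A\ot A$ is a morphism of unital magmas, once one uses $c_{A,A}\co\mu_{\overline A\ot A}=\mu_{A\ot\overline A}\co(c_{A,A}\ot c_{A,A})$ (naturality of $c$, as in the proof of Proposition \ref{tensor-product}). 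The step I expect to be the main obstacle is the verification that $\overline\gamma$ takes values in the coinvariants, i.e. $(\rho_A\ot H)\co\overline\gamma=(A\ot\eta_H\ot H)\co\overline\gamma$: this is the only genuinely non-formal point, and it is here that the antipode axiom must be combined with some care with the equalizer relation and the cocommutativity of $H$.
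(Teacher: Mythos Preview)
Your proposal is correct and follows essentially the same approach as the paper. The paper's $h_A'$ is your $\theta$ (both factor $f_{\overline A}=c_{A,A}\co f_A$ through $i_{A\bullet\overline A}$), the paper's $h_A$ is your $\omega$ (both factor $\gamma_{\overline A}\co i_{A\bullet\overline A}$ through $\eta_A\ot H$, and your identification $\gamma_A\co c_{A,A}\co i_{A\bullet\overline A}=\gamma_{\overline A}\co i_{A\bullet\overline A}$ follows by applying $(\mu_A\co c_{A,A})\ot H$ to the equalizer relation), and the treatment of the strong case is identical. The only cosmetic differences are that the paper constructs $h_A$ first and verifies the comodule condition on $h_A$ rather than on $h_A'$, and that the paper checks the equalizer condition for $f_{\overline A}$ by a direct computation using (b2) and (a2-2) rather than by invoking Lemma~\ref{lemma-2}; your route via Lemma~\ref{lemma-2} together with $\lambda_H\co\lambda_H=id_H$ and cocommutativity is equally valid.
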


\begin{proof} First note that, by Proposition
\ref{trivial-coinvaiants}, we know that (\ref{coinvariants=K}) is an
equalizer diagram and then so is
$$
\setlength{\unitlength}{3mm}
\begin{picture}(30,4)
\put(1,2){\vector(1,0){5}} \put(11,2.5){\vector(1,0){10}}
\put(11,1.5){\vector(1,0){10}} \put(0,2){\makebox(0,0){$H$}}
\put(25,2){\makebox(0,0){$A\ot H\ot H$}}
\put(4,3){\makebox(0,0){$\eta_{A}\ot H$}}
\put(16,3.5){\makebox(0,0){$\rho_{A}\ot H$}}
\put(16,0.5){\makebox(0,0){$A\ot \eta_{H}\ot H$}}
\put(8.5,2){\makebox(0,0){$A\ot H$}}
\end{picture}
$$
because $H$ is flat. For the morphism $
\gamma_{\overline{A}}\co i_{A\bullet \overline{A}}:A\bullet
\overline{A}:\rightarrow A\ot H$ we have the following:

\begin{itemize}
\item[ ]$\hspace{0.38cm} (\rho_{A}\ot H)\co \gamma_{\overline{A}}\co i_{A\bullet \overline{A}}  $
\item[ ]$= (\mu_{\overline{A}\ot \overline{H}}\ot \lambda_{H})\co  (\rho_{A}\ot
((A\ot \delta_{H})\co \rho_{A}))\co i_{A\bullet \overline{A}} $
\item[ ]$=(\mu_{\overline{A}\ot \overline{H}}\ot \lambda_{H})\co
(A\ot H\ot ((A\ot \delta_{H})\co \rho_{A}))\co (A\ot (c_{A,H}\co
(A\ot \lambda_{H})\co \rho_{A}))\co i_{A\bullet \overline{A}}  $
\item[ ]$= (\mu_{\overline{A}}\ot \mu_{\overline{H}}\ot \lambda_{H})\co (A\ot
((A\ot H\ot  \delta_{H})\co (A\ot c_{H,H})\co
(A\ot ((H\ot \lambda_{H})\co \delta_{H}))))\co (A\ot \rho_{A}) \co
i_{A\bullet \overline{A}}$
\item[ ]$=( \mu_{\overline{A}}\ot ((\mu_{H}\co ((\lambda_{H}\ot H)\co \delta_{H})\ot
\lambda_{H})\co \delta_{H}))\co (A\ot \rho_{A}) \co i_{A\bullet
\overline{A}} $
\item[ ]$= (A\ot \eta_{H}\ot H)\co \gamma_{\overline{A}}\co i_{A\bullet \overline{A}}$
\end{itemize}
where the first equality follows by the naturality of $c$ and (b2)
of Definition \ref{H-comodule-magma}, the second one because $\rho_{A\ot
\overline{A}}^{1}\co i_{A\bullet \overline{A}}=\rho_{A\ot
\overline{A}}^{2}\co i_{A\bullet \overline{A}}$, the third one relies on 
the symmetry and the naturality of $c$, the fourth one follows by (\ref{anti})
and the last one by (\ref{antipode}).

Therefore, there exists an unique morphism $h_{A}:A\bullet
\overline{A}\rightarrow H$ such that
\begin{equation}
\label{h-iso} (\eta_{A}\ot H)\co h_{A}=\gamma_{\overline{A}}\co
i_{A\bullet \overline{A}}.
\end{equation}
The morphism $h_{A}$ is a right comodule morphism because by the
cocommutativity of $H$, (\ref{anti}) and the comodule properties of
$A$, we have
\begin{itemize}
\item[ ]$\hspace{0.38cm} \eta_{A}\ot(( h\ot H)\co \rho_{A\bullet \overline{A}})  $
\item[ ]$= ((\gamma_{\overline{A}}\co
i_{A\bullet \overline{A}})\ot H)\co \rho_{A\bullet \overline{A}} $
\item[ ]$= ((\mu_{A}\co c_{A,A})\ot ((\lambda_{H}\ot \lambda_{H})\co \delta_{H}))\co
(A\ot \rho_{A})\co i_{A\bullet \overline{A}}  $
\item[ ]$= ((\mu_{A}\co c_{A,A})\ot ((\lambda_{H}\ot \lambda_{H})\co c_{H,H}\co \delta_{H}))\co
(A\ot \rho_{A})\co i_{A\bullet \overline{A}}$
\item[ ]$=((\mu_{A}\co c_{A,A})\ot ( \delta_{H}\co \lambda_{H}))\co
(A\ot \rho_{A})\co i_{A\bullet \overline{A}} $
\item[ ]$= (A\ot \delta_{H})\co \gamma_{\overline{A}}\co
i_{A\bullet \overline{A}}$
\item[ ]$=\eta_{A}\ot (\delta_{H}\co h)$.
\end{itemize}
and using that $\eta_{A}\ot H\ot H$ is an equalizer morphism we obtain
$(h\ot H)\co \rho_{A\bullet \overline{A}}=\delta_{H}\co h.$

On the other hand, for $f_{\overline{A}}: H\rightarrow A\ot A$ we
have the following
\begin{itemize}
\item[ ]$\hspace{0.38cm}  (\gamma_{\overline{A}}\ot H)\co \rho_{A\ot \overline{A}}^{1}\co
f_{\overline{A}}$
\item[ ]$=(\mu_{A}\ot \lambda_{H}\ot H)\co (c_{A,A}\ot H\ot H)\co (A\ot \rho_{A}\ot H)\co (A\ot c_{H,A})\co
(\rho_{A}\ot A)\co c_{A,A}\co f_{A}   $
\item[ ]$=  (\mu_{A}\ot \lambda_{H}\ot H)\co  (A\ot c_{H,A}\ot H)\co (\rho_{A}\ot\rho_{A})\co
f_{A} $
\item[ ]$=  (\mu_{A}\ot ((\lambda_{H}\ot H)\co \gamma_{H}^{-1}\co \gamma_{H}))\co  (A\ot c_{H,A}\ot H)\co (\rho_{A}\ot\rho_{A})\co
f_{A} $
\item[ ]$=(A\ot (\lambda_{H}\co \mu_{H})\ot H)\co (\rho_{A}\ot ((\lambda_{H}\ot H)\co \delta_{H}))\co \gamma_{A}
\co f_{A}   $
\item[ ]$=\eta_{A}\ot(((\lambda_{H}\co \lambda_{H})\ot H)\co \delta_{H})$
\item[ ]$=\eta_{A}\ot \delta_{H}$
\end{itemize}
where the first equality follows because $f_{\overline{A}}=c_{A,A}\co f_{A}$, the second one
 by the symmetry and the naturality of $c$. In  the third one
we used that ${\Bbb H}$ is a Galois $H$-object and the fourth and the sixth ones are 
a consequence of (b1) of Definition \ref{H-comodule-magma}. Finally,
in the fifth one we applied that ${\Bbb A}$ is a Galois $H$-object,
and the last one relies on the cocommutativity of $H$. Also
\begin{itemize}
\item[ ]$\hspace{0.38cm}  (\gamma_{\overline{A}}\ot H)\co \rho_{A\ot \overline{A}}^{2}\co
f_{\overline{A}} $
\item[ ]$= ((\mu_{A}\co c_{A,A})\ot ((\lambda_{H}\ot \lambda_{H})\co \delta_{H}))\co
(A\ot \rho_{A})\co c_{A,A}\co f_{A} $
\item[ ]$= (\mu_{A} \ot ((\lambda_{H}\ot \lambda_{H})\co \delta_{H}))\co (A\ot c_{H,A})
\co (\rho_{A}\ot A)\co f_{A} $
\item[ ]$= (\mu_{A} \ot ((\varepsilon_{H}\ot ((\lambda_{H}\ot \lambda_{H})\co \delta_{H}))\co
c_{H,H}\co ((\mu_H\circ(\mu_H\ot \lambda_H)\circ (H\ot \delta_H))\ot
H))) $
\item[ ]$\hspace{0.38cm}\co (A\ot c_{H,A}\ot \delta_{H})\co
(\rho_{A}\ot \rho_{A})\co f_{A}$
\item[ ]$=(A \ot ((\lambda_{H}\ot \lambda_{H})\co \delta_{H}))\co (A\ot \mu_{H})\co  ((\rho_{A}\co \eta_{A})\ot \lambda_{H})   $
\item[ ]$=\eta_{A}\ot \delta_{H},$
\end{itemize}
where the first equality follows by (b1) of Definition
\ref{H-comodule-magma} and the comodule properties of $A$, the
second one by the naturality of $c$, the third one by (a2-2) of
Definition \ref{Hopf quasigroup} and the counit properties, the
fourth one by (b2) of Definition \ref{H-comodule-magma} and in the
last one we used that ${\Bbb A}$ is a Galois $H$-object, (b1) of
Definition \ref{H-comodule-magma} and the cocommutativity of $H$.

Then, $\rho_{A\ot \overline{A}}^{1}\co f_{\overline{A}}=\rho_{A\ot
\overline{A}}^{2}\co f_{\overline{A}}$ and, as a consequence, there
exists a unique morphism $h_{A}^{\prime}:H\rightarrow A\bullet
\overline{A}$ such that
\begin{equation}
\label{h-iso-1} i_{A\bullet \overline{A}}\co
h_{A}^{\prime}=f_{\overline{A}}.
\end{equation}

Therefore, by (\ref{h-iso}) and (\ref{h-iso-1}) we have
$$i_{A\bullet \overline{A}}\co
h_{A}^{\prime}\co h_{A}=f_{\overline{A}}\co
h_{A}=\gamma_{\overline{A}}^{-1}\co(\eta_{A}\ot H)\co h_{A}=
\gamma_{\overline{A}}^{-1}\co\gamma_{\overline{A}}\co i_{A\bullet
\overline{A}}=i_{A\bullet \overline{A}}$$ and
$$(\eta_{A}\ot H)\co
h_{A}\co h_{A}^{\prime}=\gamma_{\overline{A}}\co i_{A\bullet
\overline{A}}\co h_{A}^{\prime}=\gamma_{\overline{A}}\co
f_{\overline{A}}=
\gamma_{\overline{A}}\co\gamma_{\overline{A}}^{-1}\co (\eta_{A}\ot
H)=(\eta_{A}\ot H).$$ Then, $h_{A}^{\prime}\co h_{A}=id_{A\bullet
\overline{A}}$ and $h_{A}\co h_{A}^{\prime}=id_{H}$ and $h$ is an
isomorphism.

Finally, assume that ${\Bbb A}$ is strong. By (\ref{h-iso-1}) and
the equality $f_{\overline{A}}= c_{A,A}\co f_{A}$ we obtain that
$h_{A}^{\prime}$ is a morphism of unital magmas. Then $h_{A}$ is a
morphism of unital magmas and  the proof is finished.
\end{proof}

\begin{rem}
\label{Gal-acla} {\rm Note that, in the Hopf algebra setting, for
any Galois $H$-object ${\Bbb A}$, the morphism $h_{A}$ obtained in
the previous proposition is a morphism of monoids because this
property can be deduced from the associativity of the product
defined in $A$. In the Hopf quasigroup world this proof does not
work because $A$  is  a magma. }
\end{rem}

\begin{teo}
\label{main-result}  Let $H$ be a cocommutative faithfully flat Hopf
quasigroup. The set of isomorphism classes of Galois $H$-objects is
a commutative monoid. Moreover, the set of isomorphism classes of
strong Galois $H$-objects is a commutative group.
\end{teo}

\begin{proof} Let $Gal_{{\mathcal C}}(H)$ be the set of isomorphism classes of Galois $H$-objects.
For a Galois $H$-object ${\Bbb A}$ we denote its class in
$Gal_{{\mathcal C}}(H)$ by $[{\Bbb A}]$. By  by Propositions
\ref{product-Gal} and \ref{morphisms}, the product
\begin{equation}
\label{gal-group} [{\Bbb A}].[{\Bbb B}]=[{\Bbb A}\bullet {\Bbb B}]
\end{equation}
is well-defined. By Propositions \ref{product-associative},
\ref{product-commutative} and \ref{product-unit} we obtain that
$Gal_{{\mathcal C}}(H)$ is a commutative monoid with unit $[{\Bbb
H}]$.

 If we denote by $Gal_{{\mathcal C}}^s(H)$ the set of isomorphism classes of
strong Galois $H$-objects, with the product defined in
(\ref{gal-group}) for Galois $H$-objects,  $Gal_{{\mathcal
C}}^s(H)$ is a commutative group because by (ii) of Proposition \ref{product-Gal} the product of strong Galois $H$-objects is a strong Galois $H$-object,  by Example \ref{H-strong}
we know that ${\Bbb H}$ is a strong Galois $H$-object and by
Propositions \ref{op-Gal} and \ref{op-Gal-1}, the inverse of $[{\Bbb
A}]$ in $Gal_{{\mathcal
C}}^s(H)$ is $[\overline{{\Bbb A}}]$.
\end{proof}

\begin{defin}
\label{normal basis}
{\rm  Let $H$ be a cocommutative faithfully flat Hopf quasigroup. If ${\Bbb A}$ is a (strong) Galois $H$-object, we will say that ${\Bbb A}$ has a normal basis if $(A,\rho_{A})$ is isomorphic to $(H,\delta_{H})$ as right $H$-comodules. We denote by $n_{A}$ the $H$-comodule isomorphism between $A$ and $H$.

Obviously, $N_{{\mathcal C}}(H)$, the set of isomorphism classes of Galois $H$-objects with normal basis,  is a submonoid of $Gal_{{\mathcal C}}(H)$ because ${\Bbb H}=(H,\delta_{H})$ is a Galois $H$-object with normal basis and if ${\Bbb A}$, ${\Bbb B}$  are Galois $H$-objects with normal basis and associated isomorphisms $n_{A}$, $n_{B}$ respectively, then ${\Bbb A}\bullet {\Bbb B}$ is a Galois $H$-object with normal basis and associated $H$-comodule isomorphism
$n_{A\bullet B}=r_{H}\co n_{A}\bullet n_{B}$  where $n_{A}\bullet n_{B}$ is defined as in Proposition \ref{morphisms} and $r_{H}$ is the isomorphism defined in Proposition \ref{product-unit}.  Moreover, for a strong Galois $H$-object with normal basis ${\Bbb A}$, with associated isomorphism $n_{A}$, we have that $\overline{{\Bbb A}}=(\overline{A},\rho_{\overline{A}})$ is also  a strong Galois $H$-object with normal basis, where $n_{\overline{A}}=\lambda_{H}\co n_{A}$, and then, if  we denote by $N_{{\mathcal C}}^s(H)$ the set of isomorphism classes of strong Galois $H$-objects with normal basis,  $N_{{\mathcal C}}^s(H)$ is a subgroup of  $Gal_{{\mathcal C}}^s(H)$.

 Note that, if $H$ is a Hopf algebra we have that $Gal_{{\mathcal C}}^s(H)=Gal_{{\mathcal C}}(H)$ and $N_{{\mathcal C}}^s(H)=N_{{\mathcal C}}(H)$. Therefore, in the associative setting we recover the classical  group of Galois $H$-objects.

}
\end{defin}

\begin{rem}
\label{k-theory}
{\rm
 In this remark we use some classical results of algebraic $K$-theory (see \cite{Bass} for the details). Let $G({\mathcal C},H)$ and $G^s({\mathcal C},H)$ be the  categories of Galois $H$-objects  and strong Galois $H$-objects, respectively. Then, by Proposition \ref{monoidal-magmas} these categories are symmetric monoidal  and then they are categories with product.  The Grothendieck group of  $G({\mathcal C},H)$ is the abelian group generated by the isomorphisms classes of objects ${\Bbb A}$ of $G({\mathcal C},H)$ module the relations $[{\Bbb A}\bullet {\Bbb B}]=[{\Bbb A}].[{\Bbb B}]$. This group will be denoted by $K_{0}G({\mathcal C},H)$ and, by the general theory of Grothendieck groups, we know that for ${\Bbb A}$,  ${\Bbb B}$ in $G({\mathcal C},H)$,  $[{\Bbb A}]=[{\Bbb B}]$ in $K_{0}G({\mathcal C},H)$ if and only if there exists a ${\Bbb D}$ in $G({\mathcal C},H)$ such that ${\Bbb A}\bullet {\Bbb D}$ is isomorphic in $G({\mathcal C},H)$ to ${\Bbb D}\bullet {\Bbb D}$. The unit of  $K_{0}G({\mathcal C},H)$ is $[{\Bbb H}]$. In a similar way we can define $K_{0}G^s({\mathcal C},H)$, but in this case $K_{0}G^s({\mathcal C},H)=Gal_{{\mathcal C}}^s(H)$ because the set of isomorphism classes of objects of $G^s({\mathcal C},H)$ is a group.

The inclusion functor $i:G^s({\mathcal C},H)\rightarrow G({\mathcal C},H)$ is a product preserving functor  and then we have a group morphism $K_{0}i: Gal_{{\mathcal C}}^s(H)\rightarrow K_{0}G({\mathcal C},H)$. If $[{\Bbb A}]\in Ker(K_{0}i)$ we have that $[{\Bbb A}]=[{\Bbb H}]$ in $K_{0}G({\mathcal C},H)$. Then there exists ${\Bbb D}$ in $G({\mathcal C},H)$ such that ${\Bbb A}\bullet {\Bbb D}\cong {\Bbb H}\bullet {\Bbb D} \cong  {\Bbb D} $ in $G({\mathcal C},H)$. As a consequence
${\Bbb A}\bullet {\Bbb D}\bullet \overline{{\Bbb D}}\cong {\Bbb D}\bullet \overline{{\Bbb D}}$ in $G({\mathcal C},H)$. Then, By Proposition \ref{op-Gal-1},  ${\Bbb A}\cong {\Bbb H}$ as right $H$-comodules. Therefore ${\Bbb A}$ is a strong Galois $H$-object with normal basis and  $Ker(K_{0}i)$ is a subgroup of $N_{{\mathcal C}}^s(H)$.

 The full subcategory  ${\mathcal H}=\{{\Bbb H}\}$ of $G^s({\mathcal C}, H)$ is cofinal because, for all 
${\Bbb A}$ in $G^s({\mathcal C}, H)$, ${\Bbb A}\bullet \overline{{\Bbb A}}\cong {\Bbb H}$ as right $H$-comodule magmas. Therefore, the Whitehead group of $G^s({\mathcal C}, H)$ is isomorphic to the 
Whitehead group of ${\mathcal H}$. Therefore,
$$K_{1}G^s({\mathcal C}, H)\cong Aut_{G^s({\mathcal C}, H)}( {\Bbb H}).$$

The group $Aut_{G^s({\mathcal C}, H)}( {\Bbb H})$ admits a good explanation in terms of grouplike elements of a suitable Hopf quasigroup if $H$ is finite, that is, if there exists an object $H^{\ast}$ in ${\mathcal C}$ and an adjunction $H\ot -\dashv H^{\ast}\ot -$. For this adjunction we will denote with  $a_{H}:id_{{\mathcal C}}\rightarrow H^{\ast}\ot H\ot -$ and $b_{H}:H\ot H^{\ast}\ot -\rightarrow id_{{\mathcal C}}$ the unit and the counit respectively. The object $H^{\ast}$ will be called the dual of $H$.

A Hopf coquasigroup $D$   in ${\mathcal
C}$ is a  monoid $(D, \eta_D, \mu_D)$ and a counital comagma $(D,
\varepsilon_D, \delta_D)$ such that the following axioms hold:
\begin{itemize}
\item[(d1)] $\varepsilon_D$ and $\delta_D$ are  morphisms of monoids.

\item[(d2)] There exists  $\lambda_{D}:D\rightarrow D$
in ${\mathcal C}$ (called the antipode of $D$) such that:

\begin{itemize}
\item[(d2-1)] $(\mu_D\ot D)\circ (\lambda_D\ot \delta_D)\circ \delta_D=
\eta_D\ot D= (\mu_D\ot D)\circ (D\ot ((\lambda_{D}\ot D)\co\delta_D))\circ \delta_D.$

\item[(d2-2)] $(D\ot \mu_D)\circ (\delta_{D}\ot \lambda_D)\circ \delta_D=
D\ot  \eta_D=(D\ot \mu_D)\circ (((D\ot \lambda_D)\co \delta_{D})\ot D)\circ 
\delta_D.$
\end{itemize}
\end{itemize}

As in the case of quasigroups, the antipode  is unique,
antimultiplicative, anticomultiplicative, leaves the unit and the
counit invariable and satisfies (\ref{antipode}).

If $D$ is a Hopf coquasigroup we define $G(D)$ as the set of morphisms  $h:K\rightarrow D$ such that 
$\delta_{D}\co h=h\ot h$ and $\varepsilon_{D}\co h=id_{K}$. If $D$ is commutative, $G(D)$  with the  convolution $h\ast g=\mu_{D}\co (h\ot g)$ is a  commutative group, called the group of grouplike morphisms of $D$. Note that the unit element of $G(D)$ is $\eta_{D}$ and the inverse of 
$h\in G(D)$  is $h^{-1}=\lambda_{D}\co h$. 

It is  easy to show that, if $H$ is a finite cocommutative Hopf quasigroup,  its dual $H^{\ast}$ is a commutative finite Hopf coquasigroup where:
$$\eta_{H^{\ast}}= 
(H^{\ast}\otimes \varepsilon_{H})\circ a_{H}, \;\; \mu_{H^{\ast}}= 
(H^{\ast}\otimes b_{H})\circ (H^{\ast}\otimes H\otimes b_{H}\otimes H^{\ast}) 
\circ (H^{\ast}\otimes \delta_{H}\otimes H^{\ast} 
\otimes H^{\ast})\circ (a_{H}\otimes H^{\ast}\otimes H^{\ast})), $$
$$\varepsilon_{H^{\ast}}=b_{H}\circ(\eta_{H}\otimes 
H^{\ast}),\;\; \delta_{H^{\ast}}= (H^{\ast}\otimes H^{\ast}\otimes (b_{H}\circ 
(\mu_{H}\otimes H^{\ast})))\circ (H^{\ast}\otimes 
a_{H}\otimes H\otimes H^{\ast})\circ (a_{H}\otimes H^{\ast}))$$ 
and the antipode is $(H^{\ast}\otimes b_{H})\circ 
(H^{\ast}\otimes \lambda_{H} \otimes H^{\ast})\circ (a_{H}\otimes H^{\ast})$. 

The groups $G(H^{\ast})$ and $Aut_{G^s({\mathcal C}, H)}( {\Bbb H})$ are isomorphic. The proof is equal to the  one given in Proposition 3.7 of \cite{Galois}. If $\alpha \in Aut_{G^s({\mathcal C}, H)}( {\Bbb H})$, the morphism $z_{\alpha}=(H^{\ast}\otimes (\varepsilon_{H}\circ \alpha))\circ a_{H}$ is in $G({\bf H}^{\ast})$. Then, we define the map $Aut_{G^s({\mathcal C}, H)}( {\Bbb H})\rightarrow G({\bf H}^{\ast})$ by $z(\alpha)=z_{\alpha}$. On the other hand, if $h\in G({\bf H}^{\ast})$, then $x_{h}=(H\otimes b_{H})\circ (\delta_{H}\otimes h):H\rightarrow H$  is a morphism of Galois 
${\bf H}$-objects and then, by Remark \ref{isomorphism}, it is an isomorphism, that is $x_{h}\in 
Aut_{G^s({\mathcal C}, H)}( {\Bbb H})$. The map  $x:G({\bf H}^{\ast})\rightarrow 
Aut_{G^s({\mathcal C}, H)}( {\Bbb H})$ defined by $x(h)=x_{h}$ is the inverse of $z$.
Therefore, 
$$K_{1}G^s({\mathcal C}, H)\cong G({\bf H}^{\ast}).$$
Finally, $N^s({\mathcal C}, H)$  is the subcategory of $G^s({\mathcal C}, H)$ whose objects are the strong Galois $H$-objects with normal basis, note that ${\mathcal H}=\{{\Bbb H}\}$ it is also cofinal in  $N^s({\mathcal C}, H)$ and then 
$$K_{1}N^s({\mathcal C}, H)\cong G({\bf H}^{\ast}).$$

}
\end{rem}

\section{Invertible comodules with geometric normal basis}

This section is devoted to  study the connections between Galois $H$-objects and invertible comodules with geometric normal basis. First of all, we introduce the notion of invertible comodule with geometric normal basis which is a generalization to the non associative setting of the one defined by Caenepeel in \cite{CAE}.

\begin{defin}
\label{geo-nor-bas}
{\rm  Let $H$ be a cocommutative faithfully flat Hopf quasigroup. A right $H$-comodule $\mathbf{M}=(M,\rho_{M})$ is called invertible with geometric normal basis if there exists a faithfully flat unital magma $S$ and an isomorphism $h_{M}:S\ot M\rightarrow S\ot H$ of right $H$-comodules such that $h_{M}$ is almost lineal, that is
\begin{equation}
\label{almost}
h_{M}=(\mu_{S}\ot H)\co (S\ot (h_{M}\co (\eta_{S}\ot M)))
\end{equation}
A morphism between two invertible right $H$-comodules with normal basis is a morphism of right $H$-comodules.

Note that, if $S$ is a monoid, $h_{M}$ is a morphism of left
$S$-modules, for $\varphi_{S\ot M}=\mu_{S}\ot M$ and $\varphi_{S\ot
H}=\mu_{S}\ot H$, if and only if (\ref{almost}) holds. Then in the
Hopf algebra setting this definition is the one introduced by
Caenepeel in \cite{CAE}. }
\end{defin}

\begin{ej}
{\rm Let $H$ be a cocommutative faithfully flat Hopf quasigroup and let ${\Bbb A}=(A,\rho_{A})$ be a Galois $H$-object. Then
$\mathbf{A}=(A,\rho_{A})$ is  an invertible right $H$-comodule with geometric normal basis because $h_{A}=\gamma_{A}$ is an isomorphism of right $H$-comodules and trivially  $\gamma_{A}$ is almost lineal. In particular, $\mathbf{H}=(H,\delta_{H})$ is an example of invertible right $H$-comodule with geometric normal basis.
}
\end{ej}

\begin{prop}
\label{product-geo-nor-bas}
Let $H$ be a cocommutative faithfully flat Hopf quasigroup  and $\mathbf{M}$, $\mathbf{N}$ be invertible right $H$-comodules with geometric normal basis. Then the right  $H$-comodule $\mathbf{M}\bullet \mathbf{ N}=(M\bullet N, \rho_{M\bullet N})$,  where  $M\bullet N$ and $\rho_{M\bullet N}$ are defined as in Proposition \ref{product}, is a right $H$-comodule with geometric normal basis.
\end{prop}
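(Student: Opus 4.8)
The plan is to trivialise $\mathbf{M}\bullet\mathbf{N}$ over the unital magma $T=S\ot S'$, where $(S,h_{M})$ and $(S',h_{N})$ are the data furnished by the geometric normal bases of $\mathbf{M}$ and $\mathbf{N}$; that is, $S$, $S'$ are faithfully flat unital magmas and $h_{M}:S\ot M\rightarrow S\ot H$, $h_{N}:S'\ot N\rightarrow S'\ot H$ are isomorphisms of right $H$-comodules satisfying the almost linearity condition (\ref{almost}). First I would note that $T=S\ot S'$, with $\eta_{T}=\eta_{S}\ot\eta_{S'}$ and $\mu_{T}=\mu_{S\ot S'}$, is a faithfully flat unital magma, since $T\ot-=S\ot(S'\ot-)$ is a composite of faithfully flat functors.

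Next I would assemble an isomorphism $\Phi:T\ot M\ot N\rightarrow T\ot H\ot H$ by reshuffling tensor factors with the symmetry $c$ so that $h_{M}$ and $h_{N}$ act in parallel, namely
\[
\Phi=(S\ot c_{H,S'}\ot H)\co(h_{M}\ot h_{N})\co(S\ot c_{S',M}\ot N).
\]
Using that $h_{M}$ and $h_{N}$ are morphisms of right $H$-comodules together with the naturality and symmetry of $c$, one checks that $\Phi$ conjugates the parallel pair $(T\ot\rho_{M\ot N}^{1},\,T\ot\rho_{M\ot N}^{2})$ into the parallel pair $(T\ot\rho_{H\ot H}^{1},\,T\ot\rho_{H\ot H}^{2})$, i.e. $(\Phi\ot H)\co(T\ot\rho_{M\ot N}^{i})=(T\ot\rho_{H\ot H}^{i})\co\Phi$ for $i=1,2$ (the index $i=2$ uses the comodule property of $h_{N}$, the index $i=1$ that of $h_{M}$). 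Since $T$ is flat, $T\ot i_{M\bullet N}$ is the equalizer of $T\ot\rho_{M\ot N}^{1}$ and $T\ot\rho_{M\ot N}^{2}$, and, by Remark \ref{rem-product-unit}, $T\ot\delta_{H}$ is the equalizer of $T\ot\rho_{H\ot H}^{1}$ and $T\ot\rho_{H\ot H}^{2}$. Hence $\Phi\co(T\ot i_{M\bullet N})$ equalizes the latter pair and factors uniquely as a morphism $h_{M\bullet N}:T\ot(M\bullet N)\rightarrow T\ot H$ with $(T\ot\delta_{H})\co h_{M\bullet N}=\Phi\co(T\ot i_{M\bullet N})$.

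Then I would verify that $h_{M\bullet N}$ is the required geometric normal basis. It is an isomorphism because $\Phi$ is, so it restricts to an isomorphism between the two equalizers, its inverse being produced in the same way from $\Phi^{-1}$. It is a morphism of right $H$-comodules: the coaction on $\mathbf{M}\bullet\mathbf{N}$ is the factorization of $\rho_{M\ot N}^{2}\co i_{M\bullet N}$ through $i_{M\bullet N}\ot H$, that on $\mathbf{H}\bullet\mathbf{H}\cong\mathbf{H}$ (Proposition \ref{product-unit}) is $\delta_{H}$, so the case $i=2$ of the conjugation identity for $\Phi$, the defining equality of $h_{M\bullet N}$, and the fact that $i_{M\bullet N}\ot H$ and $\delta_{H}\ot H$ are monomorphisms yield that $h_{M\bullet N}$ is compatible with the coactions. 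Finally, for almost linearity I would substitute $h_{M}=(\mu_{S}\ot H)\co(S\ot(h_{M}\co(\eta_{S}\ot M)))$ and the analogous identity for $h_{N}$ into the formula for $\Phi$, reorganise the symmetry isomorphisms so as to collect $\mu_{S}$ and $\mu_{S'}$ into $\mu_{T}$ and to recognise the resulting restriction as $h_{M\bullet N}\co(\eta_{T}\ot(M\bullet N))$, and then cancel the monomorphism $\delta_{H}\ot H$ through the defining equality of $h_{M\bullet N}$.

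The hard part will be precisely this last verification of almost linearity, since it forces one to track how the unit $\eta_{T}=\eta_{S}\ot\eta_{S'}$ and the product $\mu_{T}=\mu_{S\ot S'}$ interact with the equalizer factorization defining $h_{M\bullet N}$; by contrast, the construction of the trivialising isomorphism $\Phi$ and its descent to $M\bullet N$ are straightforward consequences of the flatness of $T$, the comodule property of $h_{M}$ and $h_{N}$, and the identification $\mathbf{H}\bullet\mathbf{H}\cong\mathbf{H}$.
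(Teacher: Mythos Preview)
Your proposal is correct and follows essentially the same approach as the paper: the paper also takes $T=S\ot R$, defines the same isomorphism $g_{M\ot N}=\Phi$, factors it through the two equalizers $T\ot i_{M\bullet N}$ and $T\ot\delta_{H}$ to obtain $h_{M\bullet N}$, and then checks invertibility, the comodule morphism condition, and almost linearity in that order. The only cosmetic difference is that you phrase the key computation as $\Phi$ conjugating the parallel pairs $(T\ot\rho_{M\ot N}^{i})$ into $(T\ot\rho_{H\ot H}^{i})$, whereas the paper verifies directly that $g_{M\ot N}\co(T\ot i_{M\bullet N})$ equalizes $T\ot\rho_{H\ot H}^{1}$ and $T\ot\rho_{H\ot H}^{2}$ (and dually for $g_{M\ot N}^{-1}$); your formulation is slightly stronger but the content is the same.
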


\begin{proof} Let $S$, $R$ and $h_{M}$, $h_{N}$ be the faithfully flat unital magmas and the isomorphisms of right $H$-comodules associated to $\mathbf{M}$ and $\mathbf{N}$ respectively.  Then $T=S\ot R$ is faithfully flat.  On the other hand,
$$
 \setlength{\unitlength}{3mm}
\begin{picture}(45,4)
\put(11,2){\vector(1,0){6}} \put(27,2.5){\vector(1,0){7}}
\put(27,1.5){\vector(1,0){7}} \put(6,2){\makebox(0,0){$T\ot
M\bullet N$}} \put(22,2){\makebox(0,0){$T\ot M\ot N$}}
\put(40,2){\makebox(0,0){$T\ot M\ot N \otimes H$}}
\put(14,3){\makebox(0,0){$\scriptstyle T\ot i_{M \bullet N}$}}
\put(30,3.5){\makebox(0,0){$\scriptstyle T\ot  \rho_{M\ot
N}^1$}} \put(30,0.5){\makebox(0,0){$\scriptstyle T\ot
\rho_{M\ot N}^2$}}
\end{picture}
$$
and
$$
 \setlength{\unitlength}{3mm}
\begin{picture}(45,4)
\put(11,2){\vector(1,0){6}} \put(27,2.5){\vector(1,0){7}}
\put(27,1.5){\vector(1,0){7}} \put(6,2){\makebox(0,0){$T\ot
H$}} \put(22,2){\makebox(0,0){$T\ot H\ot H$}}
\put(40,2){\makebox(0,0){$T\ot H\ot H \otimes H$}}
\put(14,3){\makebox(0,0){$\scriptstyle T\ot \delta_H$}}
\put(30,3.5){\makebox(0,0){$\scriptstyle T\ot  \rho_{H\ot
H}^1$}} \put(30,0.5){\makebox(0,0){$\scriptstyle T\ot
\rho_{H\ot H}^2$}}
\end{picture}
$$
are equalizer diagrams and for the morphism
$$g_{M\ot N}=(S\ot c_{R,H}\ot H)\co (h_{M}\ot  h_{N})\co (S\ot c_{R,M}\ot N):S\ot R\ot M\ot N\rightarrow S\ot R\ot H\ot H$$
we have that 
\begin{itemize}
\item[ ]$\hspace{0.38cm} (S\ot R\ot \rho_{H\ot H}^{1})\co g_{M\ot N}\co (S\ot R\ot i_{M\bullet N}) $
\item[ ]$= (S\ot c_{H,R}\ot c_{H,H})\co (S\ot H\ot c_{H,R}\ot H)\co
(((S\ot \delta_{H})\co h_{M})\ot h_{N})\co
(S\ot c_{R,M}\ot N)\co (S\ot R\ot i_{M\bullet N}) $
\item[ ]$=(S\ot c_{H,R}\ot c_{H,H})\co (S\ot H\ot c_{H,R}\ot H)\co
(((h_{M}\ot H)\co (S\ot \rho_{M}))\ot h_{N})\co (S\ot c_{R,M}\ot
N)\co (S\ot R\ot i_{M\bullet N}) $
\item[ ]$= (((S\ot c_{H,R}\ot H)\co (h_{M}\ot h_{N}))\ot H)\co (S\ot c_{R,M}\ot N\ot H)\co (S\ot R\ot 
((M\ot c_{H,N})\co (\rho_{M}\ot N)\co i_{M\bullet N}))  $
\item[ ]$= (((S\ot c_{H,R}\ot H)\co (h_{M}\ot h_{N}))\ot H)\co (S\ot c_{M,R}\ot
((M\ot \rho_{N})\co i_{M\bullet N}))  $
\item[ ]$=(S\ot R\ot \rho_{H\ot H}^{2})\co g_{M\ot N}\co (S\ot R\ot i_{M\bullet N}),  $
\end{itemize}
where the first and the third equalities follow by the naturality of
$c$, the second and the fifth ones by the comodule morphism
condition for $h_{M}$ and $h_{N}$ respectively and finally the
fourth one by the properties of $i_{M\bullet N}$.

Therefore, there exists a unique morphism $h_{M\bullet N}: T\ot
M\bullet N\rightarrow T\ot H$ such that
\begin{equation}
\label{product-geo-nor-bas-1} (T\ot \delta_{H})\co h_{M\bullet
N}=g_{M\ot N}\co (T\ot i_{M\bullet N}).
\end{equation}
Moreover, if we define the morphism
$$g_{M\ot N}^{\prime}=(S\ot c_{M,R}\ot N)\co (h_{M}^{-1} \ot h_{N}^{-1})\co
(S\ot c_{R,H}\ot H):S\ot R\ot H\ot H\rightarrow S\ot R\ot M\ot N$$
by the naturality of $c$, the comodule morphism condition for
$h_{M}^{-1}$and  $h_{N}^{-1}$ and the cocommutativity of $H$, the
following equalities hold
\begin{itemize}
\item[ ]$\hspace{0.38cm} (S\ot R\ot \rho_{M\ot N}^{1})\co g_{M\ot N}^{\prime}\co (S\ot R\ot \delta_{H}) $
\item[ ]$= (S\ot c_{M,R}\ot c_{H,N})\co (S\ot M\ot c_{H,R}\ot N)\co (((S\ot \rho_{M})\co h_{M}^{-1})\ot
h_{N}^{-1})\co (S\ot c_{H,R}\ot H)\co (S\ot R\ot \delta_{H})  $
\item[ ]$= (S\ot c_{M,R}\ot c_{H,N})\co (S\ot M\ot c_{H,R}\ot N)\co
((( h_{M}^{-1}\ot H)\co (S\ot \delta_{H}))\ot h_{N}^{-1})\co (S\ot
c_{H,R}\ot H)\co (S\ot R\ot \delta_{H})  $
\item[ ]$= (g_{M\ot N}^{\prime}\ot H)\co (S\ot R\ot ((H\ot \delta_{H})\co \delta_{H}))  $
\item[ ]$=(S\ot R\ot \rho_{M\ot N}^{2})\co g_{M\ot N}^{\prime}\co (S\ot R\ot \delta_{H})   $
\end{itemize}
As a consequence, there exists a unique morphism $h_{M\bullet
N}^{\prime}: T\ot H\rightarrow T\ot M\bullet N $ such that
\begin{equation}
\label{product-geo-nor-bas-2} (T\ot i_{M\bullet N})\co h_{M\bullet
N}^{\prime}=g_{M\ot N}^{\prime}\co (T\ot \delta_{H}).
\end{equation}

Thus, by (\ref{product-geo-nor-bas-1}) and
(\ref{product-geo-nor-bas-2})
$$h_{M\bullet
N}\co h_{M\bullet N}^{\prime}= (T\ot ((\varepsilon_{H}\ot H)\co
\delta_{H}))\co h_{M\bullet N}\co h_{M\bullet N}^{\prime}=(T\ot
\varepsilon_{H}\ot H)\co g_{M\ot N}\co (T\ot i_{M\bullet N})\co
h_{M\bullet N}^{\prime}$$ $$=(T\ot \varepsilon_{H}\ot H)\co g_{M\ot
N}\co g_{M\ot N}^{\prime}\co (T\ot \delta_{H})=id_{T\ot H}$$ and
$$
(T\ot i_{M\bullet N})\co h_{M\bullet N}^{\prime}\co h_{M\bullet N}
=g_{M\ot N}^{\prime}\co (T\ot \delta_{H})\co h_{M\bullet N}= g_{M\ot
N}^{\prime}\co g_{M\ot N}\co (T\ot i_{M\bullet N})=T\ot i_{M\bullet
N}$$ and then $h_{M\bullet N}$ is an isomorphism with inverse
$h_{M\bullet N}^{-1}=h_{M\bullet N}^{\prime}$.

The morphism $h_{M\bullet N}$ is a morphism of right $H$-comodules
because
\begin{itemize}
\item[ ]$\hspace{0.38cm} (h_{M\bullet N}\ot H)\co (T\ot \rho_{M\bullet N}) $
\item[ ]$= (T\ot ((H\ot \varepsilon_{H})\co \delta_{H})\ot H)\co
(h_{M\bullet N}\ot H)\co (T\ot \rho_{M\bullet N})   $
\item[ ]$= (T\ot H\ot \varepsilon_{H}\ot H)\co (g_{M\ot N}\ot H)\co
(T\ot ((i_{M\bullet N}\ot H)\co \rho_{M\bullet N}))  $
\item[ ]$=(T\ot H\ot \varepsilon_{H}\ot H)\co (g_{M\ot N}\ot H)\co
(T\ot ((M\ot \rho_{N})\co i_{M\bullet N}))   $
\item[ ]$= (T\ot H\ot ((H\ot \varepsilon_{H})\co \delta_{H}))\co g_{M\ot N}\co (T\ot i_{M\bullet N})  $
\item[ ]$=(T\ot \delta_{H})\co h_{M\bullet N}   $
\end{itemize}
where the first equality follows by the counit property, the second
and the last ones by (\ref{product-geo-nor-bas-1}), the third one
the properties of $\rho_{M\bullet N}$ and the fourth one by the
comodule condition for $h_{N}$.

Finally, we will  prove that $h_{M\bullet N}$ is almost lineal.
Indeed:
\begin{itemize}
\item[ ]$\hspace{0.38cm} (\mu_{T}\ot H)\co (T\ot (h_{M\bullet N}\co (\eta_{T}\ot M\bullet N))) $
\item[ ]$=(\mu_{T}\ot ((\varepsilon_{H}\ot H)\co \delta_{H}))\co (T\ot (h_{M\bullet N}\co
(\eta_{T}\ot M\bullet N)))    $
\item[ ]$= (\mu_{S\ot R}\ot \varepsilon_{H}\ot H)\co (S\ot R\ot (g_{M\ot N}\co (\eta_{S}\ot \eta_{R}\ot
i_{M\bullet N})))  $
\item[ ]$=  (S\ot \varepsilon_{H}\ot R\ot H)\co (((\mu_{S}\ot H)\co (S\ot (h_{M}\co (\eta_{S}\ot M))))\ot
((\mu_{R}\ot H)\co (R\ot (h_{N}\co (\eta_{R}\ot N)))))$
\item[ ]$\hspace{0.38cm} \co (S\ot c_{R,M}\ot N)\co (S\ot
R\ot i_{M\bullet N}) $
\item[ ]$=(T\ot \varepsilon_{H}\ot  H)\co g_{M\ot N}\co (T\ot i_{M\bullet N})  $
\item[ ]$= (T\ot ((\varepsilon_{H}\ot H)\co \delta_{H}))\co h_{M\bullet N} $
\item[ ]$=h_{M\bullet N}   $
\end{itemize}
In the last equalities, the first  and the sixth ones follow by the
properties of the counit, the second and the fifth ones by
(\ref{product-geo-nor-bas-1}), the third one is a consequence of the naturality of
$c$ and the fourth one relies on the almost lineal condition for $h_{M}$
and $h_{N}$.
\end{proof}

As a direct consequence of this proposition we have the
following theorem.

\begin{teo}
\label{main-2} Let $H$ be a cocommutative faithfully flat Hopf
quasigroup. If we denote by $P_{gnb}(K,H)$ the category whose
objects are the invertible right $H$-comodules with geometric normal
basis and whose morphisms are the morphisms of right $H$-comodules
between them, $P_{gnb}(K,H)$ with the product defined in the previous
proposition is a symmetric monoidal category where the unit object
is $\mathbf{H}$ and the symmetry isomorphisms, the left, right an
associative constraints are defined as in Proposition
\ref{monoidal-magmas}. Moreover, the set of isomorphism classes in
$P_{gnb}(K,H)$ is a monoid that we will denote by $Pic_{gnb}(K,H)$.
\end{teo}

\begin{rem}
\label{fin-1} {\rm There is a  monoid morphism $\omega:
Gal_{{\mathcal C}}(H)\rightarrow P_{gnb}(K,H)$ defined by
$\omega([{\Bbb A}])=[\mathbf{A}]$. If $\omega([{\Bbb
A}])=[\mathbf{H}]$ we have that $A\cong H$ as right $H$-comodules.
Then $[{\Bbb A}]\in N_{{\mathcal C}}(H)$. Also, if $[{\Bbb A}]\in
Gal^s_{{\mathcal C}}(H)$ and $\omega([{\Bbb A}])=[\mathbf{H}]$,
$[{\Bbb A}]\in N^s_{{\mathcal C}}(H)$.

}
\end{rem}

\section*{Acknowledgements}
The authors  were supported by  Ministerio de Econom\'{\i}a y Competitividad (Spain) and by Feder founds. Project MTM2013-43687-P: Homolog\'{\i}a, homotop\'{\i}a e invariantes categ\'oricos en grupos y \'algebras no asociativas.

\end{document}